\newtheorem{theorem}{Theorem}[section]
\newtheorem{lemma}[theorem]{Lemma}
\newtheorem{proposition}[theorem]{Proposition}
\newtheorem{corollary}[theorem]{Corollary}
\newtheorem{assumption}[theorem]{Assumption}
\newtheorem{question}[theorem]{Question}
\theoremstyle{definition}
\newtheorem{definition}[theorem]{Definition}
\newtheorem{remark}[theorem]{Remark}
\newtheorem{example}[theorem]{Example}
\newcommand{\op}[1]{\operatorname{#1}}
\newcommand{\newterm}{\textsf}
\newcommand{\dbcoh}[1]{\operatorname{D}^{\operatorname{b}}(\operatorname{coh }#1)}
\newcommand{\dabsfact}[1]{\operatorname{D}^{\operatorname{abs}}[#1]}
\newcommand{\dabs}[1]{\op{D}^{\op{abs}}[#1]}
\newcommand{\E}{\mathcal{E}}
\newcommand{\cone}{\operatorname{Cone}}
\newcommand{\coker}{\operatorname{coker}}
\newcommand{\Hom}{\operatorname{Hom}}
\newcommand{\conv}{\operatorname{Conv}}
\def\N{\op{\mathbb{N}}}
\def\Z{\op{\mathbb{Z}}}
\def\C{\op{\mathbb{C}}}
\def\R{\op{\mathbb{R}}}
\def\Q{\op{\mathbb{Q}}}
\def\O{\op{\mathcal{O}}}
\def\A{\op{\mathbb{A}}}
\def\P{\op{\mathbb{P}}}
\def\tot{\operatorname{tot}}
\def\E{\op{\mathcal{E}}}
\def\Hom{\op{\mathrm{Hom}}}
\newcommand{\Fact}[1]{\operatorname{Fact }(#1)}
\newcommand{\fact}[1]{\operatorname{fact }(#1)}
\def\Gm{\op{\mathbb{G}_m}}
\def\tot{\op{tot}}
\newtheorem{notation}[theorem]{Notation}
\newcommand{\m}{\mathfrak{m}}
\title[Exoflops]{Toric Exoflops and Categorical Resolutions}
\author[Kelly]{Tyler L. Kelly}
\address{
  \begin{tabular}{l}
   Tyler L. Kelly \\
   \hspace{.1in} School of Mathematical Sciences, Queen Mary University of London, \\ \hspace{.1in} 327 Mile End Rd, London E1 4NS, United Kingdom \\
   \hspace{.1in} Email: {\bf t.l.kelly@qmul.ac.uk} \\
  \end{tabular}
}
\author[Malter]{Aimeric Malter}
\address{
  \begin{tabular}{l}
   Aimeric Malter \\
   \hspace{.1in} Beijing Institute of Mathematical Sciences and Applications, A3, Room 4-101, \\
   \hspace{.1in}  No. 544, Hefangkou Village, Huaibei Town, Huairou District, Beijing 101408, China\\
   \hspace{.1in} Email: {\bf aimericmalter@bimsa.cn} \\
  \end{tabular}
}
\numberwithin{equation}{section}
\begin{document}

\begin{abstract}
An exoflop takes a gauged Landau-Ginzburg (LG) model, partially compactifies it, and then performs certain birational transformations on it. When certain criteria hold, this can provide a crepant categorical resolution or equivalence of derived categories associated to the gauged LG models. We provide sufficient criteria for when this provides categorical resolutions for (or equivalences between) certain complete intersections in toric stacks. 
\end{abstract}

\maketitle
\setcounter{tocdepth}{1}
\tableofcontents

\section{Introduction}

Aspinwall proposed the exoflop as a tool for finding categorical resolutions of singular varieties via their corresponding gauged Landau-Ginzburg models  \cite{Aspinwall}. In this work, he computed categorical resolutions of certain singular K3 surfaces in toric varieties to demonstrate their power in this context. In the present paper, we formalize and generalize this usage of exoflops, yielding a dynamic web of related derived categories.

Let $\C$ be an algebraically closed field of characteristic zero. Let $X$ be a smooth variety over $\C$ and $G$ an affine algebraic group that acts on $X$. Take $W$ to be a $G$-invariant section of an invertible $G$-equivariant sheaf $\mathcal{L}$, that is, $W \in \Gamma(X, \mathcal{L})^G$. We call the data $(X, G, W)$ a \newterm{gauged Landau-Ginzburg (LG) model}. There is a derived category called the (matrix) factorization category  $\dabsfact{X,G,W}$ associated to the gauged Landau-Ginzburg model.  To any complete intersection, there is a corresponding gauged Landau-Ginzburg model associated to it \cite{Orlov2, Isik, Shipman, Hirano}. Relating complete intersections to their corresponding gauged Landau-Ginzburg models has had applications for both derived categories and enumerative geometry.

Geometrically, an exoflop can be viewed as having two steps: 
\begin{itemize}
\item[\underline{exo}:] partially compactifying the space $X$ in a gauged LG model while extending the group action $G$ and global function $W$, and 
\item[\underline{flop}:] a birational transformation of the partial compactification of $X$ determined by varying the stability parameter of a prescribed geometric invariant theory quotient.
\end{itemize} 

Exoflops are not a new concept. Over three decades ago, Aspinwall, Greene, and Morrison examined the multiple mirror phenomenon using toric variations of geometric invariant theory \cite{AGMexo}. Here, they studied the secondary fan of $\op{tot} K_{\P^4 / \Z_5^3}$, which is related to the gauged LG model corresponding to the mirror quintic. There, they discovered chambers of the secondary fan that are partial compactifications of line bundles over distinct toric varieties. This testifies to the need of formal results in the context of modern mirror symmetry statements, such as Kontsevich's Homological Mirror Symmetry ~\cite{KontsevichICM}. 

Exoflops were used to show derived equivalences between Calabi-Yau varieties that were BHK mirror to two Calabi-Yau hypersurfaces in the same toric variety \cite{FK19}. Such results were then expanded to understanding Calabi-Yau hypersurfaces in toric varieties \cite{DFK}. Their usage to understand categorical resolutions of Kuznetsov components has been shown in examples in (unpublished) work of Aspinwall, Addington, and Segal that has been outlined in \cite{AAS}, as well as in \cite{FK18}. 

Lastly, they have been studied for toric complete intersections \cite{Aspinwall, Malter}; however, results were only established in examples. These results for complete intersections required involved computations for each example, which motivated finding a new approach.  Using the framework of categorical resolutions avoids these calculations for complete intersections. We thus obtain a more general result for complete intersections in toric Fano varieties. Theorem~\ref{main theorem} below proves sufficient criteria for when a Calabi-Yau complete intersection is derived equivalent to a Calabi-Yau variety with a Batyrev-Borisov mirror. Skip to \textsection\ref{Precise} for precise statements. However, this paper also offers results to readers wanting to relate derived categories of complete intersections in toric varieties, so we first give an intuition to  the results found along the way.

\subsection{How the exoflop works.} Here is a rough overview of how this operation changes the factorization category.  Start with a complete intersection $\mathcal{Z}$ (inside a projective (Fano) toric variety $Y$), and take its corresponding gauged LG model $(X,G,W)$. The gauged LG model is found by taking the vector bundle $\mathcal{E}$, where $\mathcal{Z} = Z(s)$ for some $s \in \Gamma(Y,\mathcal{E})$, writing $[X/G] = \op{tot}\mathcal{E}^\vee$ and $W$ is found by pairing with the section $s$. Partially compactify by finding an open immersion $[X/G] \hookrightarrow [\bar X/\bar G]$ where $W$ can be extended to a $\bar G$-invariant global function $\bar W:\bar X\to \mathbb{A}^1$. When the original complete intersection is not smooth, one can see that the critical locus is not proper. Choosing a good partial compactification can remedy this and yield a crepant categorical resolution (\cite[Theorem 3.7]{FK18}, restated as Theorem~\ref{partial compact CCR} here). This is made explicit in Aspinwall's example, which we treat in \textsection\ref{subsec:Aspinwall}. 

Next, we observe that if one starts with a smooth complete intersection $\mathcal{Z}$, then we have a fully faithful functor $F: \dbcoh{\mathcal{Z}} \to \dabs{\bar X, \bar G, \bar W}$ (Corollary~\ref{cor CCR 1}). As a direct corollary, if $\dabs{\bar X, \bar G, \bar W}$ is Calabi-Yau, then $F$ is an equivalence (Corollary~\ref{cor: CCR equiv}). These corollaries are key observations introduced in this paper. Previously, Orlov proved that if one partially compactifies $X$ while not extending the critical locus of $W$, then the corresponding factorization category is equivalent to the original one \cite[Proposition 1.14]{Orlov04}. This hypothesis manifests as a computationally heavy problem in proving certain containment of ideals to obtain an equivalence (see e.g., \cite[Lemma 5.13]{FK19}, \cite[Lemma 4.5]{Malter}). This became a source of technical difficulty for general results, but our methods sidestep this issue through using crepant categorical resolutions and extend techniques from the hypersurface case in \cite{DFK, FK19} to complete intersections.

Next, one uses variations of geometric invariant theory on the gauged LG model  $(\bar X, \bar G, \bar W)$. From the above viewpoint, this can be optional to find partial compactifications or equivalences; however, after a birational modification one may find a new gauged LG model $(X', G', W')$ that is a more natural LG model (e.g., a toric vector bundle) and proving certain properties about the category can be more straightforward. Varying geometric invariant theory quotients (VGIT) and its ramifications on the factorization category for a gauged Landau-Ginzburg model has been established by Ballard-Favero-Katzarkov and Halpern-Leistner \cite{BFK, HL}. By first partially compactifying, one obtains more relations to other factorization categories of gauged Landau-Ginzburg models and more geometric invariant theory problems. A key part to using this technique is finding the right context and partial compactification. We provide a result in convex geometry that in turn allows us to identify ways to ensure our partial compactification is a GIT quotient and allows us to use techniques from VGIT (Lemma~\ref{ExtendRegtriang} / Corollary~\ref{reg triang}).

Lastly, we note when one does this carefully, one can sometimes find that the new gauged Landau-Ginzburg model corresponds to a different complete intersection $\mathcal{Z}'$, and one can establish relations between $\mathcal{Z}$ and $\mathcal{Z}'$. Roughly, the sequence of relations is the following:
\begin{equation}\begin{tikzcd}
\mathcal{Z} \arrow[r, "LG/CI"] & (X, G, W) \arrow[l] \arrow[d, shift left, "exo"]\\
& (\bar X, \bar G, \bar W) \arrow[u, shift left] \arrow[d, "flop"] \\
\mathcal{Z}' \arrow[r, "LG/CI"] & (X', G', W') \arrow[l] \arrow[u]
\end{tikzcd}
\end{equation}

Aspinwall's original paper aimed to establish the use of exoflops for K3 surfaces in toric varieties. 
The results in Sections~\ref{sec: exo} and~\ref{sec: flop} work more generally. We summarise the most general versions of results \textsection\ref{subsec:summary}; however, there is no uniform theorem outside the CICY case. The cleanest results hold for Calabi-Yau complete intersections whose defining polynomials are generic with respect to special linear systems corresponding to completely split reflexive Gorenstein cones.

\subsection{Precise results}\label{Precise}

Let $M$ and $N$ be dual lattices. Suppose $X_{\Sigma}$ is a toric projective $\Q$-Fano variety with fan $\Sigma$ in $N_{\R}$ and assume the fan $\Sigma$ is  simplicial. We recall that each ray $\rho \in \Sigma(1)$ corresponds to a torus-invariant Weil divisor $D_\rho$ and $\sum_{\rho\in\Sigma(1)} D_\rho = -K_{X_{\Sigma}}$. We can make a Calabi-Yau complete intersection in $X_{\Sigma}$ as follows. Take $D_1, \dots, D_r$ to be torus-invariant Weil divisors that partition $-K_{X_{\Sigma}}$. That is, let $R_1 \sqcup \dots \sqcup R_r = \Sigma(1)$ and take
$$
D_i = \sum_{\rho \in R_i} D_\rho.
$$

Note that there are polytopes $P_{D_i}$ whose lattice points correspond to a basis of the vector space of global sections $\Gamma(X_{\Sigma}, \mathcal{O}_{X_{\Sigma}}(D_i))$. Take global sections $f_i\in\Gamma(X_{\Sigma}, \mathcal{O}_{X_{\Sigma}}(D_i))$ and write it as $f_i = \sum_{m \in P_{D_i}} c_m x^m$. Then consider the complete intersection $\mathcal{Z} = Z(f_1, \dots, f_r) \subseteq \mathcal{X}_{\Sigma}$ where $\mathcal{X}_{\Sigma}$ is the toric stack associated to the fan $\Sigma$.

Consider the toric fan $\Sigma_{-D_1, \dots, -D_r}$ where $X_{\Sigma_{-D_1, \dots, -D_r}}$ is the total space $\op{tot}(\oplus_{i=1}^r \mathcal{O}_{X_{\Sigma}}(-D_i))$ (for an explicit construction, see \textsection\ref{subsec:toric vector bundles}).  The cone $\sigma= |\Sigma_{-D_1, \dots, -D_r}|$ that supports the fan $\Sigma_{-D_1, \dots, -D_r}$ is strictly convex. Its dual cone can be written as 
$$
\sigma^\vee = \operatorname{Cone}(P_{D_1} * \cdots *P_{D_r}) \subseteq N_{\R} \times \R^r,
$$
where $P_{D_1} * \cdots *P_{D_r}$ is a Cayley polytope given by taking the convex hull of $P_{D_i} + e_i$, where $e_i$ are the standard elementary basis vectors for $\R^r$. We can make a subpolytope by taking $$\Xi_i := \{ m \in P_{D_i} \ | \ \text{ the coefficient $c_m$ for the monomial $x^m$ in $f_i$ is nonzero}\}.$$
Take the convex hull  $P_i = \op{Conv}(\Xi_i)$. Note $P_i \subseteq P_{D_i}+e_i$, and to get a proper partial compactification in the exoflop we need this containment to be \emph{strict}. This means we will be considering a \emph{special} linear system of  $D_i$.  Construct the cone $\sigma_W := \operatorname{Cone}( P_1* \cdots *P_{r})$ and  
consider its the dual cone, $\sigma_W^\vee$. The choice of global sections $f_i$ completely determines this cone, and under appropriate circumstances the choice we make allows us to write $\sigma_W^\vee$ as support of a toric variety that is itself a rank $r$ vector bundle $\bigoplus_{i=1}^r\O_{X_{\Sigma'}}(D_i)$ over another complete toric stack $\mathcal{X}_{\Sigma'}$. In this case, we can construct a new Calabi-Yau complete intersection $\mathcal{Z}'\subset\mathcal{X}_{\Sigma'}$. In fact, one notes that the construction of the complete intersection remains valid for any cone $\sigma'$ lying between $\sigma_W$ and $\sigma^\vee$ whose dual corresponds to the support of an appropriate rank $r$ vector bundle, which is the content of the key Assumption~\ref{conical assumption}. We also assume that the Calabi-Yau orbifolds $ \mathcal Z, \mathcal Z'$ are positive dimensional.

\begin{theorem}[Corollary \ref{2 CICYs}] Suppose Assumption~\ref{conical assumption} holds and take $\mathcal Z, \mathcal Z'$ defined above.
\begin{itemize}
\item[(i)] If $\mathcal Z'$ is smooth, then we have a crepant categorical resolution 
\begin{align*}
F&:\dbcoh{\mathcal Z'} \to \dbcoh{\mathcal Z},   \\
G&:\op{Perf} \mathcal Z \to \dbcoh{\mathcal Z'}.
\end{align*}
\item[(ii)] If both $\mathcal Z$ and $\mathcal Z'$ are smooth, then they are derived equivalent.
\end{itemize}
\end{theorem}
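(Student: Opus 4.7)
The plan is to instantiate the exoflop diagram of the introduction. By the LG/CI correspondence \cite{Isik,Shipman,Hirano}, $\mathcal{Z}$ corresponds to the gauged LG model $(X,G,W)$ where $X = \op{tot}(\bigoplus_i \mathcal{O}_{\mathcal{X}_\Sigma}(-D_i))$ and $W = \sum_i y_i f_i$ with $y_i$ fibre coordinates. Because only monomials in $\Xi_i \subseteq P_{D_i}$ actually appear in $f_i$, the potential extends to $\bar W$ on the partial compactification $(\bar X, \bar G, \bar W)$ whose toric data is dual to the subcone $\sigma_W \subsetneq \sigma^\vee$; this is the \emph{exo} step. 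The hypothesis that $\sigma_W^\vee$ supports a second vector-bundle structure $\bigoplus_i \mathcal{O}_{\mathcal{X}_{\Sigma'}}(-D_i) \to \mathcal{X}_{\Sigma'}$ then presents $\bar X$ as admitting a second GIT chamber whose associated LG model $(X',G',W')$ is, by construction, the one that LG/CI attaches to $\mathcal{Z}' \subseteq \mathcal{X}_{\Sigma'}$. That both chambers arise from the same partial compactification is guaranteed by Corollary~\ref{reg triang}, the convex-geometric statement advertised in the introduction.

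For part (i), since $\mathcal{Z}'$ is smooth, Corollary~\ref{cor CCR 1} yields a fully faithful embedding $\iota' : \dbcoh{\mathcal{Z}'} \hookrightarrow \dabs{\bar X, \bar G, \bar W}$. Theorem~\ref{partial compact CCR} simultaneously presents $\dabs{\bar X, \bar G, \bar W}$ as a crepant categorical resolution of $\dabs{X,G,W} \simeq \dbcoh{\mathcal{Z}}$, supplying a comparison functor $\pi$ and a left adjoint. Combining $\iota'$ with this adjunction pair in the standard way produces the two functors $F : \dbcoh{\mathcal{Z}'} \to \dbcoh{\mathcal{Z}}$ and $G : \op{Perf}\mathcal{Z} \to \dbcoh{\mathcal{Z}'}$ required. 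For part (ii), with both $\mathcal{Z}$ and $\mathcal{Z}'$ smooth, the Calabi-Yau condition $\sum_i D_i = -K_{X_\Sigma}$ propagates through LG/CI to make $\dabs{\bar X, \bar G, \bar W}$ a Calabi-Yau category, so Corollary~\ref{cor: CCR equiv} upgrades both fully faithful embeddings to equivalences $\dbcoh{\mathcal{Z}} \simeq \dabs{\bar X, \bar G, \bar W} \simeq \dbcoh{\mathcal{Z}'}$, and composition delivers the derived equivalence.

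The main obstacle is the first step: realizing $(X,G,W)$ and $(X',G',W')$ as two GIT chambers of a common partial compactification $(\bar X, \bar G, \bar W)$ with $\bar W$ a genuine extension of $W$. This is a combinatorial question about regular triangulations of $\sigma_W$, exactly what Corollary~\ref{reg triang} and Assumption~\ref{conical assumption} are tailor-made to handle; once these are in place, the VGIT machinery of \cite{BFK,HL} applies. Verifying the properness and crepancy hypotheses of Theorem~\ref{partial compact CCR} in the toric-stacky VGIT setup then amounts to a Kempf-Ness stratum analysis along the lines of the hypersurface arguments of \cite{DFK,FK19}.
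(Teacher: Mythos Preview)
Your overall architecture is right, but there is a gap in how you link $\mathcal Z'$ to the partial compactification $(\bar X,\bar G,\bar W)$. You invoke Corollary~\ref{cor CCR 1} to produce a fully faithful $\iota':\dbcoh{\mathcal Z'}\hookrightarrow\dabs{\bar X,\bar G,\bar W}$, but that corollary concerns the complete intersection whose LG model is being \emph{partially compactified}---here that is $[\mathcal Z/G]$, not $[\mathcal Z'/G']$. The stack $[\mathcal Z'/G']$ is connected to $(\bar X,\bar G,\bar W)$ through VGIT wall-crossing (Theorem~\ref{Thm:5.8inFK18}) followed by regeometrization (Proposition~\ref{prop: regeometrize}), not through an open immersion, so neither Corollary~\ref{cor CCR 1} nor Corollary~\ref{cor: CCR equiv} applies on that side. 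The point you are missing is that Assumption~\ref{conical assumption} forces every primitive ray generator of $\Psi$ to pair to $1$ with $\mathfrak m$, so $\nu_{\ne 1}=\varnothing$ and we land in case~(c) of Theorem~\ref{Thm:5.8inFK18}: the VGIT functor is already an \emph{equivalence} $\dabs{U_\Psi,G_\Psi\times\Gm,\bar W}\cong\dbcoh{[\mathcal Z'/G']}$.

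That single equivalence does all the heavy lifting. When $[\mathcal Z'/G']$ is smooth it transports homological smoothness and properness to $\dabs{U_\Psi,G_\Psi\times\Gm,\bar W}$, so the hypothesis of Theorem~\ref{partial compact CCR} is met directly---no separate Kempf--Ness stratum analysis is needed, and part~(i) follows. For part~(ii) the same equivalence transports the Calabi--Yau property of $\dbcoh{[\mathcal Z'/G']}$ (obtained from \cite[Corollary 5.15]{FK18}, again using that all rays pair to $1$ with $\mathfrak m$) to $\dabs{U_\Psi,G_\Psi\times\Gm,\bar W}$, so Corollary~\ref{cor: CCR equiv} applies on the $\mathcal Z$ side alone. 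The whole proof thus reduces to invoking Corollary~\ref{cor: exoflop}(a) and (c), once one checks that Assumption~\ref{conical assumption} places us in the equivalence case of the VGIT theorem.
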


We then prove the following result that gives combinatorial sufficient criteria for obtaining a (geometrically realizable) crepant categorical resolution for the complete intersection $\mathcal Z$.

\begin{theorem}[Corollary~\ref{Cor:csrG gives cat resn}]\label{main theorem}
If the cone $\sigma_W$ and its dual are completely split reflexive Gorenstein cones, and the coefficients $c_m$ are generic, then there is an explicitly constructed Calabi-Yau complete intersection $\mathcal{Z}'$ in a toric Fano variety and a crepant categorical resolution of $\mathcal{Z}$ given by two functors 
\begin{align*}
F&:\dbcoh{\mathcal Z'} \to \dbcoh{\mathcal Z},   \\
G&:\op{Perf} \mathcal Z \to \dbcoh{\mathcal Z'}.
\end{align*}
Moreover, if $\mathcal{Z}$ is smooth, then it is a derived equivalence.
\end{theorem}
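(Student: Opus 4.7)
The plan is to derive Theorem~\ref{main theorem} as a direct application of Corollary~\ref{2 CICYs}, so the task reduces to showing that the completely split reflexive Gorenstein (csrG) hypothesis on the pair $(\sigma_W, \sigma_W^\vee)$, together with the genericity of the coefficients $c_m$, produces all the input data that corollary requires. Both sides of the exoflop diagram play symmetric roles here, which is the structural reason the conclusion looks the same for $\mathcal Z$ and $\mathcal Z'$.

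First I would unpack the csrG assumption combinatorially. That $\sigma_W = \cone(P_1 * \cdots * P_r)$ is reflexive Gorenstein says its primitive ray generators lie on a common affine hyperplane at height one, which is the Calabi-Yau condition for $\mathcal Z$. That $\sigma_W^\vee$ is also reflexive Gorenstein and completely split means its generators lie on a height-one hyperplane and further decompose into $r$ polytopes lying in parallel affine slices. This split structure on the dual side is exactly what identifies $\sigma_W^\vee$ as the support of the fan of $\tot(\oplus_{i=1}^r \mathcal O_{\mathcal X_{\Sigma'}}(D_i'))$ over a complete toric stack $\mathcal X_{\Sigma'}$, while the reflexive Gorenstein property on that side forces $\mathcal X_{\Sigma'}$ to be Fano with $-K_{\mathcal X_{\Sigma'}} = \sum_i D_i'$. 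A generic section therefore cuts out the Calabi-Yau complete intersection $\mathcal Z'\subseteq \mathcal X_{\Sigma'}$ which is the ``explicit'' $\mathcal Z'$ appearing in the statement and in Corollary~\ref{2 CICYs}.

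Second, the exo- and flop-steps have to be arranged as a genuine GIT problem. I would invoke Corollary~\ref{reg triang}, which under csrG produces a regular triangulation of the Cayley polytope $P_1 * \cdots * P_r$ extending any prescribed partial triangulation, so that the partial compactification $[\bar X/\bar G]$ is realized as a GIT quotient and the VGIT machinery of \cite{BFK, HL} applies. In the same step one verifies Assumption~\ref{conical assumption}. The genericity hypothesis on the $c_m$ then gives transversality on both sides, so that the flopped LG model is the one associated to a \emph{smooth} $\mathcal Z'$. Corollary~\ref{2 CICYs}(i) now yields the crepant categorical resolution $(F,G)$; when $\mathcal Z$ is itself smooth, Corollary~\ref{2 CICYs}(ii) upgrades this to a derived equivalence.

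The principal obstacle I anticipate is in the first step: ensuring that csrG for both $\sigma_W$ and $\sigma_W^\vee$ really is the precise combinatorial condition identifying $\sigma_W^\vee$ with the fan of a toric vector bundle over a Fano base whose line bundle summands add up to the anticanonical class. Morally this is Borisov's correspondence for nef partitions, but one must verify that the partition coming from the Cayley sum on the $\sigma_W$ side and the one coming from the split structure on the dual side are genuinely dual nef partitions, that the stacky data on both sides match so that Corollary~\ref{2 CICYs} applies verbatim, and that genericity of the $c_m$ propagates through the exoflop to give transversality for the flopped model. Once these compatibilities are nailed down, the proof is a formal assembly of the results developed earlier in the paper.
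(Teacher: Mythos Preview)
Your plan is essentially the paper's own argument: verify Assumption~\ref{conical assumption} from the csrG hypothesis (this is Lemma~\ref{Lem:sigmaW dual csrG gives Assumption}, which in turn rests on the Batyrev--Nill nef-partition correspondence you allude to as ``Borisov's correspondence''), then feed into Corollary~\ref{2 CICYs}. Two small points worth tightening: the paper's mechanism for smoothness of $\mathcal Z'$ is not a vague ``transversality'' but the specific chain \emph{nef partition $\Rightarrow$ basepoint-free divisors $\Rightarrow$ Bertini}, and for this Bertini argument to land inside the family $\mathcal F_{\Xi_W}$ one needs the saturation hypothesis on $\Xi_W$, which the formal statement of Corollary~\ref{Cor:csrG gives cat resn} includes but your sketch omits.
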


The Calabi-Yau complete intersections $\mathcal{Z'}$ used in the above theorem have a special place in mirror symmetry. They are precisely those that have mirrors that can be constructed using the Batyrev-Borisov mirror construction \cite{BB2, BB1}. This provides a way to take a smooth Calabi-Yau complete intersection $\mathcal Z$ that fits into this criterion and find its mirror $A$-model, as its mirror $A$-model should be the same as that of $\mathcal Z$ by Theorem~\ref{main theorem}, in light of Kontsevich's Homological Mirror Symmetry Conjecture \cite{KontsevichICM}. We end the paper with several examples\textemdash some relating to this mirror symmetry viewpoint\textemdash with a look to generalizing mirror constructions involving special linear systems defining Calabi-Yau complete intersections in toric varieties.

\subsection*{Acknowledgments}
We thank David Favero heartily for discussions and previous work on related topics. We also thank Nick Addington, Alessandro Chiodo, Cyril Closset, Luigi Martinelli, and Ed Segal for discussions relating to this project. We also thank the referee for their comments that improved the paper. The first author acknowledges support from EPSRC Grant EP/S03062X/1 and a UK Research and Innovation Future Leaders Fellowship MR/T01783X/1,2.  They also thank the Fondation Sciences Mathématiques de Paris for support and the Institut de Math\'ematiques de Jussieu for their hospitality where portions of this paper were written. The second author was supported by the EPSRC grants EP/L016516/1, Postdoctoral Fellowships for Research in Japan (Short-term (PE), Fellowship ID: PE23724), the Beijing Natural Science Foundation International Scientist Program IS25013 and the Beijing Postdoctoral Research Foundation.

\section{Background on factorization categories}

In this section we outline relevant details for the derived category associated to a gauged Landau-Ginzburg model, the factorization category. We aim to suppress technicalities that may otherwise distract from the main narrative of the paper, but provide references.

\subsection{Gauged Landau-Ginzburg models and their factorizations}

Let $\C$ be an algebraically closed field of characteristic zero. Let $X$ be a smooth variety over $\C$ and $G$ an affine algebraic group that acts on $X$. Take $W$ to be a $G$-invariant section of an invertible $G$-equivariant sheaf $\mathcal{L}$, that is, $W \in \Gamma(X, \mathcal{L})^G$. We call the data $(X, G, W)$ a \newterm{gauged Landau-Ginzburg (LG) model}. To a gauged LG model, there is the absolute derived category $\dabsfact{X,G,W}$ associated to it, which is the analogue of a variety's bounded derived category of coherent sheaves.

We roughly outline its definition as follows. Good resources include \cite{BFKMF, Hirano}. 

\begin{definition}
A \newterm{factorization} is the data $\E = (\E_{0}, \E_1, \phi^{\E}_0, \phi_1^{\E})$ where $\E_0$ and $\E_1$ are $G$-equivariant quasi-coherent sheaves and 
$$
\E_0 \stackrel{\phi_0^{\E}}{\longrightarrow} \E_1 \stackrel{\phi_1^{\E}}{\longrightarrow} \E_0 \otimes_{\O_X} \mathcal{L}
$$
are morphisms such that $\phi^{\E}_1 \circ \phi^{\E}_0 = w$ and $(\phi_0^{\E} \otimes \mathcal{L}) \circ \phi_{1}^{\E} = w$.
\end{definition}
For two factorizations $\E$ and $\mathcal{F}$, there is a complex $\Hom(\E, \mathcal{F})$ of morphisms from $\E$ to $\mathcal{F}$ defined as follows. We have the graded vector space
$$
\Hom(\E, \mathcal{F})^\bullet := \bigoplus_{n\in \Z} \Hom(\E, \mathcal{F})^n
$$
with differential $d^i:  \Hom(\E, \mathcal{F})^i \to  \Hom(\E, \mathcal{F})^{i+1}$ given by $d^i(f) = \phi^{\mathcal{F}}_{\star + i} \circ f - (-1)^i f \circ \phi^{\E}_{\star}$
where
\begin{equation*}\begin{aligned}
\Hom(\E, \mathcal{F})^{2m} &:= \Hom(\E_1, \mathcal{F}_1\otimes \mathcal{L}^{\otimes m}) \oplus \Hom(\E_0, \mathcal{F}_0\otimes \mathcal{L}^{\otimes m}) \\
\Hom(\E, \mathcal{F})^{2m+1} &:= \Hom(\E_1, \mathcal{F}_0\otimes \mathcal{L}^{\otimes m}) \oplus \Hom(\E_0, \mathcal{F}_1\otimes \mathcal{L}^{\otimes m+1}) \\
\end{aligned}\end{equation*}
This yields a dg category $\Fact{X,G,W}$. Denote by $\fact{X,G,W}$ the full dg-subcategory of this dg category whose components are coherent. 

This category has a subcategory of acyclic complexes. Given $\Fact{X,G,W}$, consider the subcategory $Z^0\Fact{X,G,W}$ with the same objects but only degree zero morphisms. Given a complex of objects in $Z^0\Fact{X,G,W}$, one can construct a new object $\mathcal{T} \in \Fact{X,G,W}$ in a natural way by taking direct sums and arranging the morphisms in a natural way (see (2.1) of \cite{FK18} for details). Take $\operatorname{Acyc}(X,G,W)$ to be the full subcategory of $\Fact{X,G,W}$ consisting of all totalizations of bounded exact complexes in $Z^0\Fact{X, G, W}$ and let $\operatorname{acyc}(X,G,W) = \operatorname{Acyc}(X,G,W) \cap \fact{X,G,W}$. 
\begin{definition}
The \newterm{absolute derived category} $\dabs{X,G,W}$ is the idempotent completion of the Verdier quotient of $\fact{X,G,W}$ by $\operatorname{acyc}(X,G,W)$. 
\end{definition}

The absolute derived category $\dabs{X,G,W}$ can be thought of as the derived category of the gauged LG model $(X,G,W)$. To justify this claim, we must introduce some context and notation. 

\begin{notation}\label{geometric context notation}
Let $Y$ be a smooth quasi-projective variety with a $G$-action. Suppose that $s$ is a regular section of a $G$-equivariant vector bundle $\E$ on $Y$ with vanishing locus $Z := Z(s)$. Let $\Gm$ act on the total space $\tot \E^\vee$ of the dual bundle to $\E$  by fiberwise dilation (the so-called \newterm{$R$-charge}) and consider the pairing $W=\langle -,s\rangle$ as a section of $\O_{\tot \E^\vee}(\chi)$ where $\chi$ is the projection character. 
\end{notation}

We have the following theorem, which has appeared in various forms due to Orlov \cite{Orlov2}, Isik \cite{Isik}, Shipman \cite{Shipman}, and, in its most general form, Hirano \cite{Hirano}.

\begin{theorem}[Proposition 4.8 of \cite{Hirano}]\label{Orlovs thm}
There exist an equivalence of categories
$$
\Omega: \dbcoh{[Z/G]} \stackrel{\sim}{\longrightarrow} \dabs{\tot \E^\vee, G \times \Gm, W}.
$$
\end{theorem}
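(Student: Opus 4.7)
The plan is to construct the functor $\Omega$ explicitly via a Koszul matrix factorization on $\tot \E^\vee$ and then prove it is an equivalence by producing an inverse and reducing to a local affine model where both sides become transparent.

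First, I would set up the relevant geometry. Let $\pi: \tot \E^\vee \to Y$ denote the bundle projection and $\tau \in \Gamma(\tot \E^\vee, \pi^* \E)$ the tautological section. Then $W = \langle \tau, \pi^* s\rangle$ as a function on $\tot \E^\vee$, and a direct computation shows that its critical locus is the zero section of $\tot \E^\vee|_{Z}$. The central gadget is the $(G \times \Gm)$-equivariant Koszul matrix factorization $K$ with underlying $\Z/2$-graded sheaf $\Lambda^\bullet \pi^* \E^\vee$, whose two differentials are contraction with $\tau$ and wedging with $\pi^* s$. The Koszul identity $\iota_\tau(\pi^* s \wedge -) + (\pi^* s \wedge -)\iota_\tau = W$ guarantees $K$ is a factorization of $W$; weighting $\Lambda^i \pi^* \E^\vee$ by the $i$-th power of the projection character $\chi$ makes the factorization $\Gm$-equivariant.

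Second, I would define the functor. For $\F \in \dbcoh{[Z/G]}$, lift $\F$ to a bounded complex $\mathcal G$ of $G$-equivariant coherent sheaves on $Y$ (for instance, by pushforward along the closed immersion $[Z/G] \hookrightarrow [Y/G]$, or by a resolution thereof) and set
\[
\Omega(\F) := \pi^* \mathcal G \otimes_{\O_{\tot \E^\vee}} K.
\]
Independence of the lift follows because any two lifts differ by complexes that are acyclic when pulled back to $Z$; tensoring such complexes with $K$ produces acyclic factorizations, and hence zero objects in $\dabs{\tot \E^\vee, G \times \Gm, W}$.

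Third, I would prove the equivalence. The candidate inverse $\Psi$ is derived restriction to the zero section $Y \hookrightarrow \tot \E^\vee$: since $W$ vanishes identically on $Y$, the restriction of a matrix factorization becomes a $\Z/2$-graded complex with honest differential, and the $\Gm$-grading unfolds this into a bounded $\Z$-graded complex, whose cohomology is automatically supported on $Z$ because the wedge-with-$\pi^* s$ component of the MF differential acts by zero only along $Z$. Both functors are compatible with quasi-coherent localization over $Y$, so I would cover $Y$ by $G$-stable affine opens on which $\E$ trivializes, reducing to the case $Y = \spec A$ and $\E = \O_Y^r$. There $\tot \E^\vee = \spec A[t_1, \ldots, t_r]$, $W = \sum s_i t_i$ with $(s_1, \ldots, s_r)$ a regular sequence, and the claim becomes the classical Koszul/MF equivalence identifying finitely generated $A/(s_1, \ldots, s_r)$-modules with matrix factorizations of a regular sequence.

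The main obstacle is essential surjectivity, since a priori there is no reason an arbitrary factorization should arise by pullback along $\pi$. I would address this by showing that $K$ is a classical generator of $\dabs{\tot \E^\vee, G \times \Gm, W}$ as a module category over $\dbcoh{[Y/G]}$, and that its derived endomorphism algebra $\op{End}(K)$ is quasi-isomorphic to $\O_Z$, concentrated in degree zero. The $\Gm$-grading is essential in this computation: it cuts down what would otherwise be an infinite tower of self-$\op{Ext}$'s in the ungraded MF category and leaves only the cohomology of the Koszul complex of $s$, which by regularity of $s$ computes $\O_Z$. A Bondal--Van den Bergh style Morita argument then upgrades this identification of endomorphism algebras into the desired equivalence $\Omega$.
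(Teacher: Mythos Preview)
The paper does not prove this statement at all: it is quoted as Proposition~4.8 of Hirano and attributed to the chain Orlov--Isik--Shipman--Hirano, with no argument given beyond the citation. So there is nothing in the paper to compare your proposal against.

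That said, your sketch is a reasonable outline of the standard approach in the literature (Isik and Shipman in particular proceed via the Koszul factorization you describe, and Hirano's equivariant version is built on the same skeleton). A few cautions if you want to turn this into an actual proof. First, the reduction to affine opens requires some care in the equivariant setting: $G$-stable affine opens trivializing $\E$ need not exist for general $G$, and the cited proofs instead work globally with the Koszul factorization and appeal to generation results rather than Zariski-local verification. Second, your claim that the $\Gm$-grading collapses $\op{End}(K)$ to $\O_Z$ in degree zero is the heart of the matter and deserves more than a sentence; this is where the regularity of $s$ is genuinely used, and making it precise in the absolute derived category (as opposed to the naive homotopy category of factorizations) is exactly what the cited papers spend their effort on. Third, the ``unfolding'' of a $\Z/2$-graded complex into a bounded $\Z$-graded one via the $\Gm$-weight is not automatic and is one of the points where the various versions of the theorem differ in hypotheses.
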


\begin{remark}
This theorem is also often viewed as (a variant of) Kn\"orrer periodicity or Orlov's theorem. In fact, one needs to use variations of geometric invariant theory from this theorem to recover Orlov's theorem.
\end{remark}

\subsection{Partial compactifications yielding categorical resolutions}\label{ptl compactifications abstract}

In effect, this subsection gives a mathematical introduction to the `exo' part of the exoflop, following \cite{FK18}. We first recall a categorical resolution of singularities, and provide a sufficient criteria for their existence using gauged LG models. We use the definition stated in loc. cit., but the one given by Kuznetsov \cite[Definition 3.2]{Kuz08} could be used throughout if the reader prefers.  Let $Z$ be a variety with a $G$-action and $\mathcal{D}$ an admissible subcategory of $\dbcoh{[Z/G]}$. We denote by $\mathcal{D}^{\op{perf}}$ the full subcategory of $\mathcal{D}$ consisting of $G$-equivariant perfect complexes on $Z$. 

\begin{definition}[Definition 3.1 of \cite{FK18}]\label{def: CCR}
Let $\tilde{\mathcal{D}}$ be the homotopy category of a homologically smooth and proper pretriangulated dg category. A pair of exact functors
\begin{align*}
F&:\tilde{\mathcal{D}} \to \mathcal{D}\\
G&:\mathcal{D}^{\op{perf}} \to \tilde{\mathcal{D}}
\end{align*}
is a \newterm{categorical resolution of singularities} if $G$ is left adjoint to $F$ and the natural morphism of functors $\op{Id}_{\mathcal{D}^{\op{perf}}} \to FG$ is an isomorphism. We say the categorical resolution is \newterm{crepant} if $G$ is right adjoint to $F$. 
\end{definition}

We first discuss step (exo) and its interactions with the absolute derived category. A good resource for this can be found in \cite[Section 3]{FK18}. Consider a variety $U$ equipped with an action by a linearly reductive group $G$, a character $\chi$ of $G$ and a section $W$ of $\O_U(\chi)$. Let 
$$
i: V \longrightarrow U
$$
be a $G$-equivariant open immersion. 
\begin{definition}[Definition 3.3 of \cite{FK18}]
Let $\dabs{V,G,W}_{\op{rel} U}$ denote the full subcategory of $\dabs{V, G, W}$ consisting of factorizations $\E$ where the closure of the support of $\E$ in $U$ does not intersect $U\setminus V$. 
\end{definition}
We then have the following functors
\begin{equation}\begin{aligned}\label{adjoint inclusions}
i_*: \dabs{V,G,W}_{\op{rel} U} &\longrightarrow \dabs{U,G,W}; \\ 
i^*:  \dabs{U,G,W} &\longrightarrow\dabs{V,G,W};
\end{aligned}\end{equation}
where $i_*$ is both left and right adjoint to $i^*$. We can use this to build crepant categorical resolutions, after giving a geometric context to the categories $\dabs{V,G,W}_{\op{rel} U}$ and $\dabs{V,G,W}$. We do this via the following variant of Theorem~\ref{Orlovs thm}. Recall Notation~\ref{geometric context notation}.
\begin{lemma}[Lemma 3.6 of \cite{FK18}]\label{zero section lemma}
Assume $Y$ admits a $G$-ample line bundle. The equivalence of categories 
$$
\Omega: \dbcoh{[Z/G]} \to \dabs{\tot \E^\vee, G \times \Gm, W}
$$
restricts to an equivalence between the full subcategory of perfect objects $\op{Perf} [Z/G]$ and the full subcategory of $\dabs{\tot \E^\vee, G \times \Gm, W}$ with objects supported on the zero section of $\E^\vee$. 
\end{lemma}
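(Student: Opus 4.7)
The plan is to exploit an explicit description of $\Omega$ via a Koszul factorization and translate the support condition between the two sides. Let $\pi:\tot\E^\vee\to Y$ be the projection, $\xi$ the tautological section of $\pi^*\E^\vee$, and $\pi^*s$ the pullback of $s$; these satisfy $\langle\xi,\pi^*s\rangle = W$. Following Shipman, Isik, and Hirano, for $\mathcal{F}\in\dbcoh{[Z/G]}$ the object $\Omega(\mathcal{F})$ is constructed by resolving $i_*\mathcal{F}$ on $Y$ (where $i:Z\hookrightarrow Y$) and folding the resolution into a factorization via the Koszul factorization $K$ with components $\wedge^\bullet \pi^*\E^\vee$ and differential $\iota_{\pi^*s}+\xi\wedge(-)$, whose square is $W$.

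For the forward inclusion, I would take $P\in\op{Perf}[Z/G]$. Using the $G$-ample line bundle hypothesis together with smoothness of $Y$, I lift $P$ to a bounded $G$-equivariant locally free resolution of $i_*P$ on $Y$, so that $\Omega(P)$ admits a representative with finite-rank $G\times\Gm$-equivariant locally free components on $\tot\E^\vee$. To see that $\Omega(P)$ is supported on the zero section, restrict to $U=\tot\E^\vee\setminus Y$, where $\xi$ is nowhere vanishing. Locally choosing a section $v$ of $\pi^*\E$ with $\langle\xi,v\rangle=1$, the operator $h=\iota_v$ satisfies $hD+Dh=\op{id}$ where $D=\iota_{\pi^*s}+\xi\wedge(-)$, providing a null-homotopy of $K|_U$. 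A standard patching argument (which converges precisely because of the finite-rank hypothesis on the components of $\Omega(P)$) then shows $\Omega(P)|_U\simeq 0$ in $\dabs{U, G\times\Gm, W|_U}$.

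For the reverse inclusion, given $\mathcal{G}$ supported on the zero section, I would show $\Omega^{-1}(\mathcal{G})\in\op{Perf}[Z/G]$ by invoking the Verdier localization induced by the open immersion $j:U\hookrightarrow \tot\E^\vee$: the kernel of $j^*:\dabs{\tot\E^\vee,G\times\Gm,W}\to\dabs{U,G\times\Gm,W|_U}$ is precisely the zero-section supported subcategory. Transporting this sequence across $\Omega$ and using the forward inclusion to identify $\Omega(\op{Perf}[Z/G])$ with a subcategory of the zero-section supported objects, I would deduce equality by showing the induced functor on quotients $\dsing{[Z/G]}\to\dabs{U,G\times\Gm,W|_U}$ is fully faithful, using a singularity-category argument analogous to the matrix factorization–singularity category correspondence.

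The main obstacle is making this quotient identification precise in the $G\times\Gm$-equivariant setting (particularly verifying that the R-charge $\Gm$-grading matches the $\Z$-grading in the singularity-category interpretation) and globalizing the local Gauss-elimination via a patching or spectral sequence argument compatible with the finite-rank hypothesis. The $G$-ampleness assumption enters throughout: it ensures the existence of bounded $G$-equivariant locally free resolutions on $Y$, guarantees that $\op{Perf}[Z/G]$ is classically generated by equivariant vector bundles, and makes Verdier localization well-behaved in the equivariant setting.
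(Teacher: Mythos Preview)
The paper does not supply its own proof of this lemma: it is stated as Lemma~3.6 of \cite{FK18} and used as a black box, with no argument given in the present paper. There is therefore nothing in this source against which to compare your attempt.

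That said, your outline is broadly the right shape for how such a result is proven. The forward direction is essentially correct: the Koszul factorization built from $\iota_{\pi^*s}+\xi\wedge(-)$ is contractible wherever $\xi$ is nonvanishing, and the $G$-ampleness hypothesis is exactly what is needed to resolve $i_*P$ by finitely many $G$-equivariant locally free sheaves so that the local contracting homotopies assemble. For the reverse direction, your strategy via the localization sequence and the singularity category is the standard one, and your reduction is logically sound: once $\Omega(\op{Perf}[Z/G])$ sits inside the zero-section-supported subcategory and the latter equals $\ker j^*$, faithfulness of the induced functor $\dsing{[Z/G]}\to\dabs{U,G\times\Gm,W|_U}$ forces equality. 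The honest gap you flag---identifying $\dabs{U,G\times\Gm,W|_U}$ with an equivariant singularity category and checking the $R$-charge grading matches---is real but is precisely the content of the graded/equivariant Orlov theorem (as in \cite{Orlov2, Hirano}), applied after observing that $[U/\Gm]$ has the homotopy type of the projectivized bundle. You should cite that rather than re-derive it; once invoked, the argument closes.
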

Note that if the zero section is already proper, then the image of the subcategory of perfect objects will not intersect with the partial compactification. Thus, to find a crepant categorical resolution, we find an appropriate partial compactification of $\tot \E^\vee$. 

\section{Partial compactifications for toric gauged LG models}\label{sec: exo}

In this section, we contextualise the ideas in \textsection\ref{ptl compactifications abstract} to a toric setting. 
Fix a lattice $M$ of rank $d$ with dual lattice $N$, equipped with the pairing\[
\langle -,-\rangle: M\times N\rightarrow \Z.
\]
\noindent Extend this pairing $\R$-linearly to $M_{\R} := M \otimes_{\Z}\R$ and $N_{\R} := N \otimes_{\Z}\R$. Often we will consider the spaces $M_{\R}\times \R^r$ and $N_{\R}\times \R^r$ for some $r\in \Z$ and extend the inner product using the Euclidean inner product.

\subsection{Cox stacks}
Let $\Sigma$ be a fan in $N_{\R}$. We define a quotient stack $\mathcal{X}_{\Sigma}$ associated to the fan $\Sigma$ following the Cox construction as follows (see, e.g., \cite[Section 5.1]{CLS} for the standard construction). Let $\nu = \{u_{\rho} \ | \ \rho \in \Sigma(1)\} \subseteq N$, where $u_\rho$ is the primitive lattice generator of the ray $\rho \in \Sigma(1)$. Consider the vector space $\R^{\Sigma(1)}$ with elementary $\Z$-basis vectors $e_\rho$ for each $\rho \in \Sigma(1)$. We construct a new fan
\begin{equation}
\op{Cox}(\Sigma) := \{ \op{Cone}(e_\rho \ | \ \rho \in \sigma) \ | \ \sigma \in \Sigma\}
\end{equation}
This fan is a subfan of the standard fan for $\mathbb{A}^{| \Sigma(1)|}$, hence its associated toric variety is an open subset of affine space. 

\begin{definition}
We call $U_\Sigma:= X_{\op{Cox}(\Sigma)}$ the \newterm{Cox open set} associated to $\Sigma$.
\end{definition}
There is a canonical group acting on $U_\Sigma$. Consider the right exact sequence 
\begin{equation}
M \stackrel{f_{\nu}}{\longrightarrow} \Z^{\nu} \stackrel{\pi}{\longrightarrow} \op{coker}(f_\nu) \longrightarrow 0
\end{equation}
where $f_{\nu}(m) := \sum_{\rho \in \Sigma(1)} \langle u_\rho, m\rangle e_\rho$. Applying the functor $\Hom(-,\mathbb{G}_m)$ to this sequence yields the left exact sequence\[
1\rightarrow \Hom(\coker(f_\nu),\mathbb{G}_m)\xrightarrow{\widehat{\pi}}\mathbb{G}_m^{\nu} \xrightarrow{\widehat{f_\nu}}\mathbb{G}_m^d.
\]
Define \[
S_{\Sigma}:=\Hom(\coker(f_\nu),\mathbb{G}_m).
\]
We note that $S_{\Sigma}$ acts on the open set $U_\Sigma$ constructed above.

\begin{definition}
    \label{Def:Coxstack} 
    Define the \newterm{Cox stack} associated to $\Sigma$ to be \[
   \mathcal{X}_\Sigma:=[U_\Sigma/S_{\Sigma}].
   \]
\end{definition}

\subsection{Toric vector bundles}\label{subsec:toric vector bundles}

Let $\Sigma$ be a fan in $N_{\R}$. For each ray $\rho\in\Sigma(1)$ with primitive generator $u_\rho$, denote by $D_\rho$ the torus-invariant divisor associated to it. Recall that any torus-invariant Weil divisor $D$ on $X_\Sigma$ can be written as a linear combination
\[
D=\sum_{\rho\in \Sigma(1)}a_\rho D_\rho,
\]
for some $a_\rho\in \Z$. Consider now a collection of $r$ such divisors, $D_i=\sum_{\rho\in\Sigma(1)}a_{i\rho}D_\rho$, with $a_{i\rho}\in\Z$.

Write $e_1, \dots, e_r$ for the elementary $\Z$-basis for $\R^r$, and we denote by $e_1^*, \dots, e_r^*$ its dual basis.

For any cone $\sigma\in\Sigma$, define the cone
\[
\sigma_{D_1,\dots,D_r}:=\cone(\{u_\rho-a_{1\rho}e_1-\dots-a_{r\rho}e_r\vert \rho\in\sigma(1)\}\cup\{e_i\mid i\in\{1,\dots,r\})\subseteq N_{\R}\times \R^r.
\]
By considering the collection $\{\sigma_{D_1,\dots,D_r}\mid \sigma\in\Sigma\}$ of these cones, together with their proper faces, we form the fan $\Sigma_{D_1,\dots,D_r}\subseteq N_{\R}\times \R^r$. Note that the rays of $\Sigma_{D_1,\dots,D_r}$ are given by 
$$
\bar\rho := \op{Cone}( u_\rho-a_{1\rho}e_1-\dots-a_{r\rho}e_r) \text{ for all $\rho \in \Sigma(1)$}
$$
and $\epsilon_i := \op{Cone}(e_i)$ for $i \in \{1, \dots, r\}$. 
The toric variety associated to the fan $\Sigma_{D_1,\dots,D_r}$ is the total space of the vector bundle $\bigoplus_{i=1}^r \O(D_i)$ over $X_\Sigma$ (see, e.g., Proposition 7.3.1 of \cite{CLS}). The analogous proposition for toric stacks is the following. 

\begin{proposition}[Proposition 5.16 in \cite{FK14}]
    \label{Prop:FK14Prop5.16}
    Let $D_1,\dots,D_r$ be torus-invariant Weil divisors on $X_\Sigma$ defined as above. There is an isomorphism of stacks\[
    \mathcal{X}_{\Sigma_{D_1,\dots,D_r}}\simeq \tot\left(\bigoplus_{i=1}^r\O_{\mathcal{X}_\Sigma}(D_i)\right).
    \]
\end{proposition}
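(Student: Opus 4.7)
My plan is to realise both sides as quotient stacks via the Cox construction of Definition~\ref{Def:Coxstack} and match the data directly. Write $\Sigma' := \Sigma_{D_1,\dots,D_r}$ for brevity; the left-hand side is $\mathcal{X}_{\Sigma'} = [U_{\Sigma'}/S_{\Sigma'}]$. On the other side, the toric vector bundle $\bigoplus_i\O_{\mathcal{X}_\Sigma}(D_i)$ has total space admitting a presentation as a quotient stack $[U_\Sigma \times \A^r / S_\Sigma]$ in which $S_\Sigma$ acts on $U_\Sigma$ in the standard way and scales the $j$-th coordinate of $\A^r$ via the character corresponding to $[D_j] \in \coker(f_\nu) = \op{Cl}(\mathcal{X}_\Sigma)$. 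The goal is to show the two presentations agree.

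First, I would analyse the Cox open set $U_{\Sigma'}$. The rays of $\Sigma'$ are precisely $\bar\rho = \cone(u_\rho - \sum_i a_{i\rho}e_i)$ for $\rho \in \Sigma(1)$ together with $\epsilon_j = \cone(e_j)$ for $j = 1, \dots, r$, and by construction every maximal cone $\sigma_{D_1,\dots,D_r}$ of $\Sigma'$ contains all of $\epsilon_1,\dots,\epsilon_r$ among its rays. Consequently every Cox cone $\cone(e_\tau : \tau \in \nu(1))$ for $\nu \in \Sigma'$ is a product of a Cox cone of $\Sigma$ with a face of $(\R_{\geq 0})^r$, so $\op{Cox}(\Sigma') = \op{Cox}(\Sigma) \times (\R_{\geq 0})^r$ as fans, and hence $U_{\Sigma'} = U_\Sigma \times \A^r$.

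Next, I would compute $S_{\Sigma'}$ and its action. The map $f_\nu^{\Sigma'}: M \oplus \Z^r \to \Z^{\Sigma(1)} \oplus \Z^r$ sends $(m,n)$ to $\bigl((\langle u_\rho,m\rangle - \sum_i a_{i\rho}n_i)_\rho,\, (n_j)_j\bigr)$, so in block form it reads $\left(\begin{smallmatrix} A & B \\ 0 & I_r\end{smallmatrix}\right)$, where $A$ is the matrix of $f_\nu^\Sigma$ and $B$ has $(\rho,j)$-entry $-a_{j\rho}$. Passing to the cokernel, the identity block imposes $e_{\epsilon_j} \equiv \sum_\rho a_{j\rho}e_{\bar\rho}$ for each $j$, eliminating the $\Z^r$-summand, so $\coker(f_\nu^{\Sigma'}) = \coker(f_\nu^\Sigma) = \op{Cl}(\mathcal{X}_\Sigma)$ and dualising yields $S_{\Sigma'} \cong S_\Sigma$. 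Under this identification, the character through which $S_\Sigma$ scales the coordinate $t_j$ attached to $\epsilon_j$ is $[e_{\epsilon_j}] = \sum_\rho a_{j\rho}[D_\rho] = [D_j]$, which matches the weight appearing on the $j$-th $\A^1$-factor in the presentation of $\tot\bigl(\bigoplus_i\O_{\mathcal{X}_\Sigma}(D_i)\bigr)$. Moreover, the induced morphism $[U_\Sigma\times\A^r/S_\Sigma] \to [U_\Sigma/S_\Sigma]$ coming from the projection $U_\Sigma\times\A^r\to U_\Sigma$ agrees with the bundle projection, so the isomorphism is one of stacks over $\mathcal{X}_\Sigma$.

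The main place where care is required is the sign bookkeeping: one must verify that the class of $e_{\epsilon_j}$ in $\coker(f_\nu^{\Sigma'})$ is $+[D_j]$ rather than $-[D_j]$, and that this convention matches the one built into the stack-theoretic total space of a direct sum of line bundles. To confirm these, I would sanity check on $X_\Sigma = \P^1$ with $D = -D_{\rho_1}$, where the formula should recover the blowup of $\A^2$ at the origin, namely $\tot(\O_{\P^1}(-1))$.
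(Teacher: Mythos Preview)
The paper does not supply its own proof of this proposition; it is imported as background from \cite{FK14}. Your argument is correct and is essentially the standard approach: unwind the Cox presentation on each side and match the open sets, groups, and weights. One minor point of phrasing: the literal assertion that $\op{Cox}(\Sigma') = \op{Cox}(\Sigma) \times (\R_{\geq 0})^r$ as \emph{fans} is only immediate when $\Sigma$ is simplicial, since in general a proper face of $\sigma_{D_1,\dots,D_r}$ need not have ray set of product form. What you actually use---and what holds without any simpliciality hypothesis---is that the \emph{maximal} cones of $\op{Cox}(\Sigma')$ are precisely $\hat\sigma \times (\R_{\geq 0})^r$ for $\sigma$ maximal in $\Sigma$, and this already forces $U_{\Sigma'} = U_\Sigma \times \A^r$. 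The block-matrix computation of $\coker(f_\nu^{\Sigma'}) \cong \coker(f_\nu^\Sigma)$ identifying the class of $e_{\epsilon_j}$ with $[D_j]$ is exactly right, and the sign concern you flag is resolved correctly.
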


Recall that the set of global functions on a toric variety $\Sigma$ in $N_{\R}$ can be computed as follows. Take the support $\sigma = |\Sigma|$ and compute the dual cone $\sigma^\vee \subseteq M_{\R}$. Then for any $m \in \sigma^\vee\cap M$, we obtain a monomial
\begin{equation}\label{toric monomial}
x^m := \prod_{\rho \in \Sigma(1)} x_\rho^{\langle m, u_\rho\rangle}
\end{equation}
that is a global function on $X_\Sigma$. Any algebraic map $X_\Sigma \to \C$ can be written as a linear combination of $x^m$ for some $m \in \sigma^\vee\cap M$. While we will often consider $\Sigma$ to be complete, hence only have constant global functions, the supports of the fans $\Sigma_{D_1,\dots,D_r}$ will be strictly convex cones, which will have have maximal dimension dual cones.

We now restrict to $\Sigma$ being a complete fan in $N_{\R}$. The polytope associated to the torus-invariant Weil divisor $D_i= \sum_{\rho \in \Sigma(1)} a_{\rho i} D_\rho$  is given by 
$$
P_{D_i} = \{ m \in M_{\R} \ | \ \langle m, u_\rho\rangle \ge -a_{\rho i} \text{ for all } \rho \in \Sigma(1)\}.
$$
Note that $m \in P_{D_i}$ if and only if $\langle m + e_i^*, u_\rho + a_{\rho i}e_i\rangle \ge 0$ for all $\rho \in \Sigma(1)$. Thus
$m \in P_{D_i}$ if and only if $m + e_i^* \in |\Sigma_{-D_1,\dots,-D_r}|^\vee$. Consider the (codimension $r$) hyperplane $H_i := \{ (m, \delta_{1i}, \dots, \delta_{ri}) \ | \ m \in M_{\R}\}$.\footnote{Equivalently,  the hyperplane can be defined as $H_i = \{ (m, b_1, \dots, b_r) \ | \ b_i =1, b_j = 0 \text{ for all $j \ne i$}\}$.} 
Thus we get that the global sections of each divisor
$$
f_i = \sum_{m \in P_{D_i} \cap M} c_m \prod_{\rho \in \Sigma(1)} x_\rho^{\langle m, u_\rho\rangle+a_{\rho i}} \in \Gamma(X_{\Sigma}, \O(D_i))  $$
correspond to the global functions on $\op{tot} \bigoplus_{i=1}^r \O(-D_i)$
\begin{equation}\begin{aligned} s_i &= \sum_{(m, b_1, \dots, b_r) \in H_i \cap (M\times \Z^r)} c_m \prod_{ \bar\rho \in \Sigma_{-D_1,\dots,-D_r}(1)} x_{\bar\rho}^{\langle m+ \sum_{i=1}^r b_i e_i, u_{\bar\rho}\rangle} \\
	&= u_i \sum_{(m, b_1, \dots, b_r) \in H_i\cap (M \times \Z^r)} c_m \prod_{ \rho \in \Sigma(1)} x_{\bar\rho}^{\langle m, u_\rho\rangle+a_{\rho i}} ,
\end{aligned}\end{equation}
where we denote the coordinate associated to the ray $\epsilon_i$ by $u_i$ and that corresponding to $\bar\rho$ by $x_\rho$.  By abuse of notation, we will write $s_i = u_if_i$, as it is equivalent if one conflates $x_\rho$ with $x_{\bar\rho}$. Write $$[\mathcal{Z}/G] = [Z(f_1, \dots, f_r) / S_{\Sigma}] \subseteq [U_\Sigma / S_{\Sigma}]$$ and $W = \sum_{i=1}^r u_if_i$. We recall the $R$-charge action of $\Gm$ acting on a vector bundle by fiberwise dilation (see Notation~\ref{geometric context notation}) and consider the projection character 
$$
\chi: S_{\Sigma} \times \Gm \to \Gm.
$$
Note that 
$$
W \in \Gamma(U_{\Sigma_{-D_1,\dots,-D_r}}, \O_{U_{\Sigma_{-D_1,\dots,-D_r}}}(\chi))^{S_{\Sigma_{-D_1,\dots,-D_r}} \times \Gm}.
$$
\noindent We remark this is equivalent to $W$ being semi-invariant with respect to the character $\chi$ \cite[Definition 4.3]{FK19}.

\begin{corollary}\label{cor: Hirano}
There exists an equivalence of categories
$$
\Omega: \dbcoh{[\mathcal{Z}/G]} \stackrel{\sim} \longrightarrow \dabs{ U_{\Sigma_{-D_1,\dots,-D_r}}, S_{\Sigma_{-D_1,\dots,-D_r}(1)} \times \Gm, W},
$$
where the $\Gm$ acts with weights 0 on the coordinates $x_\rho$ and 1 on the $u_i$.
\end{corollary}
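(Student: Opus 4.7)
The plan is to recognize this as a direct application of Theorem~\ref{Orlovs thm} (Hirano's equivalence) to the geometric data constructed in Example~\ref{toric setting}. First, I would set $Y = \mathcal{X}_\Sigma$, the Cox stack, which is smooth and Deligne--Mumford, with $G = S_\Sigma$ acting via the Cox construction, and take $\mathcal{E} = \bigoplus_{i=1}^r \mathcal{O}_{\mathcal{X}_\Sigma}(D_i)$ as the $G$-equivariant vector bundle on $Y$. The tuple $s = (f_1,\dots,f_r)$ defines a section of $\mathcal{E}$ with vanishing locus $[\mathcal{Z}/G]$; regularity of $s$ is exactly the assumption that $\mathcal{Z}$ is a complete intersection of the expected codimension, so the hypotheses of Notation~\ref{geometric context notation} are satisfied.

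Next, I would identify the total space of the dual bundle with the relevant Cox stack. By Proposition~\ref{Prop:FK14Prop5.16} applied to $-D_1,\dots,-D_r$, there is an isomorphism
$$
\tot\mathcal{E}^\vee \;=\; \tot\!\left(\bigoplus_{i=1}^r\mathcal{O}_{\mathcal{X}_\Sigma}(-D_i)\right) \;\simeq\; \mathcal{X}_{\Sigma_{-D_1,\dots,-D_r}} \;=\; [U_{\Sigma_{-D_1,\dots,-D_r}}/S_{\Sigma_{-D_1,\dots,-D_r}}].
$$
Under this identification, the fiber coordinates on $\tot\mathcal{E}^\vee$ correspond to the monomials $u_i$ attached to the new rays $\epsilon_i = \op{Cone}(e_i)$.

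Then I would verify that the natural pairing $W = \langle-,s\rangle$ unwinds in these Cox coordinates to $W = \sum_{i=1}^r u_i f_i$. This is precisely the computation already carried out in Example~\ref{toric setting}, where the bijection between lattice points $m\in P_{D_i}\cap M$ and lattice points in the hyperplane $H_i$ in $(M\oplus \Z^r)$ is used to show that each $f_i\in \Gamma(X_\Sigma,\mathcal{O}(D_i))$ lifts, after multiplication by $u_i$, to a global function on $\tot\mathcal{E}^\vee$. The fiberwise dilation action (the $R$-charge of Notation~\ref{geometric context notation}) corresponds, in the Cox presentation, to $\Gm$ acting with weight $1$ on each $u_i$ and weight $0$ on each $x_\rho$; under this action $W$ is semi-invariant with respect to the projection character $\chi$, as noted at the end of Example~\ref{toric setting}. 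Applying Theorem~\ref{Orlovs thm} to $(Y,G,\mathcal{E},s)$ with this choice of $\Gm$-action and this semi-invariant $W$ immediately yields the claimed equivalence.

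The content of this corollary is essentially bookkeeping rather than new mathematical input: the only real point to pin down is that Proposition~\ref{Prop:FK14Prop5.16} correctly identifies $\tot\mathcal{E}^\vee$ with $\mathcal{X}_{\Sigma_{-D_1,\dots,-D_r}}$ in such a way that the pairing $\langle-,s\rangle$ matches $\sum u_i f_i$ and the $\Gm$-weights are as stated, so that Theorem~\ref{Orlovs thm} applies directly. The mildest potential obstacle is verifying regularity of the section $s$, but this is built into the standing complete-intersection assumption on $\mathcal{Z}$.
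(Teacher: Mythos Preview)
Your proposal is correct and matches the paper's approach: the paper's own proof is the single sentence ``Follows directly from Theorem~\ref{Orlovs thm},'' and you have simply unpacked the bookkeeping behind that line. One small notational slip: in the setup of Notation~\ref{geometric context notation} and Theorem~\ref{Orlovs thm}, $Y$ is a smooth variety carrying a $G$-action, not the quotient stack; so you should take $Y = U_\Sigma$ with $G = S_\Sigma$ (so that $[Y/G] = \mathcal{X}_\Sigma$), rather than $Y = \mathcal{X}_\Sigma$ itself.
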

\begin{proof}
Follows directly from Theorem~\ref{Orlovs thm}.
\end{proof}
In light of the above corollary, the gauged Landau-Ginzburg model associated to the complete intersection $[\mathcal{Z}/G]$ is \[
(U_{\Sigma_{-D_1,\dots,-D_r}}, S_{\Sigma_{-D_1,\dots,-D_r}(1)} \times \Gm, W).
\]
\begin{remark}\label{simplicial replacement}
 One can equip any simplicial fan $\tilde{\Sigma}$ with $\tilde{\Sigma}(1)=\Sigma(1)$ with the same superpotential. This fact becomes important when comparing factorization categories associated to different toric gauged LG models related by exoflops.
 \end{remark}

\subsection{Partial compactifications and crepant categorical resolutions}

We continue with the setup of \textsection~\ref{subsec:toric vector bundles} and keep the notations $\Sigma, D_i, W, H_i, f_i, s_i$ as above. In light of Remark~\ref{simplicial replacement}, we assume $\Sigma$ is simplicial and thus so is $\Sigma_{-D_1,\dots,-D_r}$. Given the global function $W: \op{tot} \bigoplus_{i=1}^r \O(-D_i) \to \mathbb{A}^1$, we can define
$$
\Xi_{i, W} := \{ \bar m = (m, \delta_{1i}, \dots, \delta_{ri}) \in H_i \ | \ c_m \ne 0\}.
$$
Note $\Xi_{i,W} \subseteq P_{D_i}+e_i^*$. Take $\Xi_W = \bigcup_{i=1} \Xi_{i,W}$. Note that $\sigma_W := \op{Cone}(\Xi_W) \subseteq |\Sigma_{-D_1,\dots,-D_r}|^\vee$, hence $\sigma_W^\vee \supseteq |\Sigma_{-D_1,\dots,-D_r}|$. 

Take a (strictly convex) rational polyhedral cone $\sigma'$ so that $ |\Sigma_{-D_1,\dots,-D_r}| \subseteq \sigma' \subseteq \sigma_{W}^\vee$. We have the following result.

\begin{lemma}\label{ptl compactification}
There exists a simplicial fan $\Psi$ with support $\sigma'$ so that $\Sigma_{-D_1, \dots, -D_r}$ is a subfan of $\Psi$.
\end{lemma}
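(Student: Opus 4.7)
The plan is to translate the question into one about extending a triangulation of a polytope, using the cone-over-polytope correspondence. Because $\sigma'$ is strictly convex and rational polyhedral, its dual $(\sigma')^\vee$ has nonempty interior, and we may choose a rational $u$ in that interior. Setting $H := \{ v \in N_\R \times \R^r : \langle u, v \rangle = 1 \}$, the intersection $P := \sigma' \cap H$ is a full-dimensional rational polytope, and $\sigma'$ is precisely the cone from the origin over $P$. Since $|\Sigma_{-D_1,\dots,-D_r}| \subseteq \sigma'$, each cone $\tau \in \Sigma_{-D_1,\dots,-D_r}$ meets $H$ in a rational polytope $\tau \cap H$; the assumed simpliciality of $\Sigma_{-D_1,\dots,-D_r}$ (which follows from simpliciality of $\Sigma$) guarantees that each $\tau \cap H$ is a simplex. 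These simplices assemble into a rational simplicial subcomplex $K$ of $P$ with $|K| = |\Sigma_{-D_1,\dots,-D_r}| \cap H$.

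The task is now to extend $K$ to a triangulation $T$ of $P$ without disturbing the simplices already in $K$. This is a standard fact in polyhedral combinatorics, provable by induction on $\dim P$: on each facet $F$ of $P$, extend the subcomplex $K \cap F$ to a triangulation of $F$ via the inductive hypothesis, then cone the resulting triangulation of $\partial P$ from a rational interior point of $P$, subdividing further only where necessary to accommodate those simplices of $K$ whose relative interiors meet $\op{int}(P)$. Rationality is preserved throughout because $P$ is rational and all auxiliary vertices may be chosen rationally. The outcome is a rational triangulation $T$ of $P$ containing $K$ as a full subcomplex.

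Coning $T$ back from the origin produces a rational polyhedral fan $\Psi$ in $N_\R \times \R^r$ whose cones are the cones over the simplices of $T$. By construction $|\Psi| = \sigma'$, the fan $\Psi$ is simplicial, and since each $\tau \in \Sigma_{-D_1,\dots,-D_r}$ is the cone over the simplex $\tau \cap H \in K \subseteq T$, one obtains $\Sigma_{-D_1,\dots,-D_r}$ as a subfan of $\Psi$. The main potential obstacle is the inductive extension of the triangulation when $|K|$ has simplices reaching into the interior of $P$; however, this is handled by a careful (and standard) choice of coning point and auxiliary vertices lying away from $|K|$, which avoids accidentally refining any simplex of $K$.
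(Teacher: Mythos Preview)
Your reduction---slicing $\sigma'$ by an affine hyperplane to turn the question into one about extending a simplicial complex $K$ (with convex support $Q=|\Sigma_{-D_1,\dots,-D_r}|\cap H$) to a triangulation of the polytope $P=\sigma'\cap H$---is exactly the right move, and is the same first step the paper takes.  The gap is in your proposed extension argument.  Coning a triangulation of $\partial P$ from an interior point $p$ produces a triangulation in which every maximal simplex has $p$ as a vertex; none of the interior simplices of $K$ contain $p$, so this coning triangulation is simply incompatible with $K$, and no amount of ``further subdivision'' of the coned complex will recover those simplices without cutting through them.  (Already in dimension two: take $P=[0,2]^2$, let $K$ be a diagonal triangulation of $Q=[0,1]^2$, and cone from any $p\in P$; the ray from $p$ to the origin slices at least one triangle of $K$.)  Your appeal to a ``careful choice of coning point'' cannot avoid this, since for any $p$ the issue persists.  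The standard repair is a \emph{placing} extension rather than an interior-cone one: list the vertices of $P$ not in $Q$ and add them one at a time, at each step coning the new vertex over the facets of the current triangulated convex hull that are visible from it.  This never touches the existing simplices and outputs a triangulation of $P$ extending $K$.

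The paper's own route is different in method.  It proves the stronger Lemma~\ref{ExtendRegtriang}: starting from a \emph{regular} triangulation of a point set $L_0$, one extends it to a regular triangulation of $L_0\cup L_1$ by lifting to height functions and choosing the new weights large enough that the lower hull over $L_0$ is undisturbed.  This inductive ``add one point with a sufficiently high lift'' argument is essentially the placing idea executed on the lifted polytope, and it buys more than mere simpliciality: the resulting fan comes from a regular triangulation, so $X_\Psi$ is semiprojective (Corollary~\ref{reg triang}), which is what the paper actually needs for the VGIT machinery in \S\ref{sec: flop}.  Your argument, once patched with placing, would establish Lemma~\ref{ptl compactification} but would not by itself give semiprojectivity; to match the paper you would have to observe in addition that placing triangulations are regular.
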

\noindent This lemma is a corollary of Lemma \ref{ExtendRegtriang} proven in  Subsection~\ref{subsec:convex geometry} using convex geometry, so we postpone its proof. 

Using Corollary 4.23 of \cite{FK18}, we have a stack isomorphism $$\varphi: [U_{\Sigma_{-D_1, \dots, -D_r}} \times \Gm^{\Psi(1) \setminus \Sigma_{-D_1, \dots, -D_r}(1)} / S_{\Psi}] \stackrel{\sim}{\longrightarrow} [U_{\Sigma_{-D_1, \dots, -D_r}} / S_{\Sigma_{-D_1, \dots, -D_r}}],$$ which induces an equivalence of categories on the associated absolute derived categories.
 We then consider the $S_\Psi$-equivariant open immersion 
\begin{equation}\label{the immersion}
i: U_{\Sigma_{-D_1, \dots, -D_r}} \times \Gm^{\Psi(1) \setminus \Sigma_{-D_1, \dots, -D_r}(1)} \hookrightarrow U_{\Psi}.
\end{equation}
As shorthand, we write $U_{\Sigma, D_i}' := U_{\Sigma_{-D_1, \dots, -D_r}} \times \Gm^{\Psi(1) \setminus \Sigma_{-D_1, \dots, -D_r}(1)}$. 
Note that the function $W: \op{tot} \bigoplus_{i=1}^r \O(-D_i)\to \mathbb{A}^1$ can be viewed as an $S_{\Sigma_{-D_1, \dots, -D_r}}$-invariant function $W: U_{\Sigma_{-D_1, \dots, -D_r}} \to \mathbb{A}^1$. Extend the $R$-charge trivially to the new variables. 
Since $W$ is a linear combination of toric monomials in the dual cone to $|\Psi|$, the function is then extended to
\begin{equation}\label{extended potential}
\bar W \in \Gamma( U_{\Psi}, \O_{U_\Psi}(\chi))^{S_{\Psi} \times \Gm}
\end{equation}
where $\chi:S_{\Psi} \times \Gm \to \Gm$ is again the projection. Indeed, the extension $\bar W$ is a section of $\O_{U_\Psi}(\chi)$ because each toric monomial $x^{\bar m}$, once written in coordinates, is of the form $u_i \cdot \prod_{\bar \rho \in \Psi(1) \setminus \{\epsilon_i\} } x_{\bar \rho}^{\langle \bar m, u_{\bar \rho}\rangle}$ for some $i$, where the $u_i$ are the bundle coordinates as in \textsection~\ref{subsec:toric vector bundles}. The following is essentially an alternative stating of Theorem 3.7 of \cite{FK18} in the toric setting.

\begin{theorem}\label{partial compact CCR}
Suppose $\dabs{U_{\Psi}, G_{\Psi} \times \Gm, \bar W}$ is homologically smooth and proper. Then we have the following crepant categorical resolution
\begin{equation}\begin{aligned}\label{adjoint inclusions 2}
i_* \circ \varphi^* \circ\Omega: \op{Perf} [\mathcal Z/G] &\longrightarrow \dabs{U_{\Psi}, G_{\Psi} \times \Gm, \bar W}; \\ 
\Omega^{-1}\circ \varphi_* \circ i^*:  \dabs{U_{\Psi}, G_{\Psi} \times \Gm, \bar W} &\longrightarrow \dbcoh{[\mathcal Z/G]};
\end{aligned}\end{equation}
\end{theorem}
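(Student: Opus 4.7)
The plan is to realize this as a direct toric translation of Theorem 3.7 of \cite{FK18}. The bulk of the work is therefore bookkeeping: assembling the equivalences on one side and checking the support condition that lets $i_*$ be applied to perfect objects.

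First I would assemble the equivalences. By Corollary~\ref{cor: Hirano}, $\Omega$ identifies $\dbcoh{[\mathcal Z/G]}$ with the absolute derived category on the Cox open set $U_{\Sigma_{-D_1,\dots,-D_r}}$. Since $\varphi$ is a stack isomorphism (Corollary 4.23 of \cite{FK18}), pullback along $\varphi$ gives an equivalence from the latter to $\dabs{U_{\Sigma, D_i}', S_{\Psi} \times \Gm, W}$. The $S_\Psi$-equivariant open immersion $i$ of (\ref{the immersion}), together with the extension $\bar W$ from (\ref{extended potential}), supplies the adjoint pair $(i_*, i^*)$ of (\ref{adjoint inclusions}) in which $i_*$ is both left and right adjoint to $i^*$.

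Next I would verify that, when restricted to $\op{Perf}[\mathcal Z/G]$, the composition $\varphi^* \circ \Omega$ lands in the full subcategory $\dabs{U_{\Sigma, D_i}', S_\Psi \times \Gm, W}_{\op{rel} U_\Psi}$, so that $i_*$ is actually defined on it. The key input is Lemma~\ref{zero section lemma}: under $\Omega$, perfect objects correspond precisely to factorizations supported on the zero section of $\bigoplus_{i=1}^r \O(-D_i)$. Since the zero section is a copy of $\mathcal X_\Sigma$ and $\Sigma$ is complete, this support is proper and sits inside $U_{\Sigma_{-D_1,\dots,-D_r}}$ away from the coordinate directions corresponding to the new rays of $\Psi(1)\setminus\Sigma_{-D_1,\dots,-D_r}(1)$. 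After the trivial $\Gm^{\Psi(1)\setminus\Sigma_{-D_1,\dots,-D_r}(1)}$-extension induced by $\varphi$, the closure inside $U_\Psi$ of this support does not meet the added boundary, exactly the condition needed to land in the relative subcategory.

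With these pieces in place, I would invoke Theorem 3.7 of \cite{FK18} essentially verbatim. Under the standing hypothesis that $\dabs{U_\Psi, S_\Psi \times \Gm, \bar W}$ is homologically smooth and proper, that theorem says the adjoint pair $(i_*, i^*)$ between $\dabs{U_{\Sigma,D_i}', S_\Psi \times \Gm, W}_{\op{rel} U_\Psi}$ and $\dabs{U_\Psi, S_\Psi \times \Gm, \bar W}$ forms a crepant categorical resolution. Composing $i_*$ on its source side with $\varphi^* \circ \Omega$, and $i^*$ on its target side with $\Omega^{-1} \circ \varphi_*$, produces the displayed pair. The adjunction properties and the isomorphism $\operatorname{Id}_{\op{Perf}[\mathcal Z/G]} \to (\Omega^{-1}\varphi_* i^*)(i_* \varphi^* \Omega)$ follow by transporting the corresponding data across the equivalences.

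The main obstacle is the support verification in the second step: one must genuinely check that the closure of the support of an image factorization does not intersect the boundary divisors introduced by passing from $\Sigma_{-D_1,\dots,-D_r}$ to $\Psi$. This reduces to the toric observation that the new rays lie outside $|\Sigma_{-D_1,\dots,-D_r}|$, so the corresponding torus-invariant divisors of $U_\Psi$ are disjoint from the closure of the zero section preimage. Once that compatibility is established, the proof is formal.
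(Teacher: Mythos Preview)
Your proposal is correct and follows essentially the same route as the paper. Both arguments use Lemma~\ref{zero section lemma} to place the image of $\op{Perf}[\mathcal Z/G]$ on the zero section $Z(u_1,\dots,u_r)$, observe that this locus is disjoint from the added boundary so the image lies in the relative subcategory, and then conclude via the adjunction $(i_*,i^*)$; the paper writes out the composite $\Omega^{-1}\varphi_* i^* i_* \varphi^*\Omega = \op{Id}$ explicitly rather than citing \cite[Theorem~3.7]{FK18}, but this is the same content.
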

\begin{proof} First, we note that using Lemma~\ref{zero section lemma}, the image under $\varphi_* \circ \Omega$ of $\op{Perf} [\mathcal Z/G] $ is supported on the hyperplane $Z(u_1, \dots, u_r)\subseteq U_{\Sigma, D_i}' $. Note that the partial compactification given by the open immersion $i$ given in ~\eqref{the immersion} does not intersect the hyperplane, thus the image under $i_*$ will be in the subcategory $\dabs{U_{\Psi}, G_{\Psi} \times \Gm, \bar W}_{\op{rel} U_{\Sigma, D_i}' }$.

The claim then follows from the fact that 
\begin{equation}\begin{aligned}\label{cat compo}
\Omega^{-1} \circ \varphi_*& \circ i^* \circ i_*\circ\varphi^* \circ\Omega \\&= 
\Omega^{-1}  \circ\varphi_* \circ \op{Id}_{\dabs{ U_{\Sigma_{-D_1,\dots,-D_r}} \times \Gm^{\Psi(1) \setminus \Sigma_{-D_1, \dots, -D_r}(1)} , S_{\Sigma_{\Psi}} \times \Gm, W}|_{\op{rel } U_\Psi}} \circ \varphi^* \circ \Omega \\ &= \op{Id}_{\op{Perf} [\mathcal Z/G]}.
\end{aligned}\end{equation}
and Definition~\ref{def: CCR}.
\end{proof}
\begin{remark}
Equation ~\eqref{cat compo} implies that ~\eqref{adjoint inclusions 2} is a crepant categorical resolution in the sense of Kuznetsov's definition \cite[Definition 3.2]{Kuz08} as well.
\end{remark}

We then obtain the following corollary when $[\mathcal{Z}/G]$ is smooth.

\begin{corollary}\label{cor CCR 1}
Consider the situation of Theorem~\ref{partial compact CCR} above. Assume further that $[\mathcal{Z}/G]$ is a smooth Deligne-Mumford stack. Then $i_* \circ \varphi^* \circ \Omega$ is a fully faithful functor. 
\end{corollary}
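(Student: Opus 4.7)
The proof should be a short deduction from Theorem~\ref{partial compact CCR} together with the smoothness hypothesis. My plan is as follows.

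First I would recall the general categorical fact: whenever $(F,G)$ is a pair of adjoint functors with $G$ left adjoint to $F$ and the unit $\op{Id} \to F \circ G$ is an isomorphism, then $G$ is fully faithful. Indeed, for any two objects $\mathcal{A},\mathcal{B}$ in the source,
\[
\Hom(G(\mathcal{A}), G(\mathcal{B})) \;\cong\; \Hom(\mathcal{A}, F \circ G(\mathcal{B})) \;\cong\; \Hom(\mathcal{A}, \mathcal{B}),
\]
with the identification given by precomposition with the unit. This is exactly the statement built into Definition~\ref{def: CCR}, combined with the composition computed in equation~\eqref{cat compo} of Theorem~\ref{partial compact CCR}.

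Next I would apply this directly to the crepant categorical resolution produced in Theorem~\ref{partial compact CCR}: taking $G := i_* \circ \varphi^* \circ \Omega$ and $F := \Omega^{-1} \circ \varphi_* \circ i^*$, the hypothesis gives $F \circ G \cong \op{Id}_{\op{Perf}[\mathcal{Z}/G]}$ together with the adjunction, so $G$ is fully faithful when restricted to $\op{Perf}[\mathcal{Z}/G]$.

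Finally, I would invoke smoothness. Since $[\mathcal{Z}/G]$ is a smooth Deligne-Mumford stack, every object of $\dbcoh{[\mathcal{Z}/G]}$ is a perfect complex, i.e.\ $\op{Perf}[\mathcal{Z}/G] = \dbcoh{[\mathcal{Z}/G]}$. Thus the functor $i_* \circ \varphi^* \circ \Omega$ is defined and fully faithful on the entire bounded derived category, which is the claim. There is no real obstacle here; the only point requiring care is to make sure the equivalence $\Omega$ of Theorem~\ref{Orlovs thm} and the stack isomorphism $\varphi$ actually match up at the level of perfect vs.\ coherent objects under the smoothness hypothesis, but this is immediate since $\Omega$ is an equivalence of triangulated categories and $\varphi$ is a stack isomorphism, so both preserve perfectness.
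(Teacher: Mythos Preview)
Your argument is correct and matches the paper's approach: the paper's proof simply cites Theorem~\ref{partial compact CCR} together with the general categorical fact (referenced as Lemma~4.24.4 of \cite{stacks}) that a left adjoint whose unit is an isomorphism is fully faithful, which is exactly what you spell out. Your additional observation that smoothness gives $\op{Perf}[\mathcal{Z}/G] = \dbcoh{[\mathcal{Z}/G]}$ is the implicit step needed to pass from the conclusion of Theorem~\ref{partial compact CCR} to the full derived category.
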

\begin{proof}
This follows directly from Theorem~\ref{partial compact CCR} and Lemma 4.24.4 of~\cite{stacks}.
\end{proof}

Furthermore, we have the following corollary we use later.

\begin{corollary}\label{cor: CCR equiv}
Suppose we are in the situation of Corollary~\ref{cor CCR 1}. Assume further that $\dabs{U_{\Psi}, G_{\Psi} \times \Gm, \bar W}$ is Calabi-Yau and connected. Then $i_* \circ \varphi^* \circ \Omega$ is an equivalence.
\end{corollary}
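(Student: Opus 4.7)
My plan is to upgrade the fully faithful functor from Corollary~\ref{cor CCR 1} to an equivalence by combining its two-sided adjoint structure (from the crepancy in Theorem~\ref{partial compact CCR}) with Serre duality provided by the Calabi-Yau hypothesis. Throughout, abbreviate $F := i_* \circ \varphi^* \circ \Omega$, $F^! := \Omega^{-1} \circ \varphi_* \circ i^*$, $\mathcal{A} := \dbcoh{[\mathcal Z/G]}$, and $\mathcal{B} := \dabs{U_\Psi, G_\Psi \times \Gm, \bar W}$.

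First I would assemble the formal structure: Corollary~\ref{cor CCR 1} gives the unit isomorphism $\id_\mathcal{A} \cong F^! F$, and the crepancy of Theorem~\ref{partial compact CCR} says $F^!$ is both the left and right adjoint of $F$. Hence $F(\mathcal{A})$ is a two-sided admissible subcategory of $\mathcal{B}$, giving a semi-orthogonal decomposition $\mathcal{B} = \langle F(\mathcal{A})^\perp, F(\mathcal{A}) \rangle$ with $F(\mathcal{A})^\perp = \ker F^!$. Letting $K$ denote the cone of the counit $FF^! \to \id_\mathcal{B}$, applying $F^!$ to the defining triangle and using the unit isomorphism shows $F^! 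K \cong 0$, so $K \in F(\mathcal{A})^\perp$. Essential surjectivity of $F$ is therefore equivalent to the vanishing of $F(\mathcal{A})^\perp$.

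Next I would invoke the Calabi-Yau hypothesis on $\mathcal{B}$, so that the Serre functor is a shift $S_\mathcal{B} \cong [n]$ for some integer $n$. Since $F(\mathcal{A})^\perp$ is closed under shifts, Serre duality in $\mathcal{B}$ gives, for any $A \in F(\mathcal{A})$ and $C \in F(\mathcal{A})^\perp$, an isomorphism $\Hom_\mathcal{B}(C, A) \cong \Hom_\mathcal{B}(A, C[n])^* = 0$. This forces $F(\mathcal{A})^\perp \subseteq {}^\perp F(\mathcal{A})$, and the semi-orthogonal decomposition refines to a completely orthogonal direct sum $\mathcal{B} \cong F(\mathcal{A}) \oplus F(\mathcal{A})^\perp$, in which both summands inherit the structure of a smooth proper Calabi-Yau category of dimension $n$.

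The main obstacle is the final step: ruling out $F(\mathcal{A})^\perp \neq 0$. Here I would exploit the specific LG model structure rather than appealing to an abstract indecomposability statement. The key observation is that $\mathcal{B}$ is generated by factorizations whose cohomology is concentrated on the critical locus of $\bar W$, and for $\mathcal{B}$ to be Calabi-Yau, smooth, and proper, the Calabi-Yau hypothesis together with the partial compactification setup forces the critical locus of $\bar W$ modulo $G_\Psi \times \Gm$ to coincide with that of $W$ on the open $U_{\Sigma, D_i}'$; in other words, no new critical points appear on the boundary added by $i$. Lemma~\ref{zero section lemma} then places a classical generating set of $\mathcal{B}$ inside $F(\mathcal{A})$, so $F(\mathcal{A})^\perp = 0$. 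With $K = 0$, the counit is an isomorphism and $F$ is an equivalence.
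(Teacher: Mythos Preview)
Your first three paragraphs are correct and reproduce the standard argument that the Calabi--Yau condition forces any semi-orthogonal decomposition to be completely orthogonal, yielding $\mathcal{B} \cong F(\mathcal{A}) \oplus F(\mathcal{A})^\perp$. The gap is in your final step. The assertion that the Calabi--Yau hypothesis on $\mathcal{B}$ forces the critical locus of $\bar W$ on $U_\Psi$ to agree with that of $W$ on the open piece is unjustified: nothing you have written links a Serre-functor condition on the factorization category to a constraint on where $d\bar W$ vanishes, and no such implication is available anywhere in the paper. Even granting that claim, Lemma~\ref{zero section lemma} only identifies $\op{Perf}[\mathcal{Z}/G]$ with the subcategory of objects supported on the zero section of $\E^\vee$; it says nothing about these objects generating $\mathcal{B}$, so $F(\mathcal{A})^\perp = 0$ does not follow.

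The paper's proof is simply the observation that a Calabi--Yau category is indecomposable \cite[Proposition~5.1]{KuzCY}, which immediately kills $F(\mathcal{A})^\perp$ once $F(\mathcal{A}) \neq 0$. Your paragraphs 1--3 already contain most of that argument; the missing piece is not a critical-locus computation but rather the purely categorical fact that a smooth proper Calabi--Yau category admits no nontrivial completely orthogonal decomposition.
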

\begin{proof}
This follows directly from the fact that a Calabi-Yau category is indecomposable (see, e.g., Proposition 5.1 of \cite{KuzCY}).
\end{proof}

\section{Variation of GIT and the Exoflop}\label{sec: flop}

In the last section, we established a candidate for a crepant categorical resolution for the derived category of a toric complete intersection: the factorization category of a partial compactification of a toric vector bundle equipped with a superpotential extended onto the partial compactification. In the notation above, this is the category
$$
\dabs{U_{\Psi}, G_{\Psi} \times \Gm, \bar W}.
$$
This category is associated to the Cox construction associated to a fan $\Psi$ where $\Psi$ contains the toric vector bundle as a subfan.

An exoflop involves performing some flops after partially compactifying. Examples can be calculated using toric geometry when one uses toric geometric invariant theory and vary the stability parameter. To do so, we need $\mathcal{X}_{\Psi}$ to be a GIT quotient. In the first subsection, we give sufficient criteria for this.

In the toric case, the choices of GIT quotients are parameterized by the secondary fan, which parameterizes the choice of linearization for the GIT quotient. For each elementary wall crossing between maximal chambers in the secondary fan, there are two open sets $U, U' \subseteq \mathbb{A}^{\Psi(1)}$ so that $[U/G_{\Psi}]$ and $[U'/G_{\Psi}]$ are semiprojective. Moreover, there is an established relation between the categories $\dabs{U, G_{\Psi} \times \Gm, \bar W}$ and $\dabs{U', G_{\Psi} \times \Gm, \bar W}$, proven by Ballard-Favero-Katzarkov \cite[Theorem 3.5.2]{BFK} and Halpern-Leistner \cite[Proposition 4.2]{HL}. In certain scenarios, these results are made concrete for the entire secondary fan in \cite[Sections 4 and 5]{FK18}.  The results in loc. cit. make it possible to black box the geometric invariant theory, and we do so here. Since we black box this machinery, the flops may not be as transparent to the reader as they can be.

\subsection{Semiprojectivity of partial compactifications}\label{subsec:convex geometry} In this subsection, we discuss the existence of semiprojective partial compactifications by using regular triangulations and convex geometry. This subsection may at first seem like a digression, but it proves we have a partial compactification that is a GIT quotient. However, since the results of the subsection are true outside of the above context, we rename $N \times \Z^r$ to $N$ temporarily until Corollary~\ref{reg triang}. Said corollary is a strengthening of Lemma~\ref{ptl compactification} and the main result for our purposes in this subsection. 

There are two standard notions of regular triangulation in the literature. We start by reviewing them. Take a finite subset of distinct elements $\nu = \{v_1, \dots, v_r\} \in N_{\mathbb{Q}}$ lying on an integral affine hyperplane $H\subseteq N_{\R}$ with $0 \notin H$. This gives the lattice polytope $Q_\nu = \op{Conv}(v_1, \dots, v_r)\subseteq H$. We will assume that $Q_\nu$ has full dimension in $H$ and that the cone $C_\nu = \op{Cone}(\nu)$ has full dimension in $N_{\R}$ and is strongly convex. The below is a minor variant on the definition of triangulation from that in \cite[\S 15.2]{CLS}, which we follow.

\begin{definition}
A \newterm{triangulation} $\mathcal{T}$ of $\nu$ is a collection of simplices satisfying: 
\begin{itemize}
\item each simplex in $\mathcal{T}$ has codimension 1 in $N_{\R}$ with vertices in $\nu$;
\item the intersection of any two simplices in $\mathcal{T}$ is a face of each;
\item the union of the simplices in $\mathcal{T}$ is $Q_\nu$.
\end{itemize}

\end{definition}
One can define a special class of triangulations, \emph{regular} triangulations, as follows. Given nonnegative weights $\omega = (w_1, \dots, w_r) \in \mathbb{Q}^{r}_{\ge 0}$ (or, equivalently a weight function $w: \nu \to \Q_{\ge0}$) to obtain a cone 
$$
C_{\nu, \omega} := \op{Cone}( (v_1, w_1) \dots, (v_r, w_r)) \subseteq N_{\R} \times \R. 
$$
The lower hull of $C_{\nu, \omega}$ consists of all facets of the cone $C_{\nu, \omega}$ whose inner normal has a positive last coordinate. Projecting the facets in the lower hull and their faces gives a fan $\Sigma_{\omega}$ in $N_{\R}$ such that $|\Sigma_\omega| = C_{\nu}$ and $\Sigma_{\omega} \subseteq \{\op{Cone}(v_i) \ | \ 1 \le i \le r\}$. The fan $\Sigma_\omega$ naturally provides a polyhedral subdivision of the convex hull $Q_\nu$. The following is a variant of Definition 15.2.8 of \cite{CLS} (requiring rational weights $\omega \subseteq \mathbb{Q}^{r}_{\ge 0}$ rather than $\R^r_{\ge 0}$) for when this polyhedral subdivision is a triangulation.

\begin{definition}
A triangulation $\mathcal{T}$ of $\nu$ is \newterm{regular} if there are weights $\omega$ so that $\Sigma_{\omega}$ is simplicial and $\mathcal{T} = \Sigma_{\omega} \cap Q_{\nu}$. 
\end{definition}

There is an alternate definition of regular triangulation  by Hausel and Sturmfels for cones \cite{HS02}. We modify their definitions by adding the word `conical' to avoid confusion with the above.

\begin{definition}
 A \newterm{conical triangulation} of $\nu$ is a simplicial fan $\Sigma$ whose rays have generators in $\nu \subseteq N$. A \newterm{$T$-Cartier divisor} on $\Sigma$ is a continuous function $$ \Phi: C_{\nu} \to \R$$ which is linear on each cone of $\Sigma$ and takes integer values on $N\cap C_{\nu}$. The conical triangulation $\Sigma$ is called \newterm{regular} if there exists a $T$-Cartier divisor $\Phi$ which is ample, i.e., the function $\Phi$ is convex and restricts to a different linear function on each maximal cone of $\Sigma$. 
\end{definition}

\begin{proposition}\label{HS translation}
Suppose one has a regular triangulation of the point configuration $\nu = (v_1, \dots, v_r)$ with weights $\omega =(w_1, \dots, w_r) \in \mathbb{Q}_{\ge0}^r$. Then $\Sigma_\omega$ is a regular conical triangulation of $C_\nu$.
\end{proposition}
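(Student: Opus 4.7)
The aim is to construct an ample $T$-Cartier divisor $\Phi$ on the fan $\Sigma_\omega$ directly from the weight function $\omega$. The natural candidate is the function whose graph is precisely the lower hull of $C_{\nu,\omega}$: for each $v \in C_\nu$, set $\Phi(v)$ to be the unique real number such that $(v, \Phi(v))$ lies on the lower hull of $C_{\nu,\omega}$. Equivalently, $\Phi$ is the unique continuous function $C_\nu \to \R$ that is linear on each maximal cone $\sigma$ of $\Sigma_\omega$ and satisfies $\Phi(v_i) = w_i$ whenever $v_i$ generates a ray of $\sigma$. This is well-defined because $\Sigma_\omega$ is simplicial, so the ray generators of any maximal cone form an $\R$-basis of its linear span; continuity across walls is automatic since adjacent maximal cones share ray generators and the prescribed values agree on them.

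Having constructed $\Phi$, I would verify three properties. First, \emph{convexity}: by construction the graph of $\Phi$ is the lower hull of $C_{\nu,\omega}$, and a lower hull is by definition the graph of a convex function. Second, \emph{strict convexity across walls} (needed for ampleness, so that $\Phi|_\sigma$ differs on each maximal cone $\sigma$): the maximal cones of $\Sigma_\omega$ are in bijection with the facets of the lower hull via the projection $N_\R \times \R \to N_\R$, and distinct facets of a polyhedral cone lie on distinct supporting hyperplanes, so the induced linear functions on $N_\R$ are distinct. Third, \emph{integrality} on $N \cap C_\nu$: since $\omega \in \Q^r_{\geq 0}$, the function $\Phi$ is rational-valued, with denominators on a maximal cone $\sigma$ bounded by the multiplicity of $\sigma$. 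Scaling $\omega$ by a suitable positive integer clears denominators; this scaling does not change $\Sigma_\omega$ and preserves both convexity and strict convexity, so we may pass to an integer-valued $\Phi$.

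\textbf{Main obstacle.} The crux is strict convexity across walls, i.e., the claim that $\Phi$ restricts to \emph{different} linear functions on different maximal cones of $\Sigma_\omega$. This reduces to the combinatorial assertion that no two distinct facets of the lower hull of $C_{\nu,\omega}$ lie on a common supporting hyperplane in $N_\R \times \R$; once granted, pushing down via projection yields the required genuine piecewise-linearity of $\Phi$. The integrality step is a minor technicality handled by rescaling, and the existence of the lower-hull decomposition itself is exactly the data encoded by regularity in the first sense.
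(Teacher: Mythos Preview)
Your proposal is correct and follows essentially the same route as the paper: define $\Phi$ so that its graph is the lower hull of $C_{\nu,\omega}$, observe that this is continuous, convex, linear on each cone of $\Sigma_\omega$, and restricts to a different linear function on each maximal cone, then rescale to achieve integrality on $N\cap C_\nu$. The only cosmetic difference is that the paper phrases the rescaling step via Gordan's lemma applied on each maximal cone, whereas you bound denominators by cone multiplicities; both arguments amount to multiplying the rational piecewise-linear function by a sufficiently divisible positive integer.
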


\begin{proof}
 For any $q \in Q_{\nu}$, there is a unique $w_q \in \mathbb{R}_{>0}$ such that $(q,w_q)$ is in the lower hull of $C_{\nu, \omega}$. Note that if $q \in Q_{\nu} \cap N$ then $w_q \in \mathbb{Q}_{>0}$ as it is a rational linear combination of $w_1, \dots, w_r$. Write $C_{\nu} = \{ a q \ | \ q \in Q_{\nu}, a \in \mathbb{R}_{\ge0}\}$. Then define a map $\Psi: C_{\nu} \to \mathbb{R}$
$$
\Phi: C_{\nu} \to \mathbb{R}_{\ge 0}, \qquad \Psi(aq) = aw_q.
$$
Using the fact that the weights $w_q$ are built from the lower hull of $C_{\nu,\omega}$, one can check this is continuous, convex, linear on each cone of $\Sigma_w$, and restricts to a different linear function on each maximal cone of $\Sigma_w$. One can then use Gordan's lemma on each maximal cone of $\Sigma_w$ to find a constant $D$ so that the function 
$$
\Phi_D: C_{\nu} \to \mathbb{R}_{\ge 0}, \qquad \Psi(aq) =D aw_q.
$$
satisfies the properties above and also takes integer values on $N\cap C_\nu$. 
\end{proof}

\begin{lemma}\label{ExtendRegtriang}
    Consider two finite sets of lattice points $L_0,L_1\subseteq N_{\R}$ and their union $L:=L_0\cup L_1$ such that $\dim\conv(L_0)=\dim\conv(L)$. Let $\mathcal{T}_0$ be a regular triangulation of $L_0$. Then there is a regular triangulation $\mathcal{T}$ of $L$ that contains $\mathcal{T}_0$ in the sense that every simplex $T\in\mathcal{T}_0$ is also contained in $\mathcal{T}$, i.e. $\mathcal{T}_0\subseteq\mathcal{T}$.
\end{lemma}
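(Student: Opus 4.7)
The plan is to extend a weight function $\omega_0:L_0\to\Q_{\ge 0}$ realizing $\mathcal{T}_0$ to a weight function $\omega:L\to\Q_{\ge 0}$ by assigning sufficiently large rational values on $L_1\setminus L_0$, then read off $\mathcal{T}$ from the lower hull of the lifted configuration $\{(v,\omega(v)):v\in L\}\subseteq N_\R\times\R$. For each top-dimensional simplex $T\in\mathcal{T}_0$, let $\ell_T$ denote the unique affine function on $\op{aff}(L_0)$ that agrees with $\omega_0$ on the vertices of $T$; these are precisely the supporting affine functions of the lower hull of $\{(v,\omega_0(v)):v\in L_0\}$, so the height of that lower hull over $x\in T$ is $\ell_T(x)$. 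Pick a rational number
\[
M \; > \; \max_{T\in\mathcal{T}_0,\ v\in L_1\setminus L_0}\ell_T(v),
\]
and define $\omega(v):=\omega_0(v)$ for $v\in L_0$ and $\omega(v):=M$ for $v\in L_1\setminus L_0$.

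By the choice of $M$, for every $v\in L_1\setminus L_0$ the lifted point $(v,M)$ lies strictly above the graph of each $\ell_T$; hence each $\ell_T$ is still below the entire new lifted configuration and therefore still supports its lower hull from below. Since the old lower hull over $\conv(L_0)$ was already attained by the $\ell_T$ on the simplices of $\mathcal{T}_0$, the new lower hull over $\conv(L_0)$ coincides with the old one. Consequently, the restriction of $\Sigma_\omega\cap\conv(L)$ to $\conv(L_0)$ is exactly $\mathcal{T}_0$, while over $\conv(L)\setminus\conv(L_0)$ the polyhedral subdivision is built from new lower faces incorporating the lifts of $L_1\setminus L_0$, which together cover $\conv(L)$.

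To make this subdivision simplicial everywhere, perturb the values $\omega(v)$ for $v\in L_1\setminus L_0$ by small rational amounts, staying strictly above $M$. The non-simplicial weight vectors form a finite union of rational affine subspaces (recording coincidences among lifted facets), so a generic rational perturbation avoids them while preserving $\omega_0$. This yields a regular triangulation $\mathcal{T}:=\Sigma_\omega\cap\conv(L)$ of $L$ whose restriction to $\conv(L_0)$ is still $\mathcal{T}_0$, i.e.\ $\mathcal{T}_0\subseteq\mathcal{T}$. The main obstacle is to balance the two competing requirements on the new weights: largeness, to freeze $\mathcal{T}_0$ over $\conv(L_0)$, and genericity, to produce simpliciality over $\conv(L)\setminus\conv(L_0)$. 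The threshold analysis resolves this because $M$ is determined only by the fixed data $(\omega_0,\mathcal{T}_0,L)$, leaving an open dense set of admissible perturbations inside $(M,\infty)\cap\Q$. A minor subtlety is that any $v\in L_1\setminus L_0$ lying inside $\conv(L_0)$ is assigned weight $M$ and simply does not appear as a vertex of $\mathcal{T}$; this is permitted by the paper's definition of a triangulation of a point configuration, which only requires simplex vertices to come from the configuration.
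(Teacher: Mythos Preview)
Your argument is correct and closely parallels the paper's: both lift the new points high enough that the lower facets over $\conv(L_0)$ are unchanged. Your threshold $M>\max_{T,v}\ell_T(v)$ is exactly the paper's choice $w_1(v)=1+\max_F(c_F-\langle\bar u_F,v\rangle)/\mu_F$ in different notation. The difference is in how simpliciality of the extended subdivision is obtained. The paper proceeds by induction, adjoining one $v\in L_1\setminus L_0$ at a time, and argues directly that each new lower facet containing the single lifted point $(v,w_1(v))$ is a simplex: its remaining vertices lie in the lift of $L_0$ and, were there $\ge d+1$ of them, they would span a facet of $\mathcal{Q}_0$ already, forcing the new facet to coincide with an old one and contradicting that it contains $v$. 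You instead assign a common height $M$ to all new points at once and then perturb to genericity.

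Your perturbation step deserves one more sentence of justification. The non-simplicial loci have positive codimension in the full weight space $\R^L$, but you are only varying inside the slice $\{\omega_0\}\times(M,\infty)^{L_1\setminus L_0}$; a priori some wall could contain that entire slice, and then no perturbation of the new heights alone would help. That does not occur here: any non-simplicial lower facet of $\mathcal{Q}_1$ must contain at least one lifted vertex from $L_1\setminus L_0$, since the subdivision over $\conv(L_0)$ is $\mathcal{T}_0$ and a supporting hyperplane of $\mathcal{Q}_1$ meeting $\ge d+2$ lifted points of $L_0$ would also support $\mathcal{Q}_0\subseteq\mathcal{Q}_1$, contradicting the simpliciality of $\mathcal{T}_0$. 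Hence every relevant coplanarity condition involves some $\omega(v)$ with $v\in L_1\setminus L_0$ and is destroyed by perturbing that coordinate. With this remark your proof is complete; the paper's inductive route buys an explicit weight function with no genericity appeal, while yours is a little shorter once the wall-avoidance is spelled out.
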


\begin{proof}
    We first prove the existence of a regular polyhedral subdivision $\mathcal{S}$ of $L$ containing $\mathcal{T}_0$ in the sense of the lemma, and then we prove that this regular subdivision can be refined into a regular triangulation containing $\mathcal{T}_0$, thus proving the Lemma. 
    
 If $L_1 \subset L_0$, the lemma is trivial. We proceed by induction. Suppose $L_1\setminus L_0 =\{v\}$. Denote the standard basis of $M_{\R}\times \R$ by $e_1, e_2,\dots, e_{d+1}$. By definition, the regular triangulation $\mathcal{T}_0$ of $L_0$ is obtained by projecting the lower facets of a polyhedron $\mathcal{Q}_0=\conv(\{x+w_0(x) e_{d+1}\vert x\in L_0\})\subseteq M_{\R}\times \R$ for some weight function $w_0:L_0\rightarrow \Q_{\ge0}$. To obtain the regular subdivision $\mathcal{S}$ of $L=L_0\cup \{v\}$ which contains $\mathcal{T}_0$, we extend $w_0$ to a weight function $w_1:L\rightarrow \R^+$ and project the lower facets of the polyhedron $\mathcal{Q}_1=\conv(\{x+w_1(x)e_{d+1}\vert x\in L\})\subseteq M_{\R}\times \R$, noting that $\mathcal{Q}_0\subseteq\mathcal{Q}_1$. We distinguish two cases: $v\in\conv(L_0)$ and $v\not\in\conv(L_0)$.
    
  \underline{Case 1: $v\in\conv(L_0)$.} Define the weight function $w_1:L\rightarrow \Q_{\ge0}$ by setting $w_1(v)=1+\max_{x\in L_0}(w_0(x))$ and $w_1(x)=w_0(x)$ otherwise. Since $v \in \conv(L_0)$, there exists weights $\lambda_x \in [0,1]$ for all $x \in L_0$ such that $\sum_{x \in L_0} \lambda_x = 1$ and $\sum_{x \in L_0} \lambda_x x = v$. Thus, for any such collection of weights,
  $$
  w_1(v) > \max_{x\in L_0}(w_0(x)) \ge \sum_{x\in L_0} \lambda_x w_0(x)
  $$
hence $(v,w_1(v))$ is not in any lower facet of the polyhedron $\mathcal{Q}_1$. Thus $\mathcal{S} = \mathcal{T}_0$ is a regular triangulation.

\underline{Case 2: $v\not\in\conv(L_0)$.} First we construct a weight function $w_1$. Fix a lower facet $F$ of $\mathcal{Q}_0$ with inner pointing normal $u_F$. Write $F=\mathcal{Q}_0\cap A_F$ where $A_F$ is the boundary of the supporting halfspace $H_F=\{x\in N_{\R}\times \R\vert \langle u_F, x\rangle \geq c_F\}$ at the facet $F$, so $A_F = \{x\in N_{\R}\times \R\vert \langle  u_F, x \rangle=c_F\}$. Write $\mu_F=\langle u_F, e_{d+1}\rangle >0$ and $\overline{u}_F=u_F-\mu_F e_{d+1}^*$, where $e_{d+1}^*$ is the dual basis vector to $e_{d+1}$ and $\overline{u}_F$ (respectively $\mu_Fe_{d+1}^*$) is the projection of $u_F$ onto the first $d$ coordinates (last coordinate). Define $w_1: L_1 \cup L_0 \to \Q_{\ge0}$ to be
\[
w_1(x) = \begin{cases}w_0(x) & \text{ if $x \in L_0$}; \\  1+\max_{x\in L_0}\frac{c_F-\langle \overline{u}_F, v\rangle}{\mu_F} & \text{ if $x = v$}.\end{cases}
\]

Consider the polyhedron $\mathcal{Q}_1=\conv(\{x+w_1(x)e_{d+1}\vert x\in L\})$. Note \[
\bigcup_{T \text{ a lower facet of }\mathcal{Q}_0} T\subseteq \mathcal{Q}_0\subseteq \mathcal{Q}_1.
\]
 Fix a lower facet $F=\mathcal{Q}_0\cap A_F$ of $\mathcal{Q}_0$. We claim that $F$ is a lower facet of $\mathcal{Q}_1$, i.e. $\mathcal{Q}_1\cap A_F=F$ and $\mathcal{Q}_1\subseteq H_F$.

 Suppose that $\mathcal{Q}_1\not\subseteq H_F$, i.e. there is a point $q\in \mathcal{Q}_1$ such that $q\not\in H_F$. Then $c_F > \langle u_F, q\rangle$ and there are non-negative real numbers $(\lambda_x)_{x\in L_0},\lambda_v$ where $1=\lambda_v+\sum_{x\in L_0}\lambda_x$ such that\[
 q=\lambda_v(v+w_1(v) e_{d+1})+\sum_{x\in L_0}\lambda_x(x+w_1(x)e_{d+1}).
 \]
 We obtain
  \begin{equation}\begin{aligned}\label{not on the hyperplane}
    c_F&>\left\langle u_F, \lambda_v(v+w_1(v)e_{d+1})+\sum_{x\in L_0}\lambda_x(x+w_1(x)e_{d+1})\right\rangle\\
     & =\lambda_v(\langle \overline{u}_F,v\rangle +w_1(v)\mu_F)+\left\langle u_F, \sum_{x\in L_0}\lambda_x(x+w_1(x)e_{d+1})\right\rangle\\
     & > \lambda_v\left( \langle \overline{u}_F,v\rangle+\frac{c_F-\langle \overline{u}_F,v\rangle}{\mu_F}\mu_F\right)+\sum_{x\in L_0}\lambda_xc_F\\
     & =c_F.
\end{aligned} \end{equation}
This is a contradiction, and so $\mathcal{Q}_1\subseteq H_F$.

Suppose, for the point of contradiction, there is a point $q\in(\mathcal{Q}_1\setminus\mathcal{Q}_0)\cap A_F$. Then there is a collection of non-negative real numbers $\{\lambda_x\vert x\in L_0\}\cup \{\lambda_v\}$ such that $\lambda_v+\sum_{x\in L_0}\lambda_x=1$ with $\lambda_v>0$ and $\lambda_v(v+w_1(v)e_{d+1})+\sum_{x\in L_0}\lambda_x(x+w_1(x)e_{d+1})=q$ and
 
 \begin{align*}
     c_F&=\langle u_F, q\rangle \\
     & =\langle u_F, \lambda_v(v+w_1(v)e_{d+1})\rangle +\langle u_F, \sum_{x\in L_0}\lambda_x(x+w_1(x)e_{d+1})\rangle .
 \end{align*}

 For each $x\in L_0$, we have $(x+w_1(x)e_{d+1})\in\mathcal{Q}_0$ so $\langle u_F, x+w_1(x)e_{d+1} \rangle \geq c_F$. Thus we obtain 
 
 \begin{align*}
     c_F& \geq \langle u_F, \lambda_v(v+w_1(v)e_{d+1})\rangle +c_F\sum_{x\in L_0}\lambda_x\\
     &= \lambda_v\langle u_F, v+w_1(v)e_{d+1}\rangle+(1-\lambda_v)c_F;\\
    \lambda_vc_F &  \geq \lambda_v\langle \overline{u}_F + \mu_F e_{d+1}^*, v+w_1(v)e_{d+1}\rangle;\\
 c_F-\langle \overline{u}_F, v\rangle &\geq \langle \mu_F e_{d+1}^*, w_1(v)e_{d+1}\rangle > \frac{c_F-\langle \overline{u}_F, v\rangle}{\mu_F}\mu_F=c_F-\langle \overline{u}_F, v\rangle;
 \end{align*}
 a contradiction. Hence, $(\mathcal{Q}_1\setminus\mathcal{Q}_0)\cap A_F=\emptyset$. Consequently, as $\mathcal{Q}_0\subseteq\mathcal{Q}_1$, we have $\mathcal{Q}_1\cap A_F=\mathcal{Q}_0\cap A_F$ as required. 

 In summary, we have shown that all lower facets of $\mathcal{Q}_0$ are also lower facets of $\mathcal{Q}_1$. Thus, \[
\mathcal{T}_0\subseteq \mathcal{S}.
 \]
 
Suppose  $\mathcal{S}$ is not a triangulation. Then there exist a lower facet $F = \mathcal{Q}_1 \cap A_F$ of $\mathcal{Q}_1$ that is not a simplex.  Recall that $\mathcal{T}_0$ is a triangulation, so $F \notin \mathcal{Q}_0$ and $v \in F$. Write 
$$
F = \op{Conv}(v+ w_1(v)e_{d+1}, (x_j + w_1(x_j) e_{d+1})_{j \in J})
$$
for some $J \subseteq L_0$. Assume $J$ is maximal in the sense that if $x + w_1(x) e_{d+1} \in F$ for some $x \in L_0$ then $x \in J$. Note that $\dim \op{Conv}(L_0) = \dim F \le |J| -1$, as $F$ is not a simplex. This would require there to be a facet $F' \subseteq\mathcal{Q}_0$ given by $F' = \op{Conv} (x_j + w_1(x_j) e_{d+1})_{j \in J}$ such that $ F' = \mathcal{Q}_0 \cap A_F$ and $\dim F = \dim F'$. Since $\mathcal{T}_0$ is a triangulation, $|J|-1 =\dim F' = \dim F= \dim \op{Conv}(L_0)$. This implies that the minimal affine linear subspace containing $ (x_j + w_1(x_j) e_{d+1})_{j \in J}$ must contain $v+ w_1(v)e_{d+1}$, which contradicts the fact that $\mathcal{Q}_0 \subset \mathcal{Q}_1$ (e.g., Equation~\eqref{not on the hyperplane}). 

The induction step now follows easily by iterating the above for each element of $L_1$. 
\end{proof}

The following is a strengthened version of Lemma~\ref{ptl compactification}. Recall the (strictly convex) rational polyhedral cone $\sigma'$ so that $ |\Sigma_{-D_1,\dots,-D_r}| \subseteq \sigma' \subseteq \sigma_{W}^\vee$. 

\begin{corollary}\label{reg triang}
There exists a simplicial fan $\Psi$ with support $\sigma'$ so that $\Sigma_{-D_1, \dots, -D_r}$ is a subfan of $\Psi$ and $X_{\Psi}$ is semiprojective.
\end{corollary}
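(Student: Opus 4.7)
The plan is to strengthen the existence proof of Lemma~\ref{ptl compactification} by tracking regularity throughout, so that the output fan admits an ample $T$-Cartier divisor and thus $X_\Psi$ is semiprojective via the Hausel-Sturmfels criterion encoded in Proposition~\ref{HS translation}. Concretely, I would pass from the conical picture to the hyperplane-slice picture used in Lemma~\ref{ExtendRegtriang}, transport the existing regular triangulation of the slice of $\Sigma_{-D_1,\dots,-D_r}$ across the extension, and cone back off.

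First, since $\sigma'$ is strictly convex and rational polyhedral, I would choose a primitive integral linear functional $\ell: N_{\R} \times \R^r \to \R$ that is positive on $\sigma' \setminus \{0\}$, and set $H := \{\ell = 1\}$. Let $L_0$ be the collection of points at which $H$ meets the rays of $\Sigma_{-D_1,\dots,-D_r}$, and let $\mathcal{T}_0$ be the triangulation of $\op{Conv}(L_0)$ obtained by slicing the (simplicial) cones of $\Sigma_{-D_1,\dots,-D_r}$ by $H$. Now $\mathcal{T}_0$ is in fact \emph{regular}: the toric variety $X_{\Sigma_{-D_1,\dots,-D_r}} = \tot(\oplus_{i=1}^r \O_{X_\Sigma}(-D_i))$ is a vector bundle over the projective simplicial variety $X_\Sigma$, hence is semiprojective, so $\Sigma_{-D_1,\dots,-D_r}$ admits a strictly convex integral piecewise linear support function $\Phi$, and the weights $w_v := \Phi(v)$ for $v \in L_0$ realize $\mathcal{T}_0$ as a regular triangulation in the sense preceding Proposition~\ref{HS translation}.

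Next, choose a finite set of rational points $L_1 \subseteq H \cap \sigma'$ so that $\op{Cone}(L_0 \cup L_1) = \sigma'$; this is possible by rational polyhedrality of $\sigma'$. Because the rays of $\Sigma_{-D_1,\dots,-D_r}$ already include $e_1, \dots, e_r$ together with generators spanning $N_{\R}$, the configuration $L_0$ spans an affine subspace of $H$ of full dimension, so $\dim \op{Conv}(L_0) = \dim \op{Conv}(L_0 \cup L_1)$ and the hypothesis of Lemma~\ref{ExtendRegtriang} is met. That lemma then produces a regular triangulation $\mathcal{T}$ of $L_0 \cup L_1$ with $\mathcal{T}_0 \subseteq \mathcal{T}$. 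Taking $\Psi$ to be the fan obtained by coning off the simplices of $\mathcal{T}$ from the origin yields a simplicial fan with $|\Psi| = \sigma'$ and $\Sigma_{-D_1,\dots,-D_r}$ as a subfan, and Proposition~\ref{HS translation} converts regularity of $\mathcal{T}$ into an ample $T$-Cartier divisor on $\Psi$, so $X_\Psi$ is projective over $\op{Spec} \C[\sigma^{\prime\vee} \cap (M \times \Z^r)]$, i.e., semiprojective.

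The step I expect to require the most care is the regularity of $\mathcal{T}_0$, where I must translate the standard semiprojectivity of the vector bundle total space over a projective toric base into an explicit weight function on the sliced configuration; essentially one restricts a globally strictly convex support function on $\Sigma_{-D_1,\dots,-D_r}$ to $H$. The remaining steps are a matter of assembling the convex-geometric tools already set up in this subsection and do not require any additional input.
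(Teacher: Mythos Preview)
Your proposal is correct and follows essentially the same route as the paper: slice the cones by a hyperplane, feed the resulting point configurations into Lemma~\ref{ExtendRegtriang}, and use Proposition~\ref{HS translation} together with the Hausel--Sturmfels criterion to conclude semiprojectivity. The one place you are more careful than the paper is in justifying that the initial triangulation $\mathcal{T}_0$ (the slice of $\Sigma_{-D_1,\dots,-D_r}$) is \emph{regular}; the paper's proof simply invokes Lemma~\ref{ExtendRegtriang} without checking this hypothesis, whereas you supply it by noting that the total space of a split bundle over the projective base $X_\Sigma$ is semiprojective, hence carries a strictly convex support function whose restriction to $H$ furnishes the weights. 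That is the right way to fill the gap, and the rest of your argument matches the paper's.
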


\begin{proof}
Since $\sigma$ is a polyhedral strictly convex cone in $N_{\R} \times \R^r$, there exists an element $\bar m \in M_{\Q} \times \Q^r$ so that $\sigma \setminus \{0\}$ is contained in the halfspace $H:= \{ \bar n \in N_{\R} \times \R^r \ | \ \langle \bar m, \bar n \rangle>0\}$. Consider the set $L_0 = \{ v \in H_{\bar m}(1) \ | \ v \in \rho, \text{ for some } \rho \in \Sigma_{-D_1, \dots, -D_r}(1)\}$ and take $L_1$ to be the generators of the cone $\sigma$ that are contained in the halfspace $H$. Note $L=L_0 \cup L_1 \in N_{\Q} \times \Q^r$. Then by Lemma~\ref{ExtendRegtriang} we have a regular triangulation of $L$, for some weights $\omega$, yielding a simplicial fan $\Psi = \Sigma_{\omega}$. Thus by Proposition~\ref{HS translation}, the fan $\Psi$ is a regular conical triangulation. By Corollary 2.7 of \cite{HS02}, we obtain that $X_{\Psi}$ is semiprojective.
\end{proof}

\subsection{Variation of GIT Quotients and Gorenstein Cones}
 Corollary~\ref{reg triang} shows there exists a simplicial fan $\Psi$ with support $\sigma'$ so that $X_{\Psi}$ is semiprojective and $\Psi$ contains $\Sigma_{-D_1, \dots, -D_r}$ as a subfan. Recall that we have the global $G_{\Psi(1)}$-invariant function $\bar W$, defined as in ~\eqref{extended potential}, yielding the gauged LG model
$$
(U_\Psi, G_{\Psi(1)} \times \Gm, \bar W)
$$
Since $X_{\Psi}$ is semiprojective, $\Psi$ corresponds to a chamber of a secondary fan corresponding to the point collection given by intersecting a hyperplane with the rays in $\Psi(1)$ (see, e.g., Exercise 15.1.8 of \cite{CLS}). Thus the quotient stack $[U_{\Psi} / G_{\Psi}]$ is a GIT quotient and we can vary the choice of linearization to find other GIT quotients, and provide relationships between their corresponding factorization categories using the technology developed in \cite{BFK, HL}. Such a change will give a birational transformation, yielding the `flop' portion of the exoflop. One can use this technology directly and the above in order to generate results on relations in various cases.

The story simplifies when one restricts to Calabi-Yau complete intersections. In this case, the relationships between the factorization categories of different GIT quotients were streamlined in \cite{FK18} using Gorenstein cones. The following definitions for various variants of Gorenstein cones will be necessary for some of the following statements.

\begin{definition}
Consider a cone $\sigma$ in $N_{\R}$. We say $\sigma$ is 
\begin{enumerate}
    \item \newterm{Gorenstein with respect to $\mathfrak m_\sigma$} if there exists an element $\mathfrak m_\sigma \in M$ so that the cone $\sigma$ is generated over $\Q$ by finitely many lattice points in $\{n \in N \ | \ \langle \mathfrak m_\sigma, n \rangle =1 \}$.
    \item \newterm{$\Q$-Gorenstein with respect to $\mathfrak m_\sigma$} if there exists an element  $\mathfrak m_{\sigma} \in M_{\Q}$ so that the cone $\sigma$ is generated over $\Q$ by finitely many lattice points in $\{n \in N \ | \ \langle \mathfrak m_\sigma, n \rangle =1 \}$.
\end{enumerate}
 If $\sigma$ has primitive lattice generators $v_1, \dots, v_k \in N$, the \newterm{support} $\Delta_\sigma$ of $\sigma$ is the polytope $\op{Conv}(\{v_1, \dots, v_k\})$ in the hyperplane $$H_{\mathfrak m_\sigma}(1) := \{ x \in N_{\R} \ | \ \langle \mathfrak m_{\sigma}, x\rangle=1\}.$$
 We say a Gorenstein cone $\sigma$ is \newterm{reflexive Gorenstein of index $r$} if its dual cone $\sigma^\vee$ is Gorenstein with respect to an element $\mathfrak n_{\sigma^\vee} \in N$ and $\langle \mathfrak m_{\sigma}, \mathfrak n_{\sigma^\vee}\rangle = r$. 
\end{definition}
\begin{remark}
Here it is important to note that we do not require that $\sigma$ is generated over $\Z$ but over $\Q$. This is sometimes called almost Gorenstein and is a weaker assumption.
\end{remark}
\begin{definition}
    Given $r$ lattice polytopes $\Delta_1, \dots, \Delta_r \subseteq M_{\R}$, we define a \newterm{Cayley polytope of length $r$ associated to $\Delta_1, \dots, \Delta_r$} to be the convex hull $\op{Conv}(\Delta_1+e_1, \dots, \Delta_r + e_r) \subseteq M_{\R} \times \R^r$, where $e_i$ is the $i$th standard basis vector of $\R^r$. We say a cone is a \newterm{Cayley cone (of length $r$)} if it is the cone over a Cayley polytope of length $r$. We say a cone is \newterm{completely split} if it is a reflexive Gorenstein cone of index $r$ that is also a Cayley cone of length $r$.
\end{definition}

\noindent Note that if $\sigma$ is $\Q$-Gorenstein and dimension $\dim N_{\R} = d$, then $m_{\sigma}$ is unique. 

\begin{example}
Let $\Sigma$ be a complete fan in $N_{\R}$. Take the fan $\Sigma_{-D_1, \dots, -D_r}$ where $D_i = \sum_{\rho \in \Sigma(1)} a_{\rho i} D_\rho$ and, for each $i$, $a_{\rho i} = \delta_{ij}$ for some $j$ (that is, $\sum_i D_i = -K_{X_{\Sigma}}$ and the $D_i$ partition the anticanonical divisor). Then the support $|\Sigma_{-D_1, \dots, -D_r}|$ is an almost Gorenstein cone in $N_{\R} \times \R^r$ with respect to $m= e_1 +\dots + e_r$. 
\end{example}

Let $\sigma \subseteq N_{\R}$ be a $\Q$-Gorenstein cone and $\nu \subseteq \sigma \cap N$ be a finite, geometric collection of lattice points which contains the (primitive) ray generators of $\sigma$. Partition the set $\nu$ into two subsets
\begin{equation}\begin{aligned}
\nu_{=1} = \{v \in \nu \ | \ \langle m_{\sigma}, v\rangle = 1\} \text{ and }\\
\nu_{\ne 1} = \{ v \in \nu \ | \ \langle m_{\sigma}, v \rangle \ne 1\}. 
\end{aligned}\end{equation}
Note that since $\sigma$ is $\Q$-Gorenstein, the ray generators of $\sigma$ are contained in $\nu_{=1}$. 

We consider the fan $\Psi$ above. Suppose the cone $|\Psi|$ is $\Q$-Gorenstein. Then we have the following result.
\begin{theorem}[Theorem 5.8 of \cite{FK18}]\label{Thm:5.8inFK18}
Let $\Psi$ be any simplicial fan such that $\Psi(1) = \{ \op{Cone}(v) \ | \ v \in \nu\}$ and $X_{\Psi}$ is semiprojective. Similarly, let $\tilde \Sigma$ be any simplicial fan such that $\tilde\Sigma(1) \subseteq \nu_{=1}$, $X_{\tilde \Sigma}$ is semiprojective and $\op{Cone}(\tilde{\Sigma}(1)) = |\Psi|$. We have the following:
\begin{enumerate}
\item[(a)] If $\langle m_{\sigma}, a\rangle > 1$ for all $a \in \nu_{\ne 1}$, then there is a fully-faithful functor
$$
\dabs{U_{\tilde \Sigma} \times \Gm^{\nu \setminus \tilde\Sigma(1)}, G_{\Psi} \times \Gm, \bar W} \to \dabs{U_{\Psi}, G_{\Psi} \times \Gm, \bar W}.
$$
\item[(b)] If $\langle m_{\sigma}, a\rangle < 1$ for all $a \in \nu_{\ne 1}$, then there is a fully-faithful functor
$$
\dabs{U_{\Psi}, G_{\Psi} \times \Gm, \bar W} \to \dabs{U_{\tilde \Sigma} \times \Gm^{\nu \setminus \tilde\Sigma(1)}, G_{\Psi} \times \Gm, \bar W}.
$$
\item[(c)] If $\nu_{\ne 1} = \varnothing$, then there is an equivalence
$$
\dabs{U_{\tilde \Sigma} \times \Gm^{\nu \setminus \tilde\Sigma(1)}, G_{\Psi} \times \Gm, \bar W} \cong \dabs{U_{\Psi}, G_{\Psi} \times \Gm, \bar W}.
$$
\end{enumerate}
\end{theorem}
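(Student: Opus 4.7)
The plan is to apply the variation of GIT machinery for gauged Landau-Ginzburg models, due to Ballard-Favero-Katzarkov and Halpern-Leistner, to the ambient affine space $\mathbb{A}^{\nu}$ equipped with the action of $G_{\Psi}$. Both stacks $[U_{\Psi}/G_{\Psi}]$ and $[(U_{\tilde\Sigma}\times \Gm^{\nu\setminus\tilde\Sigma(1)})/G_{\Psi}]$ are semiprojective GIT quotients of $\mathbb{A}^{\nu}$ by $G_{\Psi}$, corresponding to two maximal chambers of the secondary fan associated with the point configuration $\nu$. The semiprojectivity hypothesis, together with the assumption that $|\Psi|=\operatorname{Cone}(\tilde\Sigma(1))$, ensures these chambers lie in the same connected component of the moving cone, so they can be connected by a sequence of elementary wall-crossings.

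Next, I would analyze a single such elementary wall-crossing. To each wall corresponds a destabilizing one-parameter subgroup $\lambda$ whose fixed locus is determined by the ray being added or removed between the two triangulations. The key numerical invariant is the weight of $\bar W$ under $\lambda$ compared with the weight of $\lambda$ on the canonical bundle of $\mathbb{A}^{\nu}$; the BFK/HL window theorem translates this discrepancy into the size and direction of a window of semi-orthogonal summands. In the toric setting here, the anticanonical weight of each coordinate corresponds to the pairing $\langle m_{\sigma},v\rangle$ with the associated lattice point $v\in\nu$: rays $v\in\nu_{=1}$ contribute discrepancy zero, i.e.\ they are the "Calabi-Yau directions", while extra rays in $\nu_{\neq 1}$ contribute nonzero discrepancy whose sign is controlled by whether $\langle m_{\sigma},a\rangle>1$ or $<1$.

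Combining these analyses, the three statements follow: when every extra ray satisfies $\langle m_{\sigma},a\rangle>1$, every elementary wall-crossing produces a semi-orthogonal decomposition in which the smaller piece embeds fully faithfully as in (1); the opposite inequality flips the sign of the discrepancy at every wall and gives the embedding of (2); and if $\nu_{\neq 1}=\varnothing$, all crossings are crepant, producing the equivalence of (3). The trivial factors $\Gm^{\nu\setminus\tilde\Sigma(1)}$ appear precisely because the fan $\tilde\Sigma$ uses only the level-one rays, so the quotient by $G_{\Psi}$ treats the remaining coordinates as invertible units and their contribution to the factorization category is transparent.

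The main obstacle is the careful bookkeeping of weights. One must verify that $\bar W$ is genuinely $\lambda$-semi-invariant of the expected degree for each destabilizing one-parameter subgroup that arises; that the $R$-charge $\Gm$-factor, which acts with weight one on the fiber coordinates $u_i$ and trivially elsewhere, interacts with $\lambda$ in the way demanded by the BFK/HL window theorem; and that the combinatorial discrepancy $\langle m_{\sigma},a\rangle-1$ for $a\in\nu_{\neq 1}$ matches precisely the window width in loc.\ cit. Once these numerical data are aligned, the theorem reduces directly to \cite{BFK, HL}, along the lines of \cite[Theorem 5.8]{FK18}.
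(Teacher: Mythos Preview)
The paper does not give its own proof of this statement: it is quoted verbatim as Theorem~5.8 of \cite{FK18} and used as a black box. Your proposal is a reasonable sketch of the argument that \cite{FK18} actually carries out (VGIT wall-crossing via \cite{BFK,HL}, with the discrepancy at each wall governed by $\langle m_\sigma,a\rangle-1$), so in that sense your approach matches the source of the cited result rather than anything in the present paper.
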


Using Theorem~\ref{Thm:5.8inFK18}, we are left to study the category 
$$
\dabs{U_{\tilde \Sigma} \times \Gm^{\nu \setminus \tilde\Sigma(1)}, G_{\Psi} \times \Gm, \bar W}.
$$
With some additional assumptions, we can prove that it is geometric. The following is half of \cite[Corollary 5.15]{FK18}.
\begin{proposition}\label{prop: regeometrize}
Take a fan $\tilde\Sigma$ as in Theorem~\ref{Thm:5.8inFK18}. Suppose there exist rays $\rho'_1, \dots, \rho'_r \in \tilde\Sigma(1)$ with primitive generators $e_{\rho'_1}, \dots, e_{\rho'_r} \in N \times \Z^r$ so that

\begin{enumerate}
\item  the induced projection
$$
\pi: N\times \Z^r \to (N\times \Z^r) / (\oplus_{i=1}^t \Z \cdot e_{\rho'_i})
$$
induces the toric morphism $\pi:  X_{\Sigma'_{-D_1', \dots, -D_r'}} \to X_{\Sigma'}$ and this toric morphism is a rank $r$ vector bundle whose sheaf of sections is $\oplus_{i=1}^r \mathcal{O}_{X_{\Sigma'}}(-D_i')$, and 
\item $e_{\rho'_1}+ \dots + e_{\rho'_r}= e_1 + \dots + e_r \in \N \times\Z^r$. 
\end{enumerate}
Write the function $\bar W$ as
$$
\bar W = \sum_{\bar m \in \Xi_{i,W}}  c_m \prod_{\rho \in \tilde\Sigma(1)} x_\rho^{\langle \bar m, u_{\rho}\rangle}.
$$
Then, there exists a partition $\Xi_{i,W} = H_1' \cup \dots \cup H_r'$ so that we can write
$$
\bar W = u_1'g_1 + \dots + u_r' g_r, \text{ where } g_i = \sum_{\bar m \in H_i'} \prod_{\rho \in \tilde\Sigma(1)\setminus\{\rho'_1, \dots, \rho_r'\}} x_\rho^{\langle \bar m, u_\rho\rangle}
$$
where $g_i \in \Gamma(X_{\Sigma'}, \mathcal{O}_{X_{\Sigma'}}(D_i'))$. Then we have the quotient stack 
\begin{equation}\label{eq: 2ndCI}
[\mathcal Z'/G']:= [Z(g_1, \dots, g_r) / S_{\tilde\Sigma(1)}] \subseteq [U_{\tilde \Sigma}/S_{\Sigma}]
\end{equation} where
$$
\dabs{U_{\tilde \Sigma} \times \Gm^{\nu \setminus \tilde\Sigma(1)}, G_{\Psi} \times \Gm, \bar W} \cong \dbcoh{[\mathcal Z'/G']}.
$$
\end{proposition}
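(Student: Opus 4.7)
The plan is to reinterpret $\bar W$ as the Koszul-type pairing with a section of a rank-$r$ bundle, and then invoke Corollary~\ref{cor: Hirano} to get the derived category of a complete intersection. First, using hypothesis (1) together with Proposition~\ref{Prop:FK14Prop5.16}, I would identify $[U_{\tilde\Sigma}/S_{\tilde\Sigma(1)}]$ (or, equivalently, the open substack picked out inside the enlarged quotient $[U_{\tilde\Sigma}\times \Gm^{\nu\setminus\tilde\Sigma(1)}/G_\Psi]$) with $\operatorname{tot}\bigl(\bigoplus_{i=1}^r \O_{\mathcal{X}_{\Sigma'}}(-D_i')\bigr)$, where the $u_i' := x_{\rho'_i}$ serve as the fiber coordinates and the $\Gm$ $R$-charge acts by fiberwise dilation. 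This reduces the combinatorial question about $\bar W$ on $U_{\tilde\Sigma}$ to a question about monomials on the toric bundle.

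Next, I would produce the claimed partition of $\Xi_W$ using hypothesis (2). Since $\bar W \in \Gamma(U_\Psi,\O(\chi))^{G_\Psi\times\Gm}$ has $R$-charge $1$, each monomial $\bar m$ appearing in $\bar W$ pairs to $1$ with $m_\sigma = e_1+\dots+e_r$; applying hypothesis (2) gives
\[
\sum_{i=1}^{r} \langle e_{\rho'_i},\bar m\rangle \;=\; \langle e_1+\dots+e_r,\bar m\rangle \;=\; 1.
\]
Each summand is a nonnegative integer because $\bar m$ lies in the dual cone to $|\Psi|$ and each $e_{\rho'_i}$ is a ray generator of $\Psi$. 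Consequently, for every $\bar m\in\Xi_W$ there is a unique index $i(\bar m)$ with $\langle e_{\rho'_{i(\bar m)}},\bar m\rangle=1$ and $\langle e_{\rho'_j},\bar m\rangle=0$ for $j\neq i(\bar m)$. Defining $H_i' := \{\bar m\in\Xi_W \mid i(\bar m)=i\}$ yields the desired partition, and after factoring the single $u_i'$ out of each monomial in $H_i'$ the remainder involves only coordinates $x_\rho$ with $\rho\in\tilde\Sigma(1)\setminus\{\rho'_1,\dots,\rho'_r\}$. By the bundle description of Example~\ref{toric setting} applied in reverse to the toric morphism $\pi$, these remainder monomials are precisely the toric sections of $\O_{\mathcal{X}_{\Sigma'}}(D_i')$, so the $g_i$ are well-defined global sections and $\bar W = \sum_i u_i' g_i$ is the pairing of $(g_1,\dots,g_r)\in\Gamma(\mathcal{X}_{\Sigma'},\oplus \O(D_i'))$ with the tautological section.

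Finally, I would conclude by applying Corollary~\ref{cor: Hirano} to the gauged LG model $\bigl(U_{\Sigma'_{-D_1',\dots,-D_r'}},\, S_{\Sigma'_{-D_1',\dots,-D_r'}(1)}\times\Gm,\,\sum u_i'g_i\bigr)$, which yields the equivalence with $\dbcoh{[\mathcal{Z}'/G']}$ for $\mathcal{Z}'=Z(g_1,\dots,g_r)$ as in~\eqref{eq: 2ndCI}. The main obstacle I anticipate is the bookkeeping of groups: one must check that the enlarged quotient $[U_{\tilde\Sigma}\times\Gm^{\nu\setminus\tilde\Sigma(1)}/G_\Psi]$ is canonically isomorphic to $[U_{\Sigma'_{-D_1',\dots,-D_r'}}/S_{\Sigma'_{-D_1',\dots,-D_r'}(1)}]$, so that the extra $\Gm$ factors and the passage from $G_\Psi$ to the Cox group of the bundle cancel cleanly (via the descent/flat-quotient argument underlying Corollary 4.23 of \cite{FK18}). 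Once that identification is in place, the $R$-charge and superpotential transport across unchanged, and the equivalence follows directly from Hirano's theorem.
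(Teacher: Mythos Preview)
Your argument is correct and is essentially the proof that underlies \cite[Corollary~5.15]{FK18}, which is all the paper invokes here: the paper does not supply its own proof of Proposition~\ref{prop: regeometrize} but simply attributes it to that reference. Your two key steps\textemdash using hypothesis~(2) together with nonnegativity of the pairings $\langle \bar m, e_{\rho'_i}\rangle$ to force a unique bundle coordinate to appear linearly in each monomial, and then identifying the resulting expression $\sum u_i' g_i$ as the tautological pairing on $\operatorname{tot}\bigl(\bigoplus \O_{\mathcal{X}_{\Sigma'}}(-D_i')\bigr)$ so that Hirano's theorem applies\textemdash are exactly the mechanism.

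One small notational slip: the element you call $m_\sigma$ is not the $\Q$-Gorenstein element of Theorem~\ref{Thm:5.8inFK18} (which lives in $M_\Q$), but rather $\mathfrak{n}=e_1+\dots+e_r\in N\times\Z^r$, the vector encoding the $R$-charge; the pairing $\langle \bar m,\mathfrak{n}\rangle=1$ holds because $\Xi_W\subseteq H_{\mathfrak n}(1)$. Also, your nonnegativity step uses that $\Xi_W\subseteq\sigma_W\subseteq|\Psi|^\vee$ and that the $e_{\rho'_i}$ lie in $|\Psi|$ via $\tilde\Sigma(1)\subseteq\nu$; you have this right but it is worth stating explicitly. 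The group bookkeeping you flag is indeed handled by \cite[Corollary~4.23]{FK18}, and once that stack isomorphism is in place the reduction to Corollary~\ref{cor: Hirano} is immediate.
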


\noindent Note that if $[\mathcal Z'/G']$ is smooth, then $\dbcoh{[\mathcal Z'/G']}$ is homologically smooth and proper. Also, if $[\mathcal Z'/G']$ is a Calabi-Yau orbifold, $\dbcoh{[\mathcal Z'/G']}$ is Calabi-Yau. These observations are useful for satisfying the hypotheses of Theorem~\ref{partial compact CCR} and Corollary~\ref{cor: CCR equiv}, respectively.

\subsection{Categorical ramifications of the exoflop}\label{subsec:summary}
In Section~\ref{sec: exo}, we established the following diagram for when one partially compactifies the gauged Landau-Ginzburg model corresponding to a complete intersection in a toric variety:

\[ \begin{tikzcd}
\dabs{ U_{\Sigma_{-D_1,\dots,-D_r}}, S_{\Sigma_{-D_1,\dots,-D_r}(1)} \times \Gm, W} \arrow[r, shift left, "i_* \circ \varphi^*"]  & \dabs{U_{\Psi}, G_{\Psi} \times \Gm, \bar W}  \arrow[l, shift left, "\varphi_*\circ i^*"]  \\
\dbcoh{[\mathcal{Z}/G]}   \arrow[swap]{u}{\Omega} & 
\end{tikzcd}
\]
Here,  $\Omega$ is the equivalence in Corollary~\ref{cor: Hirano} and $i_* \circ \varphi^*$ 
\begin{itemize}
\item[(i)] with $\varphi_*\circ i^*$ forms a crepant categorical resolution if $\dabs{U_{\Psi}, G_{\Psi} \times \Gm, \bar W}$ is homologically smooth and proper (Theorem~\ref{partial compact CCR});
\item[(ii)] is a fully faithful functor if  $\dabs{U_{\Psi}, G_{\Psi} \times \Gm, \bar W}$ is homologically smooth and proper and $\dbcoh{[\mathcal{Z}/G]} $ is a smooth Deligne-Mumford stack (Corollary~\ref{cor CCR 1}); and
\item[(iii)] is an equivalence if  $\dabs{U_{\Psi}, G_{\Psi} \times \Gm, \bar W}$ is homologically smooth, proper, connected, and Calabi-Yau, and $\dbcoh{[\mathcal{Z}/G]} $ is a smooth Deligne-Mumford stack (Corollary~\ref{cor: CCR equiv}).
\end{itemize}

In this section, we established a chain of relations:
\[ \begin{tikzcd}
\dabs{U_{\Psi}, G_{\Psi} \times \Gm, \bar W} \arrow[r, leftrightarrow, dashed, "Thm.~\ref{Thm:5.8inFK18}"]& \dabs{U_{\tilde \Sigma} \times \Gm^{\nu \setminus \tilde\Sigma(1)}, G_{\Psi} \times \Gm, \bar W} \arrow[d, "Prop.~\ref{prop: regeometrize}"] \\
& \dbcoh{[\mathcal Z'/G']}
\end{tikzcd}
\]
Putting together, we have the following corollary that summarizes the exoflop's power:
\begin{corollary}\label{cor: exoflop}
\begin{itemize}
\item[(a)] If the conditions of Proposition~\ref{prop: regeometrize} hold, we are in either cases (a) or (c) of Theorem \ref{Thm:5.8inFK18}, and $[\mathcal Z'/G']$ is smooth, then we have that $\dabs{U_{\Psi}, G_{\Psi} \times \Gm, \bar W}$ is homologically smooth and proper, hence $i_* \circ \varphi^*\circ\Omega$ and $\Omega^{-1}\circ \varphi_*\circ i^*$ form a categorical resolution for $\dbcoh{[\mathcal{Z}/G]}$.
\item[(b)] If the conditions of Proposition~\ref{prop: regeometrize} hold, we are in either cases (b) or (c) of Theorem \ref{Thm:5.8inFK18}, and $[\mathcal Z/G]$ and $[\mathcal Z'/G']$ are smooth, then we have a fully faithful functor
$$
\dbcoh{[\mathcal Z/G]} \rightarrow \dbcoh{[\mathcal Z'/G']}.
$$
\item[(c)] If the conditions of Proposition~\ref{prop: regeometrize} hold, we are in case (c) of Theorem \ref{Thm:5.8inFK18}, $[\mathcal Z/G]$ is smooth, and $[\mathcal Z'/G']$ is a smooth, connected Calabi-Yau orbifold, then we have an equivalence
$$
\dbcoh{[\mathcal Z/G]} \stackrel{\sim}{\rightarrow} \dbcoh{[\mathcal Z'/G']}.
$$
\end{itemize}
\end{corollary}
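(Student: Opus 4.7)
The strategy is to concatenate the functors we have already produced and read off which links are equivalences and which are merely fully faithful. The central chain relates $\dbcoh{[\mathcal{Z}/G]}$ to $\dbcoh{[\mathcal Z'/G']}$ through the factorization category $\dabs{U_\Psi, G_\Psi \times \Gm, \bar W}$ via, on the left, Corollary~\ref{cor: Hirano} (the equivalence $\Omega$) together with the partial-compactification functor $i_* \circ \varphi^*$ from Theorem~\ref{partial compact CCR} or Corollary~\ref{cor CCR 1}, and on the right, Theorem~\ref{Thm:5.8inFK18} followed by Proposition~\ref{prop: regeometrize}. Each part of the corollary then becomes a matter of tracing which links are equivalences, which are fully faithful, and in which direction they point.

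For part (c), Theorem~\ref{Thm:5.8inFK18}(3) and Proposition~\ref{prop: regeometrize} together yield an equivalence $\dabs{U_\Psi, G_\Psi \times \Gm, \bar W} \simeq \dbcoh{[\mathcal Z'/G']}$. Since $[\mathcal Z'/G']$ is a smooth Calabi-Yau orbifold, the right-hand side is homologically smooth, proper, and Calabi-Yau, and these properties transfer across the equivalence to the central factorization category. Corollary~\ref{cor: CCR equiv} then upgrades $i_* \circ \varphi^* \circ \Omega$ to an equivalence, and composition with the earlier equivalence delivers the desired equivalence $\dbcoh{[\mathcal{Z}/G]} \simeq \dbcoh{[\mathcal Z'/G']}$.

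For part (a), case (3) of Theorem~\ref{Thm:5.8inFK18} is handled exactly as above: the central category inherits smoothness and properness from $\dbcoh{[\mathcal Z'/G']}$, and Theorem~\ref{partial compact CCR} produces the crepant categorical resolution. In case (1), Theorem~\ref{Thm:5.8inFK18}(1) composed with Proposition~\ref{prop: regeometrize} provides a fully faithful embedding $\dbcoh{[\mathcal Z'/G']} \hookrightarrow \dabs{U_\Psi, G_\Psi \times \Gm, \bar W}$; to conclude that the ambient category is homologically smooth and proper, I would appeal to the semiorthogonal decomposition implicit in the VGIT window construction of \cite{FK18} underlying Theorem~\ref{Thm:5.8inFK18}, whose complementary components are also pulled back from geometric data smooth and proper over the base. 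Smoothness and properness of each component then assemble into the corresponding properties of the central category, and Theorem~\ref{partial compact CCR} yields the crepant categorical resolution. For part (b), Theorem~\ref{Thm:5.8inFK18}(2) or (3) supplies a fully faithful functor $\dabs{U_\Psi, G_\Psi \times \Gm, \bar W} \hookrightarrow \dabs{U_{\tilde\Sigma} \times \Gm^{\nu \setminus \tilde\Sigma(1)}, G_\Psi \times \Gm, \bar W} \simeq \dbcoh{[\mathcal Z'/G']}$, while smoothness of $[\mathcal{Z}/G]$ together with Corollary~\ref{cor CCR 1} yields a fully faithful functor $\dbcoh{[\mathcal{Z}/G]} \hookrightarrow \dabs{U_\Psi, G_\Psi \times \Gm, \bar W}$; the composition is the required embedding.

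The main obstacle is the smoothness/properness input for the central factorization category $\dabs{U_\Psi, G_\Psi \times \Gm, \bar W}$ in case (1) of Theorem~\ref{Thm:5.8inFK18}, where a fully faithful embedding from a smooth proper category is not on its own enough. I anticipate that the explicit VGIT window description in \cite{FK18} provides a semiorthogonal decomposition whose components each inherit the requisite finiteness properties from $\dbcoh{[\mathcal Z'/G']}$, so that the properties descend to the ambient category. Everything beyond this point is essentially bookkeeping of adjunctions and compositions of functors already constructed.
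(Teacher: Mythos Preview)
Your approach is exactly the intended one: the paper offers no proof of this corollary beyond the two summary diagrams preceding it, and your unpacking is the natural concatenation of those arrows. Part (c) is handled correctly, and your identification of the obstacle in part (a), case (1) of Theorem~\ref{Thm:5.8inFK18}, together with your proposed resolution via the full semiorthogonal decomposition underlying the VGIT wall-crossing in \cite{FK18}, is on the mark; the complementary window components are themselves geometric and inherit the required finiteness.

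There is, however, a parallel gap in your part (b) that you do not flag. In case (2) of Theorem~\ref{Thm:5.8inFK18} you obtain a fully faithful functor \emph{from} $\dabs{U_\Psi, G_\Psi\times\Gm, \bar W}$ \emph{into} $\dbcoh{[\mathcal Z'/G']}$, and then you invoke Corollary~\ref{cor CCR 1} to get $\dbcoh{[\mathcal Z/G]}\hookrightarrow \dabs{U_\Psi, G_\Psi\times\Gm, \bar W}$. But Corollary~\ref{cor CCR 1} sits under the standing hypothesis of Theorem~\ref{partial compact CCR}, namely that the central factorization category is homologically smooth and proper, and you have not verified this in case (2). The fix is immediate: the VGIT functor in Theorem~\ref{Thm:5.8inFK18}(2) realizes the central category as an admissible subcategory of $\dbcoh{[\mathcal Z'/G']}$, and an admissible subcategory of a smooth proper dg category is again smooth and proper. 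Alternatively, one can observe that the fully faithfulness in Corollary~\ref{cor CCR 1} is actually extracted from the identity~\eqref{cat compo}, which does not use smoothness or properness of the target; that hypothesis is only needed to match Definition~\ref{def: CCR}. Either route closes the gap.
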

There are many conditions at play in Corollary~\ref{cor: exoflop}. However, to illustrate its power, we describe in the next section combinatorial sufficient conditions using reflexive completely split Gorenstein cones.

\section{Exoflops for CICYs}\label{sec: Cicy}

In the previous sections, we aimed to provide general results. In this section, we specialize to the case of Calabi-Yau complete intersections (CICYs) in toric Fano varieties. We provide combinatorial context for when one can use Corollary~\ref{cor: exoflop}(a) and (c). We recall notation from above, but will additional assumptions in our set-up.

Let $X_{\Sigma}$ be a toric projective Fano variety and let $D_1, \dots, D_r$ be torus-invariant Weil divisors so that we can write
$$
D_{i} = \sum_{\rho \in \Sigma(1)} a_{\rho i } D_\rho
$$
where $a_{\rho i} = \delta_{ij}$ for some $j \in \{1, \dots, r\}$. Note $\sum_{i=1}^r D_i = -K_{X_{\Sigma}}$. Consider the toric fan $\Sigma_{-D_1, \dots, -D_r}$ as above. Note $|\Sigma_{-D_1, \dots, -D_r}|$ is almost Gorenstein with respect to 
\begin{equation}
\m := e_1^* + \dots + e_r^* \in M \times \Z^r.
\end{equation}
Write $\mathfrak{n} = e_1 + \dots + e_r$. 

Let $f_i \in \Gamma(X_{\Sigma}, \mathcal{O}_{X_\Sigma}(D_i))$ be a non-zero global section of $D_i$. We can write
\begin{equation}\label{eq:W}
W = u_1f_1 + \dots + u_r f_r = \sum_{\bar m \in H_{\mathfrak{n}}(1)\cap (M\times\Z^r)\cap |\Sigma_{-D_1, \dots, -D_r}|^\vee} c_{\bar m} \prod_{\rho \in \Sigma_{-D_1, \dots, -D_r}(1)} x_\rho^{\langle \bar m, u_\rho\rangle}
\end{equation}
for some $c_{\bar m} \in \C$. Define the set and cone
\begin{equation}\label{sigma w}
\Xi_W := \{ \bar m \in  H_{\mathfrak{n}}(1)\cap (M\times\Z^r)\cap |\Sigma_{-D_1, \dots, -D_r}|^\vee \ | \ c_{\bar m} \ne 0\}; \qquad \sigma_W := \op{Cone}(\Xi_W).
\end{equation}
 As above, take a cone $\sigma'$ so that $\sigma_W^\vee\supseteq \sigma' \supseteq |\Sigma_{-D_1, \dots, -D_r}|$. 

\begin{assumption}\label{conical assumption}
We assume the following: 
\begin{enumerate}
\item[(i)] The cone $\sigma'$ is almost Gorenstein with respect to $\m$.
\item[(ii)] There exists a fan  $\Sigma'_{-D_1', \dots, -D_r'}$ with support $|\Sigma'_{-D_1', \dots, -D_r'}| = \sigma'$ so that:
\begin{itemize}
\item for any primitive generator $u_{\rho'}$ of a ray $\rho' \in \Sigma'_{-D_1', \dots, -D_r'}(1)$ we have $\langle \m, u_{\rho'}\rangle =1$;
\item there exists rays $\rho_1', \dots, \rho'_r \in \Sigma'_{-D_1', \dots, -D_r'}(1)$ so that 
\begin{itemize}
\item $u_{\rho'_1} + \dots + u_{\rho'_r} = \mathfrak{n}$, i.e., $\sigma'$ is a Cayley cone associated to $r$ lattice polytopes and 
\item the projection $\pi: N\times \Z^r \to N\times \Z^r / (\oplus_{i=1}^r \Z \cdot u_{\rho'_i})$ induces a toric morphism  $\pi:  X_{\Sigma'_{-D_1', \dots, -D_r'}} \to X_{\Sigma'}$ for some fan $\Sigma'$ in $N_{\R}\times \R^r / (\oplus_{i=1}^r \R \cdot u_{\rho'_i})$ corresponding to a toric Fano variety $X_{\Sigma'}$ and this toric morphism is a rank $r$ vector bundle whose sheaf of sections is $\oplus_{i=1}^r \mathcal{O}_{X_{\Sigma'}}(-D_i')$.
\end{itemize}
\end{itemize}
\end{enumerate}
\end{assumption}
We denote by $v_1, \dots, v_r$ the variables corresponding to the rays $\rho_1', \dots, \rho'_r$. Since each monomial of $\bar W$ is of the form $x^{\bar m}$, $\langle \bar m, \mathfrak{n}\rangle=1$, and $\langle \bar m, u_{\rho'_i}\rangle \in \Z_{\ge 0}$, we can write the extended global function as
$$
\bar W = \sum_{\bar m \in \Xi_W} c_{\bar m} \prod_{\rho' \in \Sigma'_{-D_1', \dots, -D_r'}(1)} x_\rho^{\langle \bar m, u_{\rho'}\rangle} = v_1 g_1 + \dots + v_r g_r,
$$
where $g_i \in \Gamma(X_{\Sigma'}, \mathcal{O}_{X_{\Sigma'}}(D_i'))$. We then have $
[\mathcal Z'/G']:= [Z(g_1, \dots, g_r) / S_{\tilde\Sigma(1)}] $ as in ~\eqref{eq: 2ndCI}. 

\begin{corollary}\label{2 CICYs} Suppose Assumption~\ref{conical assumption} holds and take $[\mathcal Z/G], [\mathcal Z'/G']$ defined above. Then
\begin{itemize}
\item[(i)] If $[\mathcal Z'/G']$ is smooth, then we have a crepant categorical resolution 
\begin{align*}
F&:\dbcoh{[\mathcal Z'/G']} \to \dbcoh{[\mathcal Z/G]},   \\
G&:\op{Perf} [\mathcal Z/G] \to \dbcoh{[\mathcal Z'/G']}.
\end{align*}
\item[(ii)] If both $[\mathcal Z/G]$ and $[\mathcal Z'/G']$ are smooth and connected, then they are derived equivalent.
\end{itemize}
\end{corollary}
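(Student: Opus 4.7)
The plan is to chain together the machinery of Sections~\ref{sec: exo} and~\ref{sec: flop}, using Assumption~\ref{conical assumption} precisely to land in case (3) of Theorem~\ref{Thm:5.8inFK18} and in the hypotheses of Proposition~\ref{prop: regeometrize}; in effect this specializes Corollary~\ref{cor: exoflop} to a combinatorial setting where every favourable case holds at once. The first step is to invoke Corollary~\ref{reg triang} to produce a semiprojective simplicial fan $\Psi$ supported on $\sigma'$ and containing $\Sigma_{-D_1,\dots,-D_r}$ as a subfan, arranged so that every ray of $\Psi$ lies on the affine hyperplane $H_\m(1) = \{\bar n : \langle \m,\bar n\rangle = 1\}$. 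This is possible because the primitive generators of $\Sigma_{-D_1,\dots,-D_r}(1)$ sit at level $1$ (since $\sum_i D_i = -K_{X_\Sigma}$) and those of $\Sigma'_{-D_1',\dots,-D_r'}(1)$ sit at level $1$ by Assumption~\ref{conical assumption}(ii); feeding both collections into Lemma~\ref{ExtendRegtriang} produces a regular triangulation whose vertices are all at level $1$, which Proposition~\ref{HS translation} then repackages as $\Psi$.

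With $\nu = \Psi(1) \subseteq H_\m(1)$ and $\tilde\Sigma = \Sigma'_{-D_1',\dots,-D_r'}$, the subset $\nu_{\ne 1}$ is empty, so Theorem~\ref{Thm:5.8inFK18} case (3) yields
\[
\dabs{U_\Psi, G_\Psi \times \Gm, \bar W} \simeq \dabs{U_{\tilde\Sigma}\times\Gm^{\nu\setminus \tilde\Sigma(1)}, G_\Psi \times \Gm, \bar W}.
\]
The rays $\rho_1',\dots,\rho_r'$ supplied by Assumption~\ref{conical assumption}(ii) are precisely those required by Proposition~\ref{prop: regeometrize}, which rewrites $\bar W = v_1 g_1 + \dots + v_r g_r$ and identifies the right-hand side with $\dbcoh{[\mathcal Z'/G']}$. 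Combined with Corollary~\ref{cor: Hirano}, this identifies the target of the partial compactification functor from Theorem~\ref{partial compact CCR} with $\dbcoh{[\mathcal Z'/G']}$.

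For part (i), smoothness of $[\mathcal Z'/G']$ forces $\dbcoh{[\mathcal Z'/G']}$, and hence $\dabs{U_\Psi, G_\Psi \times \Gm, \bar W}$ via the equivalences above, to be homologically smooth and proper; Theorem~\ref{partial compact CCR} then produces the crepant categorical resolution, and transporting $i_* \circ \varphi^* \circ \Omega$ and $\Omega^{-1} \circ \varphi_* \circ i^*$ along the chain of equivalences yields the stated adjoint pair $F, G$ between $\dbcoh{[\mathcal Z'/G']}$ and $\dbcoh{[\mathcal Z/G]}$. For part (ii), one additionally uses that the level-$1$ condition on $\Sigma'_{-D_1',\dots,-D_r'}(1)$ descends to $\sum_i D_i' = -K_{X_{\Sigma'}}$, so $\mathcal Z'$ is a Calabi--Yau complete intersection, making $\dbcoh{[\mathcal Z'/G']}$ a Calabi--Yau category whenever $[\mathcal Z'/G']$ is smooth; Corollary~\ref{cor: CCR equiv} then upgrades the resolution to an equivalence. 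The main obstacle I expect is the level-$1$ arrangement in the first step: ensuring that the partial compactification $\Psi$ can be built from lattice points on $H_\m(1)$ so that case (3) of Theorem~\ref{Thm:5.8inFK18} (rather than its merely fully faithful cases (1) or (2)) applies. Once that compatibility between the partial compactification and the almost Gorenstein structure is secured, the remainder is a formal concatenation of already-established functors.
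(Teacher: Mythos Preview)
Your proposal is correct and follows exactly the route the paper takes: the paper's own proof simply invokes Corollary~\ref{cor: exoflop}, observes that Assumption~\ref{conical assumption} places us in case (c) of Theorem~\ref{Thm:5.8inFK18} and in the hypotheses of Proposition~\ref{prop: regeometrize}, and cites \cite[Corollary~5.15]{FK18} for the Calabi--Yau property of $\dbcoh{[\mathcal Z'/G']}$. You have unpacked this chain in more detail and correctly isolated the one nontrivial point---arranging $\Psi$ so that all its rays lie at $\m$-level $1$, forcing $\nu_{\ne 1}=\varnothing$---which the paper leaves implicit in its appeal to Corollary~\ref{cor: exoflop}.
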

\begin{proof}
This is an application of Corollary~\ref{cor: exoflop}. If Assumption~\ref{conical assumption} holds, then the conditions of Proposition~\ref{prop: regeometrize} hold and we are in case (c) of Theorem~\ref{Thm:5.8inFK18}. Since the fan $\Sigma'_{-D_1', \dots, -D_r'}$ has only rays that pair to $1$ with $\m$, we have that $\dbcoh{[\mathcal Z'/G']} $ is Calabi-Yau by \cite[Corollary 5.15]{FK18}.
\end{proof}

The choice of $\sigma'$ is key for the possibility that Assumption~\ref{conical assumption} can hold. For this reason, we now provide combinatorial criteria from convex geometry.

\subsection{Gorenstein Cones and CICYs}

 The following is a general result about almost Gorenstein cones. 

\begin{proposition}
\label{Prop:Both csrG implies Ass1}
 Let  $\sigma\subseteq N_{\R}\times \R^r$ be an almost Gorenstein cone with respect to $\mathfrak{m}$ above. If both $\sigma$ and $\sigma^\vee$ are completely split reflexive Gorenstein of index $r$,  then $\sigma$ fulfills Assumption \ref{conical assumption}.
\end{proposition}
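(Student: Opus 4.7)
The plan is to unpack the Cayley structure encoded by ``completely split'' on both $\sigma$ and $\sigma^\vee$ and read off the fan and the vector-bundle data directly; the only genuinely nontrivial part is Fano-ness of the base, for which the dual hypothesis is essential.

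Since $\sigma$ is completely split of length $r$, I would begin by writing
$$
\sigma = \op{Cone}\bigl(\op{Conv}(\Delta_1^\vee + e_1, \dots, \Delta_r^\vee + e_r)\bigr) \subseteq N_\R \times \R^r
$$
for lattice polytopes $\Delta_i^\vee \subseteq N_\R$ containing $0$. Every vertex of the Cayley polytope pairs to $1$ with $\mathfrak{m} := e_1^* + \dots + e_r^*$, so the almost-Gorenstein property of $\sigma$ with respect to $\mathfrak{m}$ is the Gorenstein element forced by the Cayley decomposition, giving part (i) of Assumption~\ref{conical assumption} immediately.

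For part (ii), I would let $\nu$ consist of $\{e_1, \dots, e_r\}$ together with all points $v + e_j$ for $v$ a vertex of $\Delta_j^\vee$, and apply Corollary~\ref{reg triang} to obtain a regular simplicial fan $\Sigma'_{-D_1', \dots, -D_r'}$ with support $\sigma$ and rays generated by $\nu$. Setting $\rho_i' = \op{Cone}(e_i)$ (a legitimate ray because $0 \in \Delta_i^\vee$), the relation $u_{\rho_1'} + \dots + u_{\rho_r'} = \mathfrak{n}$ is automatic, and every ray generator pairs to $1$ with $\mathfrak{m}$. The projection $\pi: N \times \Z^r \to N$ modding out by $\oplus \Z e_i$ sends $v + e_j \mapsto v$, with lift uniquely prescribed by $a_{i\rho} = -\delta_{ij}$ in the sign convention of Section~\ref{subsec:toric vector bundles}. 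Defining $D_i' := \sum_{v \in \op{vert}(\Delta_i^\vee)\setminus\{0\}} D_v$ on the image fan $\Sigma'$, a direct comparison with Proposition~\ref{Prop:FK14Prop5.16} identifies $\pi$ with the rank-$r$ vector bundle $\op{tot}\bigl(\oplus_{i=1}^r \O_{X_{\Sigma'}}(-D_i')\bigr) \to X_{\Sigma'}$.

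The main obstacle is showing $X_{\Sigma'}$ is Fano, and this is exactly where the completely split hypothesis on $\sigma^\vee$ is required. Dually one has
$$
\sigma^\vee = \op{Cone}\bigl(\op{Conv}(\Delta_1 + e_1^*, \dots, \Delta_r + e_r^*)\bigr)
$$
for lattice polytopes $\Delta_i \subseteq M_\R$ containing $0$. The resulting symmetric pair is precisely the combinatorial data of a Batyrev--Borisov nef partition \cite{BB1, BB2}: the Minkowski sum $\Delta := \sum_i \Delta_i$ is a reflexive polytope whose polar dual equals $\nabla := \op{Conv}(\cup_i \Delta_i^\vee)$. Since the primitive ray generators of $\Sigma'$ are exactly the vertices of $\nabla$ and $\Sigma'$ refines the face fan of $\nabla$, the toric variety $X_{\Sigma'}$ is a simplicial $\Q$-Fano toric variety, and $\{D_i'\}$ is the associated nef partition of $-K_{X_{\Sigma'}}$, completing the verification of Assumption~\ref{conical assumption}.
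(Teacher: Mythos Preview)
Your overall strategy---unpacking the two Cayley structures and recognizing the Batyrev--Borisov nef-partition data, citing \cite{BB1,BB2} (the paper invokes the equivalent Corollary~3.7 of \cite{BN08})---is the same as the paper's. But there is a genuine gap in the order of your construction.

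You build the total-space fan first: you invoke Corollary~\ref{reg triang} to produce \emph{some} simplicial fan with support $\sigma$ and rays $\nu$, christen it $\Sigma'_{-D_1',\dots,-D_r'}$, and then assert that the projection $\pi$ realises it as a rank-$r$ vector bundle over ``the image fan $\Sigma'$''. This step fails on two counts. First, Corollary~\ref{reg triang} is not the right tool: its statement is about extending a \emph{given} subfan $\Sigma_{-D_1,\dots,-D_r}$ to a semiprojective simplicial fan $\Psi$ on a larger cone, not about triangulating a cone from scratch with prescribed rays. Second, and more seriously, even granting existence of such a simplicial fan, an arbitrary simplicial refinement of $\sigma$ with ray set $\nu$ need \emph{not} be a vector-bundle fan: the construction in \S\ref{subsec:toric vector bundles} forces every maximal cone to contain all of $e_1,\dots,e_r$, and a generic triangulation will not do this. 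Consequently $\pi$ need not be a toric morphism at all, let alone the projection of a split bundle, and ``the image fan $\Sigma'$'' may not even exist.

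The paper avoids this by reversing the order: it first projects the ray generators to $N'$, builds a complete simplicial fan $\Sigma'$ there (completeness coming from $\mathfrak n$ lying in the relative interior of $\sigma$), defines the divisors $D_i'$ on $\Sigma'$, and only then forms the vector-bundle fan $\Sigma'_{-D_1',\dots,-D_r'}$ via the standard construction---which by design has the required support and rays. Along the way the paper also invokes \cite[Corollary~3.17]{BN08} to guarantee that the projected vertex sets of the $\Delta_i^\vee$ meet only at the origin, so each $D_{\bar\rho}$ occurs in exactly one $D_i'$; your definition $D_i':=\sum_{v\in\op{vert}(\Delta_i^\vee)\setminus\{0\}}D_v$ silently presumes this disjointness. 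Your final paragraph on Fano-ness via the reflexivity of $\nabla=\op{Conv}(\bigcup_i\Delta_i^\vee)$ is fine, though note the ray generators of $\Sigma'$ are the nonzero vertices of the individual $\Delta_i^\vee$, which need only lie \emph{on} $\nabla$, not be its vertices.
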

\begin{proof}
 Being reflexive Gorenstein with respect to $\mathfrak{m}$ implies that $\sigma$ is almost Gorenstein with respect to $\mathfrak{m}$.
    Firstly, we note that $\sigma$ and $\sigma^\vee$ both being completely split reflexive Gorenstein is equivalent to $\sigma$ being associated to a nef-partition by Corollary 3.7 of \cite{BN08}. That is, there exists some $e_1', \dots, e_r' \in N \times \Z^r$ that form part of a $\Z$-basis for $N\times \Z^r$ so that there exists a nef partition $\Delta_1+\dots+\Delta_r=\Delta$ with unique interior point $0$ in $N' := N\times \Z^r / (\oplus_{i=1}^r \Z e_i')$ and   $\sigma=\cone(\Delta_1+ e_1', \dots, \Delta_r + e_r')$. 

    Denote by $V_i$ the vertex set of $\Delta_i + e_i'$. Note $V_i\cap V_j=\emptyset$. Let $V=\bigcup_{i=1}^r V_i$. We note $\sigma=\cone(V)$, and by associating to each $p\in V$ the ray $\rho_p$ with primitive generator $p$, we have $\sigma(1)=\{\rho_p\ \vert \ p\in V\}$. We want to show that $\sigma=|\Sigma'_{-D_1',\dots,-D_r'}|$ for some vector bundle over a simplicial $\Sigma'$. We prove this by direct construction.

    Write $\pi: N_{\R} \times \R^r \to N_{\R}'$ for the projection and $\overline{\rho}_p := \pi (\rho_p)$. We have that the cone over $\pi(V) \setminus\{0\}$ has support $N_{\R}'$ as $e_1'+\dots+e_r'$ is in the relative interior of $\sigma$. Thus, there exists a complete fan in $N_{\R}'$ with rays $ \{\overline{\rho}_p \ | \ p \in V, p \ne e_i' \text{ for all $i$}\}$. One can then simplicially subdivide to obtain a fan $\Sigma'$.  Let 
    $$D'_i:=\sum_{\substack{ p\in V_i\\ p \ne e_i'\text{ for all $i$}}}D_{\overline{\rho}_p}.$$

    Note that, by Corollary 3.17 of \cite{BN08}, the images of $V_i$ and $V_j$ in $N'$ intersect only at the origin, thus each $D_{\rho_p}$ appears as a nontrivial summand in a unique divisor $D_i'$. Following the standard toric vector bundle construction \cite[\textsection7.3]{CLS}, we find that the vector bundle $\bigoplus \O_{X_{\Sigma'}}(-D'_i)$ has a fan $\Sigma'_{-D'_1,\dots,D'_r}$ with rays $\{\rho_p\ \vert \ p\in V\}$, i.e. $\Sigma'_{-D'_1,\dots,D'_r}(1)=\sigma(1)$, implying that $\sigma=|\Sigma'_{-D'_1,\dots,-D'_r}|$. As desired, we therefore have constructed directly a fan $\Sigma'_{-D'_1,\dots,-D_r'}$ that fulfills the conditions of Assumption \ref{conical assumption}.
\end{proof}

We look to apply Corollary \ref{2 CICYs} while using the above Proposition. To do so, it is sufficient to check if the dual cone $\sigma_W^\vee$ to $\sigma_W$ defined in~\eqref{sigma w} is completely split reflexive Gorenstein, and that the LG model corresponds to a smooth complete intersection. In the rest of the section, we find combinatorial criteria and genericity hypotheses where both are satisfied.

\begin{lemma}
\label{Lem:sigmaW dual csrG gives Assumption}
    Let $\sigma_W$ be as in~\eqref{sigma w}. If its dual $\sigma_W^\vee$ is completely split Gorenstein of index $r$, then $\sigma_W^\vee$ fulfills Assumption \ref{conical assumption}.
\end{lemma}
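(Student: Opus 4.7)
The plan is to reduce this to Proposition~\ref{Prop:Both csrG implies Ass1}, applied with $\sigma = \sigma_W^\vee \subseteq N_\R \times \R^r$. To invoke that proposition, I need to verify (a) that both $\sigma_W^\vee$ and $\sigma_W$ are completely split reflexive Gorenstein of index $r$, and (b) that $\sigma_W^\vee$ is almost Gorenstein with respect to the specific functional $\mathfrak m = e_1^* + \dots + e_r^*$ fixed in the setup.

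I would first record that $\sigma_W$ has a built-in Cayley structure of length $r$: from~\eqref{sigma w}, $\Xi_W = \bigcup_{i=1}^r \Xi_{i,W}$ with $\Xi_{i,W}$ lying in the affine slice $H_i$, so $\sigma_W = \op{Cone}(\op{Conv}(\Xi_1) + e_1^*, \dots, \op{Conv}(\Xi_r) + e_r^*)$ is Cayley with respect to the standard basis $\{e_i^*\}$. Each primitive generator $\bar m = m + e_i^*$ pairs with $\mathfrak n = e_1 + \dots + e_r$ to give $1$, so $\sigma_W$ is automatically almost Gorenstein with respect to $\mathfrak n$. Since being reflexive Gorenstein of index $r$ is symmetric under duality, the hypothesis on $\sigma_W^\vee$ upgrades $\sigma_W$ to reflexive Gorenstein of index $r$ as well; combined with the Cayley structure above, this settles (a).

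For (b), I would invoke Corollary~3.7 of~\cite{BN08}: the simultaneous complete splitting of $\sigma_W$ and $\sigma_W^\vee$ corresponds to a nef partition, with the Cayley basis for $\sigma_W^\vee$ dual to that of $\sigma_W$. Since $\sigma_W$ is Cayley with respect to $\{e_i^*\}$, the decomposition of $\sigma_W^\vee$ must be of the form $\op{Cone}(\nabla_1 + e_1, \dots, \nabla_r + e_r)$ for the dual nef-partition polytopes $\nabla_i$, which forces the Gorenstein element of $\sigma_W^\vee$ to be exactly $\mathfrak m$. With both hypotheses verified, Proposition~\ref{Prop:Both csrG implies Ass1} applied to $\sigma_W^\vee$ finishes the argument. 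The main obstacle is pinning down the compatibility of the two Cayley bases under duality; once that Batyrev--Nill duality is unpacked, the reduction to Proposition~\ref{Prop:Both csrG implies Ass1} is mechanical.
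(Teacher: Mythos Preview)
Your proposal is correct and takes essentially the same approach as the paper: both verify that $\sigma_W$ is completely split reflexive Gorenstein of index $r$ and then invoke Proposition~\ref{Prop:Both csrG implies Ass1}. The minor differences are that you read off the Cayley structure on $\sigma_W$ directly from the slice decomposition $\Xi_W = \bigcup_i \Xi_{i,W}$, whereas the paper reaches the same conclusion by noting that $e_1,\dots,e_r \in |\Sigma_{-D_1,\dots,-D_r}| \subseteq \sigma_W^\vee$ sum to $\mathfrak n$ and applying \cite[Proposition~2.3]{BN08}; and you spell out that $\mathfrak m$ is the Gorenstein element of $\sigma_W^\vee$, a point the paper leaves implicit.
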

\begin{proof}
    The cone $\sigma_W^\vee$ is reflexive Gorenstein of index $r$, hence so is $\sigma_W$. The containment $|\Sigma_{-D_1,\dots,-D_r}|^\vee\supseteq \sigma_W$ implies that $\mathfrak{n}_{\sigma_W}$ is the Gorenstein element $\mathfrak{n}$ of $|\Sigma_{-D_1,\dots,-D_r}|^\vee$.

Furthermore, since $|\Sigma_{-D_1,\dots,-D_r}|$ is completely split and $|\Sigma_{-D_1,\dots,-D_r}|\subseteq \sigma_W^\vee$, there are elements $e_1^*, \dots, e_r^* 
 \in|\Sigma_{-D_1,\dots,-D_r}|\subseteq \sigma_W^\vee $ so that $e_1^* + \dots +e_r^*= \mathfrak n_{|\Sigma_{-D_1,\dots,-D_r}|}=\mathfrak{n}_{\sigma_W}$. Proposition 2.3 in \cite{BN08} then implies that $\sigma_W$ is a Cayley cone and thus a completely split reflexive Gorenstein cone of index $r$. Since $\sigma_W$ and $\sigma_W^\vee$ are both completely split reflexive Gorenstein cones of index $r$, Proposition \ref{Prop:Both csrG implies Ass1} implies that $\sigma_W^\vee$ fulfills the Assumption \ref{conical assumption}.
\end{proof}

The next result uses Lemma \ref{Lem:sigmaW dual csrG gives Assumption} and Bertini's theorem to allow us to apply Corollary \ref{2 CICYs}, hence providing a crepant categorical resolution as desired. First, let us set up some notation.

\begin{definition}
    We say $\Xi$ is \emph{saturated} if $\Xi = \op{Conv}(\Xi) \cap (M\times \Z^r)$.
\end{definition}

\begin{notation}
    Let $\Xi$ be saturated and $\Psi$ a fan so that $\Xi \subseteq |\Psi|^\vee$. We write $\mathcal{F}_{\Xi}$ for the family of polynomials $W: U_{\Psi} \to \mathbb{A}^1$ of the form
    $$
    W = \sum_{m \in \Xi} c_m \prod_{\rho\in \Psi(1)} x^{\langle m, u_\rho\rangle }.
    $$
\end{notation}

\begin{corollary}
\label{Cor:csrG gives cat resn}
Let $\Xi_W$ and $\sigma_W$ be as defined in~\eqref{sigma w} and suppose $[\mathcal Z / G]$ is positive dimensional. Suppose $\Xi_W$ is saturated, $\sigma_W^\vee$ is completely split reflexive Gorenstein of index $r$ and that $W\in \mathcal F_\Xi$ is sufficiently generic. Then there is a crepant categorical resolution of $[\mathcal Z/G]$
\begin{align*}
F&:\dbcoh{[\mathcal Z'/G']} \to \dbcoh{[\mathcal Z/G]},   \\
G&:\op{Perf} [\mathcal Z/G] \to \dbcoh{[\mathcal Z'/G']}
\end{align*}
by $[\mathcal Z'/G']$ as in~\eqref{eq: 2ndCI}. Moreover, if $[\mathcal{Z} / G]$ is smooth and $\dim [\mathcal{Z} / G]>0$, then there is a derived equivalence between $[\mathcal{Z}/G]$ and $[\mathcal{Z}'/G']$.
\end{corollary}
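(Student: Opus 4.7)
The plan is to assemble the corollary from three ingredients that are already in place: (i) Lemma~\ref{Lem:sigmaW dual csrG gives Assumption} to convert the hypothesis on $\sigma_W^\vee$ into Assumption~\ref{conical assumption}; (ii) a Bertini-type genericity argument to conclude that $[\mathcal{Z}'/G']$ is smooth; and (iii) Corollary~\ref{2 CICYs} to package the result as a crepant categorical resolution (and a derived equivalence in the smooth case).

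First, since $\sigma_W^\vee$ is completely split reflexive Gorenstein of index $r$, Lemma~\ref{Lem:sigmaW dual csrG gives Assumption} applies and produces a fan $\Sigma'_{-D_1',\dots,-D_r'}$ with support $\sigma_W^\vee$ satisfying all the conditions in Assumption~\ref{conical assumption}. In particular, the decomposition $\bar W = v_1 g_1 + \dots + v_r g_r$ with $g_i \in \Gamma(X_{\Sigma'}, \mathcal{O}_{X_{\Sigma'}}(D_i'))$ from Proposition~\ref{prop: regeometrize} is available, and the complete intersection $[\mathcal{Z}'/G'] = [Z(g_1, \dots, g_r)/S_{\Sigma'_{-D_1',\dots,-D_r'}(1)}]$ from~\eqref{eq: 2ndCI} is defined.

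Next, I would establish that for $W \in \mathcal{F}_{\Xi_W}$ sufficiently generic, the complete intersection $[\mathcal{Z}'/G']$ is smooth. The saturation hypothesis on $\Xi_W$ is exactly what ensures that the lattice points of the Cayley summands $\Delta_i'$ (which, via the nef partition structure coming from $\sigma_W$ being completely split, correspond to monomials of $g_i$) are in bijection with the coefficients of a generic element of $\mathcal{F}_{\Xi_W}$. Thus a generic choice of $W$ corresponds to a generic choice of each $g_i$ in the complete linear system $|D_i'|$. Since $X_{\Sigma'}$ is a toric Fano variety (by Assumption~\ref{conical assumption}(ii)) and the $D_i'$ are basepoint-free on the smooth locus (being pullbacks of effective divisors whose sum is $-K_{X_{\Sigma'}}$ and whose polytopes are full-dimensional summands of $\Delta$), the Bertini theorem for quasi-projective Deligne-Mumford stacks gives the smoothness of $[\mathcal{Z}'/G']$ for generic coefficients. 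This Bertini step is where I expect the main technical care to be required: one must verify basepoint-freeness of each $|D_i'|$ on the relevant stack, and then iterate Bertini $r$ times to intersect the $r$ generic sections transversally.

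With Assumption~\ref{conical assumption} verified and $[\mathcal{Z}'/G']$ smooth, Corollary~\ref{2 CICYs}(i) immediately yields the crepant categorical resolution
\begin{align*}
F &: \dbcoh{[\mathcal{Z}'/G']} \to \dbcoh{[\mathcal{Z}/G]}, \\
G &: \op{Perf}[\mathcal{Z}/G] \to \dbcoh{[\mathcal{Z}'/G']},
\end{align*}
which proves the first claim. For the moreover, if $[\mathcal{Z}/G]$ is also smooth, then both complete intersections are smooth, and Corollary~\ref{2 CICYs}(ii) directly upgrades the crepant categorical resolution to a derived equivalence. The only substantive work beyond invoking prior results is the Bertini argument, which hinges on the saturation hypothesis and the Cayley (nef-partition) structure to identify generic $W$ with generic tuples $(g_1, \dots, g_r)$.
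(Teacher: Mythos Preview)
Your proposal is correct and follows essentially the same route as the paper: invoke Lemma~\ref{Lem:sigmaW dual csrG gives Assumption} to verify Assumption~\ref{conical assumption}, run a Bertini argument (using saturation of $\Xi_W$) to obtain smoothness of $[\mathcal{Z}'/G']$, and then apply Corollary~\ref{2 CICYs}. The one place where the paper is sharper than your sketch is the justification of basepoint-freeness: rather than arguing via ``full-dimensional summands'' or restricting to the smooth locus, the paper observes (from the proof of Proposition~\ref{Prop:Both csrG implies Ass1}) that the Cayley structure on $\sigma_W^\vee$ is in fact a nef-partition of $\Delta_{-K_{\Sigma'}}$, so each $D_i'$ is nef and hence basepoint-free by \cite[Proposition~6.3.12]{CLS}.
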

\begin{proof}
    By Lemma \ref{Lem:sigmaW dual csrG gives Assumption}, the cone $\sigma_W^\vee$ fulfills Assumption \ref{conical assumption}. To apply Corollary \ref{2 CICYs} and obtain the desired categorical resolution, it remains to show that the complete intersection $[\mathcal Z'/G']$ in $\mathcal X_{\Sigma'}$ is indeed smooth. In the proof of Proposition \ref{Prop:Both csrG implies Ass1}, we have shown that the lattice polytopes giving $\sigma_W^\vee$ its Cayley structure in fact give a nef-partition of $\Delta_{-K_{\Sigma'}}$.
    Hence, the divisors $D'_i$ corresponding to the nef-partition and giving the vector bundle $\bigoplus_{i=1}^r \O_{X_{\Sigma'}}(-D'_i)$ are nef. In particular, by Proposition 6.3.12 in \cite{CLS} these divisors are basepoint free. 
    Recall that any section $g\in \Gamma(\bigoplus \O_{X_{\Sigma'}}(-D_i'))$ can be expressed via a sum of monomials \[\sum_{m\in H_{\mathfrak{n}}(1)\cap (M\times\Z^r)\cap |\Psi|^\vee\cap M}c_mx^m\] for some coefficients $c_m$. By Bertini's Theorem, a generic section $(g_1,\dots,g_r)$ of the vector bundle will give a smooth complete intersection $[\mathcal Z'/G']:=Z(g_i)\subseteq \mathcal{X}_{\Sigma'}$. The set of such generic sections is open and dense in the linear system spanned by the divisors $D_i'$. As $\Xi_W$ is saturated, the open and dense set of generic sections must thus intersect the family corresponding to sections of the form $\sum_{m\in \Xi_W}c_mx^m$, i.e. there is an element $W$ in $\mathcal F_W$ such that the corresponding complete intersection $[\mathcal Z'/G']\subseteq \mathcal{X}_{\Sigma'}$ is smooth. Corollary \ref{2 CICYs} then gives the crepant categorical resolution  as desired. We remark that these categories are connected when the Calabi-Yau orbifolds are positive dimensional and thus the equivalence holds if the additional hypotheses are satisfied.
    \end{proof}

The Corollary \ref{Cor:csrG gives cat resn} gives us combinatorial conditions we can check to generate categorical resolutions as in Corollary \ref{2 CICYs}. Since this is useful, we give below another formulation of it that may be more user-friendly for applications.

\begin{definition}
    \label{Def:Int closed}
    A lattice polytope $\Delta\subseteq N_{\R}$ is called \newterm{integrally closed}, if any lattice point in the Gorenstein cone $\sigma$ over $\Delta$ is a sum of lattice points from the support $\sigma(1)\simeq \Delta$.
\end{definition}

\noindent One can rewrite Corollary~\ref{Cor:csrG gives cat resn} in terms of the support polytopes in the following way.

\begin{corollary}
    \label{Cor: CCR via integrally closed}
     Suppose $\Xi$ is saturated and that $\tilde{\Delta}=\conv(\Xi_W)$ and its dual $\tilde{\Delta}^\vee$ are both integrally closed Gorenstein polytopes (of index $r$). Then, for a generic polynomial $W$ in the family $\mathcal F_\Xi$, there is a crepant categorical resolution of $[\mathcal Z/G]$
\begin{align*}
F&:\dbcoh{[\mathcal Z'/G']} \to \dbcoh{[\mathcal Z/G]},   \\
G&:\op{Perf} [\mathcal Z/G] \to \dbcoh{[\mathcal Z'/G']}
\end{align*}
by $[\mathcal Z'/G']$ as in~\eqref{eq: 2ndCI}.
\end{corollary}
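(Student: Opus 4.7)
The plan is to reduce Corollary~\ref{Cor: CCR via integrally closed} to the previously established Corollary~\ref{Cor:csrG gives cat resn} by translating the polytope-level conditions on $\tilde{\Delta}$ and $\tilde{\Delta}^\vee$ into the cone-level hypothesis needed to invoke that corollary. Concretely, I need to show that if $\tilde{\Delta} = \op{Conv}(\Xi_W)$ and $\tilde{\Delta}^\vee$ are both integrally closed Gorenstein polytopes of index $r$, then $\sigma_W^\vee = \op{Cone}(\tilde{\Delta}^\vee)$ is completely split reflexive Gorenstein of index $r$, which is exactly the hypothesis of Corollary~\ref{Cor:csrG gives cat resn}.

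First, I would unpack the definitions to get that the Gorenstein polytope hypothesis of index $r$ immediately gives the reflexive Gorenstein cone condition on both $\sigma_W = \op{Cone}(\tilde{\Delta})$ and $\sigma_W^\vee = \op{Cone}(\tilde{\Delta}^\vee)$ of index $r$: a lattice polytope is Gorenstein of index $r$ precisely when its cone is reflexive Gorenstein of index $r$, with the Gorenstein element placing the polytope at height one. In particular, $\sigma_W$ inherits the Gorenstein element $\mathfrak{n} = e_1 + \dots + e_r$ from the ambient cone $|\Sigma_{-D_1,\dots,-D_r}|^\vee$ in which $\sigma_W$ sits.

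Next, I would use both integrally closed hypotheses to produce the completely split/Cayley structure. The integrally closed condition for $\tilde{\Delta}$ means every lattice point of $\sigma_W$ decomposes as a sum of lattice points of $\tilde{\Delta}$; symmetrically for $\tilde{\Delta}^\vee$. Combined with the Batyrev--Nill framework of \cite{BN08} (in particular Proposition~2.3 and Corollary~3.7 cited in the proof of Lemma~\ref{Lem:sigmaW dual csrG gives Assumption}), these simultaneous integrally closedness conditions force the polytope to admit a Cayley decomposition of length $r$ compatible with the splitting $\mathfrak{n}=e_1+\dots+e_r$, thereby witnessing that $\sigma_W^\vee$ is completely split reflexive Gorenstein of index~$r$. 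This is essentially the same decomposition argument that appeared inside the proof of Lemma~\ref{Lem:sigmaW dual csrG gives Assumption}, only now applied to $\tilde\Delta^\vee$ rather than already assuming the Cayley structure.

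With $\sigma_W^\vee$ shown to be completely split reflexive Gorenstein of index $r$, and $\Xi_W$ saturated by hypothesis, Corollary~\ref{Cor:csrG gives cat resn} applies verbatim: for a sufficiently generic $W \in \mathcal F_{\Xi}$ we obtain the desired pair of functors between $\dbcoh{[\mathcal Z'/G']}$ and $\dbcoh{[\mathcal Z/G]}$ forming a crepant categorical resolution, with $[\mathcal Z'/G']$ as in~\eqref{eq: 2ndCI}. The main obstacle will be the middle step, namely showing that integrally closedness on both sides forces the Cayley decomposition compatible with the prescribed splitting $\mathfrak{n}=e_1+\dots+e_r$; one has to verify that the Cayley summands extracted combinatorially from the integrally closed data really pair to one with each $e_i^*$ (rather than just giving \emph{some} Cayley structure), so that the resulting nef-partition matches the ambient vector bundle decomposition dictated by the original setup.
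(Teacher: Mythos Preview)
Your reduction to Corollary~\ref{Cor:csrG gives cat resn} is exactly the paper's strategy, and your translation of the Gorenstein-polytope hypothesis into the reflexive-Gorenstein-cone condition is correct. The paper streamlines the middle step by citing Proposition~2.11 of \cite{BB97} (Gorenstein polytope $\Rightarrow$ reflexive Gorenstein cone) and then Corollary~3.9 of \cite{BN08}, which says that if both $\tilde\Delta$ and $\tilde\Delta^\vee$ are integrally closed Gorenstein polytopes then $\sigma_W$ is associated to a nef-partition; by the equivalence in \cite[Corollary~3.7]{BN08} this makes both $\sigma_W$ and $\sigma_W^\vee$ completely split of index $r$.

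The ``main obstacle'' you flag is not actually an obstacle: Corollary~\ref{Cor:csrG gives cat resn} only requires that $\sigma_W^\vee$ admit \emph{some} completely split structure of index $r$, not one compatible with the particular basis $e_1^*,\dots,e_r^*$. The compatibility with the ambient setup is handled downstream inside Lemma~\ref{Lem:sigmaW dual csrG gives Assumption} (which uses \cite[Proposition~2.3]{BN08} and the inclusion $|\Sigma_{-D_1,\dots,-D_r}|\subseteq\sigma_W^\vee$ to produce the correct $e_i^*$), so you need not verify it at this stage.
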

\begin{proof}
    By Corollary \ref{Cor:csrG gives cat resn} it is sufficient to show that $\sigma_W$ is completely split reflexive Gorenstein. Since $\tilde{\Delta}$ is a Gorenstein polytope, by Proposition 2.11 in \cite{BB97} the cone $\sigma_W$ is reflexive Gorenstein. Since both $\tilde{\Delta}$ and $\tilde{\Delta}^\vee$ are integrally closed, by Corollary 3.9 of \cite{BN08} we obtain that $\sigma_W$ is completely split and associated to a nef-partition. As $\Xi$ is saturated, there is a sufficiently generic polynomial in the family $\mathcal F_\Xi$ and the statement of the Corollary is not empty.
\end{proof}

\section{Examples and applications}
The following highlights use-cases and examples to build intuition on exoflops.

\subsection{Aspinwall's example}\label{subsec:Aspinwall}

We first explain an explicit example. We choose to repeat Aspinwall's primary example in his paper \cite[\textsection 2.3, 3.1-5]{Aspinwall}. In some sense this is a simple case in comparison to the general case of what can happen above as the partial compactification in the exoflop is as simple as possible, but it provides intuition on why an exoflop can give rise to a categorical resolution.

Consider a quartic 
$$
f = f_4(x_1, x_2, x_3) + x_0 f_3(x_1,x_2,x_3)+ x_0^2 f_2(x_1,x_2,x_3)
$$
where $f_k(x_1, x_2, x_3)$ are homogeneous equations of degree $k$. We assume that the $f_k$ are generic enough to avoid additional singularities. We have that $f \in \Gamma(\P^3, \mathcal{O}_{\P^3}(4))$, and $Z(f)\subseteq \P^3$ is a singular quartic surface. Indeed, its singular locus is the point $(1:0:0:0)$. 

The vector bundle $\op{tot} \O_{\P^3}(-4)$ can be written as a quotient stack $[(\A^4 \setminus\{0\}) \times \A^1 / \Gm]$, where $\Gm$ acts with weights $(1,1,1,1,-4)$. Write $u$ for the variable corresponding to the last coordinate. There is a $\Gm$-invariant global function 
$$
W=uf: (\A^4 \setminus\{0\}) \times \A^1 \to \A^1.
$$
One computes that the critical locus has two irreducible components, when $u=f=0$ and $Z(x_1,x_2, x_3)$. The former component, when viewed in the stack $[(\A^4 \setminus\{0\}) \times \A^1 / \Gm]$ is proper and isomorphic to $Z(f)$ in the zero section $u=0$. The latter is the $\A^1$ corresponding to the fiber over the point $(1:0:0:0)$. 

We construct an partial compactification of the quotient stack $[(\A^4 \setminus\{0\}) \times \A^1 / \Gm]$ and extend $W$. To do so, there is a stack isomorphism
$$
\varphi: [(\A^4 \setminus\{0\}) \times \A^1 / \Gm]  \to [(\A^4 \setminus\{0\}) \times \A^1 \times \Gm / (\Gm)^2]
$$
where the two $\Gm$ act by weights $(1,1,1,1,-4,0)$ and $(1,0,0,0,-2,1)$. We will use the variable $y$ for this new coordinate. Consider the $(\Gm)^2$-equivariant open immersion 
$$
i: (\A^4 \setminus\{0\}) \times \A^1 \times \Gm) \hookrightarrow \A^6 \setminus Z(yx_0, yx_1, yx_2, yx_3, ux_0).
$$
Here, we can extend $W$ by taking the $(\Gm)^2$-invariant function 
$$
\bar W= u(\bar f), \text{ where } \bar f = y^2f_4(x_1, x_2, x_3) + yx_0 f_3(x_1,x_2,x_3)+ x_0^2 f_2(x_1,x_2,x_3)
$$
Since the additional strata added by the open immersion is away from the zero section of the line bundle, the first component of the critical locus of $\bar W$ is the same as $W$. However, the second is compactified to be a weighted projective line.

\begin{remark}
The above construction is simple in toric geometry. Take the standard fan for $\P^3$. The vector bundle is the toric variety associated to the fan obtained by the star subdivision at the ray generated by the lattice point $(0,0,0,1)$ of the cone $$\op{Cone}((1,0,0,1),(0,1,0,1),(0,0,1,1), (-1,-1,-1,1)).$$ The partial compactification is found by adding the cone $$\op{Cone}((-1,0,0,1),(0,1,0,1),(0,0,1,1), (-1,-1,-1,1))$$ to the fan. The maximal special linear system allowed to take this partial compactification corresponds to 
\begin{align*}
    \Xi =  (M \times \Z) \cap \op{Conv}(&(-1,-1,-1,1), (-1,3,-1,1), (-1,-1,3,1), \\ &(1,-1,-1,1), (1,1,-1,1),(1,-1,1,1))
\end{align*}
Taking a generic enough potential using $\Xi$ is equivalent to choosing $f_4, f_3$, and $f_2$ above generic enough to avoid singularities.
\end{remark}

There is a toric flop (given by GIT) corresponding to the following birational map
$$
\psi: \A^6 \setminus Z(yx_0, yx_1, yx_2, yx_3, ux_0) \dashrightarrow \A^6 \setminus Z(x_0x_1, x_0x_2, x_0x_3, yx_1, yx_2, yx_3)
$$
One can check that $[\A^6 \setminus Z(x_0x_1, x_0x_2, x_0x_3, yx_1, yx_2, yx_3) / (\Gm)^2]$ is total space of the anticanonical bundle of the blow up $\op{Bl}_{(1:0:0:0)}\P^3$ of $\P^3$ at $(1:0:0:0)$, and that $\bar f \in \Gamma(\op{Bl}_{(1:0:0:0)}\P^3, -K_{\op{Bl}_{(1:0:0:0)}\P^3})$. Since $f$ was chosen sufficiently generically, $Z(\bar f)$ is smooth. One then obtains $Z(\bar f)\subseteq\op{Bl}_{(1:0:0:0)}\P^3 $ is a categorical resolution of $Z(f)\subseteq \P^3$ (as an example of Corollary~\ref{2 CICYs}). 

\subsection{Derived equivalences with varying bundle structures}\label{exa:diffbund}

As seen above, a standard (toric) resolution of singularities can appear from an exoflop, but there are some derived equivalences that are found where a birational equivalence is not obvious.  In this section we exhibit the convex geometry that leads to such a derived equivalence. This involves when the toric vector bundle structures differ (that is, there are different sets of minimal generators that sum to $\mathfrak{n}$, as seen in Assumption~\ref{conical assumption}). We will consider three different Calabi-Yau complete intersections (we have chosen a small dimensional case to attempt to not cloud the example with too much unnecessary toric geometry). 

For this example, we work in $N= \Z^5$ and take the rays $\rho_1, \dots, \rho_{12}$ with minimal generators
\begin{align*}
&u_{\rho_1}= (2,0,-1,0,1), \hspace{0.5em} &u_{\rho_2} = (0,2,-1,0,1), \hspace{0.5em} &u_{\rho_3}=(-1,-1,2,1,0), \hspace{0.5em} &u_{\rho_4} = (-1,-1,0,1,0), \\
&u_{\rho_5} = (1,-1,0,1,0), \hspace{0.5em} &u_{\rho_6} = (-1,1,0,1,0), \hspace{0.5em} &u_{\rho_7}=(0,0,1,0,1), \hspace{0.5em} &u_{\rho_8} = (0,0,-1,0,1), \\
&u_{\rho_9} = (0,-1,0,1,0), \hspace{0.5em} &u_{\rho_{10}} = (0,1,0,0,1), \hspace{0.5em} &u_{\rho_{11}} = (0,0,0,1,0), \hspace{0.5em} &u_{\rho_{12}}=(0,0,0,0,1).
\end{align*}
\noindent Consider the following three cones:
\begin{align*}
    & \sigma = \cone(\rho_1,\dots,\rho_8),\\
    & \sigma_1=\cone(\rho_1,\rho_2,\rho_3,\rho_4,\rho_{11},\rho_{12}),\\
    & \sigma_2=\cone(\rho_1,\rho_2,\rho_3,\rho_4, \rho_9, \rho_{10}).
\end{align*}
We refer the reader to Figure~\ref{fig} to get some geometric intuition in the convex geometry. Note that $\rho_9, \rho_{10}, \rho_{11}, \rho_{12} \in \sigma$, hence $\sigma_1,\sigma_2\subseteq \sigma$. Moreover, all 3 cones are completely split Gorenstein cones of index 2 with respect to $\mathfrak{m} = (0,0,0,1,1)$.

\begin{figure}
\centering
\begin{tikzpicture}
		[cube/.style={thick,black}, tetra/.style={thick,magenta}
			grid/.style={very thin,gray},
			axis/.style={->,blue,thick}, tetra/.style={thick,magenta}, common/.style={violet}]

			

	\draw[tetra] (2,0,-1) -- (0,2,-1) -- (0,0,-1) -- cycle;
	\draw[tetra] (2,0,-1) -- (0,0,1);
	\draw[tetra] (0,2,-1) -- (0,0,1);
	\draw[tetra] (0,0,-1) -- (0,0,1);
	
	\draw[tetra] (2,0,-1) node[right] {$u_1$};
	\draw[tetra] (2,0,-1) node {$\bullet$};
	
	\draw[tetra] (0,2,-1) node[right] {$u_2$};
	\draw[tetra] (0,2,-1) node {$\bullet$};
	
	\draw[tetra] (0,0,1) node[below] {$u_7$};
	\draw[tetra] (0,0,1) node {$\bullet$};
	
	\draw[tetra] (.25,0,-1) node[above] {$u_8$};
	\draw[tetra] (0,0,-1) node {$\bullet$};
	
	\draw[tetra] (0,1,0) node[left] {$u_{10}$};
	\draw[tetra] (0,1,0) node {$\bullet$};

	\draw[tetra] (0,0,0) node {$\bullet$};
	
	\draw[cube] (-1,-1,0) -- (-1,1,0) -- (1,-1,0) -- cycle;
	\draw[cube] (-1,-1,2) -- (-1,-1,0);
	\draw[cube] (-1,-1,2) -- (-1,1,0);
	\draw[cube] (-1,-1,2) -- (1,-1,0);
	
	\draw[cube] (-1,-1,2) node {$\bullet$};
	\draw[cube] (-1,-1,2) node[below] {$u_3$};
	
	\draw[cube] (-1,-1,0) node {$\bullet$};
	\draw[cube] (-1,-1,0) node[left] {$u_4$};
	
	\draw[cube] (1,-1,0) node {$\bullet$};
	\draw[cube] (1,-1,0) node[right] {$u_5$};
	
	\draw[cube] (-1,1,0) node {$\bullet$};
	\draw[cube] (-1,1,0) node[left] {$u_6$};
	
	\draw[cube] (0,-1,0) node {$\bullet$};
	\draw[cube] (.1,-1,0) node[above] {$u_9$};
	
	\draw[common] (0,0,0) node {$\bullet$};

	
	
\end{tikzpicture}
\caption{A depiction of the minimal generators $u_i := u_{\rho_i}$ of the cone $\sigma\subseteq \R^5$ when projected to the first three coordinates. Here, the minimal generators in black have their fourth coordinate $1$ and those in magenta have fifth coordinate $1$. The purple vertex is $(0,0,0)$ and the only point where the tetra intersect after projecting down to the first three coordinates. The cone $\sigma$ is a cone over the Cayley product of these two tetrahedra. }
\label{fig}
\end{figure}

Note that the cone $\sigma^\vee$ is completely split Gorenstein cone of index 2 with respect to $\mathfrak{n} = (0,0,0,1,1)$. We take $\Xi_W = H_{\mathfrak{n}}(1)\cap M \cap \sigma^\vee$, which  one can compute to be
$$
\Xi_W = \{(1,0,0,1,0), (0,1,0,1,0), (0,0,1,0,1), (-1,-1,-1,0,1), (0,0,0,1,0), (0,0,0,0,1)\}. 
$$
We name each of these above lattice points $m_1, \dots, m_6 \in \Xi_W$ (in the above order), and can write the following global function on any fan $\Sigma$ with $\Sigma(1) = \{ \rho_1, \dots, \rho_{12}\}$. 
\begin{align*}
W =  \sum_{i=1}^6 c_i \prod_{\rho\in \sigma(1)} x^{\langle m_i, u_{\rho_i}\rangle } &= c_1 x_1^2 x_5^2x_9x_{11} + c_2 x_2^2 x_6^2 x_{10}x_{11} + c_3 x_3^2 x_7^2 x_{10} x_{12} \ + \\ 
	&\qquad  c_4 x_4^2 x_8^2 x_9x_{12} + c_5 x_3x_4x_5x_6x_9 x_{11} + c_6 x_1x_2x_7x_8x_{10}x_{12}.
\end{align*}
We can use $W$ to define complete intersections in different toric varieties. Since $\sigma, \sigma_1, \sigma_2$ all are completely split Gorenstein cones of index 2, there exists fans $\Sigma, \Sigma_1,$ and $\Sigma_2$ that are total spaces of rank two vector bundles over dimension 3 toric varieties. 

For $\Sigma_1$, we star subdivide with respect to $\rho_{11}$ and $\rho_{12}$ as $u_{\rho_{11}}+u_{\rho_{12}}= \mathfrak{n}$. In this case, we can reduce the potential to
$$
W' = (c_1 x_1^2 + c_2 x_2^2 + c_5x_3x_4)x_{11} + (c_3 x_3^2 +  c_4 x_4^2 + c_6 x_1x_2)x_{12}.
$$
and one can compute that this corresponds to a complete intersection 
$$
\mathcal{Z}' := Z(c_1 x_1^2 + c_2 x_2^2 + c_5x_3x_4, c_3 x_3^2 +  c_4 x_4^2 + c_6 x_1x_2) \subseteq [\mathbb{P}^3 / (\Z/4\Z)],
$$
where a generator $g$ of the $\Z/4\Z$ acts on $\P^3$ by $g\cdot (x_1:x_2:x_3:x_4) = (x_1:-x_2:ix_3:-ix_4)$.

On the other hand, for $\Sigma_2$, we star subdivide with respect to $\rho_9$ and $\rho_{10}$ as $u_{\rho_{9}}+u_{\rho_{10}}= \mathfrak{n}$. The potential reduces to 
$$
W''= (c_1 x_1^2 +c_4 x_4^2+ c_5 x_3x_4)x_9 + (c_2 x_2^2  + c_3 x_3^2 + c_6 x_1x_2)x_{10} 
$$
and this corresponds to the complete intersection 
$$
\mathcal{Z}'' = Z(c_1 x_1^2 +c_4 x_4^2+ c_5 x_3x_4, c_2 x_2^2  + c_3 x_3^2 + c_6 x_1x_2) \subseteq [\mathbb{P}^3 / (\Z/2\Z)],
$$
where the generator $g$ of $\Z/2\Z$ acts on $\P^3$ by $g\cdot (x_1:x_2:x_3:x_4) = (-x_1:-x_2:x_3:x_4)$.

If the $c_i$ are generic, one can check that both $\mathcal{Z}'$ and $\mathcal{Z}''$ are smooth Calabi-Yau orbifolds. Lastly, there is a $\Sigma$ with support $\sigma$ that is a rank 2 vector bundle and one can use $W$ to define a Calabi-Yau complete intersection $[\mathcal{Z}/G]$ in it. By Corollary~\ref{Cor:csrG gives cat resn}, there is a derived equivalence between $[\mathcal{Z}/G]$ and both $\mathcal{Z}'$ and $\mathcal{Z}''$, hence $\dbcoh{\mathcal{Z}'} \cong \dbcoh{\mathcal{Z}''}$. 

Note the ``shuffling'' of monomials that happens between the two potentials $W'$ and $W''$, which happens when different rays are used as the rays corresponding to the bundle coordinates. This can be used to make interesting equivalences between CICY's. In this example, it can be shown that $\mathcal Z'$ and $\mathcal Z''$ are birational, but it is not clear if the derived equivalent Calabi-Yau complete intersections are always birational for higher-dimensional examples.

\subsection{A higher-dimensional generalization of the Libgober-Teitelbaum family}

 In \cite{LT93}, Libgober and Teitelbaum proposed a mirror to a highly symmetric Calabi-Yau complete intersection given by two cubics in $\P^5$. In \cite{Malter}, it was proven to be derived equivalent to the Batyrev-Borisov mirror to two cubics in $\P^5$. In this subsection, we look at the most natural generalization to the Libgober-Teitelbaum family, and show it has a crepant categorical resolution using the Batyrev-Borisov mirror to the complete intersection of two degree $n$ polynomials in $\P^{2n-1}$. We fix $n\in \Z$, $n\ge 2$, throughout the below.

Define two polynomials
\begin{align*}
Q_{1,\lambda}&=x_1^n+x_2^n+\dots+x_n^n-\lambda x_{n+1}\dots x_{2n}, \\
Q_{2,\lambda}&=x_{n+1}^n+x_{n+2}^n+\dots+x_{2n}^n-\lambda x_1\dots x_n.
\end{align*}
Their complete intersection $Z_{\lambda}:=Z(Q_{1,\lambda}, Q_{2,\lambda})\subseteq\P^{2n-1}$ is a smooth Calabi-Yau complete intersection in $n=3$ for $\lambda$ such that $\lambda^6\ne 0,3^6$ and it is a singular complete intersection otherwise. It is also highly symmetric.

Denote by $\zeta_n$ a primitive $n$-th root of unity. Consider $\alpha_i,\beta_i\in \Z\pmod{n}$ ($1\le i\le n-1$) and $\delta\in \Z\pmod{n^2}$ such that
\[
\zeta_{n^2}^\delta=\zeta_n^{\beta_1+\dots+\beta_{n-1}}=\zeta_n^{\alpha_1+\dots+\alpha_{n-1}}.
\]
Consider the following subgroup $G_n$ of $PGL(2n-1,\C)$, given by automorphisms of the form\[
g_{\underline{\alpha},\underline{\beta},\delta}=\left\{\begin{array}{ll}
  \operatorname{diag}\left( \zeta_{n^2}^\delta,\zeta_{n^2}^\delta\zeta_n^{\alpha_1},\dots,\zeta_{n^2}^\delta\zeta_n^{\alpha_{n-1}},\zeta_{n^2}^{-\delta}\zeta_n^{\beta_1},\dots,\zeta_{n^2}^{-\delta}\zeta_n^{\beta_{n-1}},\zeta_{n^2}^{-\delta}\right)  & \text{if } n \text{ odd,} \\
   \operatorname{diag}\left( \zeta_{2n^2}^\delta,\zeta_{2n^2}^\delta\zeta_n^{\alpha_1},\dots,\zeta_{2n^2}^\delta\zeta_n^{\alpha_{n-1}},\zeta_{2n^2}^{-\delta}\zeta_n^{\beta_1},\dots,\zeta_{2n^2}^{-\delta}\zeta_n^{\beta_{n-1}},\zeta_{2n^2}^{-\delta}\right)  & \text{if } n \text{ even.}
\end{array}\right.
\]
One can check that the action of $G_n$ on $\P^{2n-1}$ acts invariantly on the variety $Z_{\lambda}$, hence we can define the orbifold $[\mathcal{Z}_n/G_n]:=Z(Q_{1,n,\lambda},Q_{2,n,\lambda})\subseteq [\P^{2n-1}/G_n]$. For $n=3$, this is the Libgober-Teitelbaum mirror to the complete intersection of two cubics in $\P^5$, which is proven to be derived equivalent to members of the Batyrev-Borisov mirror family in \cite{Malter}. However, for $n\geq 4$, the techniques for proving derived equivalence to the corresponding Batyrev-Borisov construction used in loc. cit. do not work any further, as $[\mathcal{Z}_n/G_n]$ is singular.  Using the results of \textsection~\ref{sec: Cicy}, we will demonstrate instead that the natural result of applying the Batyrev-Borisov construction yields categorical resolutions to the family $[\mathcal{Z}_n/G_n]$.

We start by noting that since $[\mathcal{Z}_n/G_n]$ is a complete intersection of the two $Q_{i,\lambda}$ in $\mathcal{X}_n=[\P^{2n-1}/G_n]$, there is a corresponding gauged LG model with superpotential 
\[
W=u_1Q_{1,\lambda}+u_2Q_{2,\lambda},
\]
This will be a global function for the total space $\tot(\O_{\mathcal{X}_n}(-D_1)\oplus\O_{\mathcal{X}_n}(-D_2))$ of a rank two vector bundle where $Q_{i,\lambda} \in \Gamma(\mathcal{X}_n, \O_{\mathcal{X}_n} (D_i))$. One can construct the toric variety for this vector bundle, which will be $N_{\R}$ where $N=\Z^{2n+1}$. Give this lattice the standard $\Z$-basis $e_1, \dots, e_{2n+1}$. We now will define the rays $\rho_1, \dots, \rho_{2n}, \tau_1, \tau_2$ of the fan $\Sigma_{\mathcal{X}_n}$ by giving its primitive generators. To do so, we first write $\delta_1 := \sum_{i=1}^n e_i$ and $\delta_2 := \sum_{i=n+1}^{2n-1} e_i$. Then the primitive generators for the rays are
\begin{equation}\begin{aligned}\label{first rays for LT}
u_{\rho_i} &= ne_i - \delta_2 + e_{2n+1}, \text{ for $1 \le i \le n$};\\
u_{\rho_i} &= -\delta_1 + n e_i + e_{2n}, \text{ for $n+1 \le i \le 2n-1$};\\
u_{\rho_{2n}} &= -\delta_1 + e_{2n};\\
u_{\tau_1} &= e_{2n}; \\
u_{\tau_2} &= e_{2n+1}.
\end{aligned}\end{equation}
We associate the coordinates $x_1, \dots, x_{2n}$ to $\rho_1, \dots, \rho_{2n}$ and $u_1, u_2$ to $\tau_1, \tau_2$. One can see that $\tot(\O_{\mathcal{X}_n}(-D_1)\oplus\O_{\mathcal{X}_n}(-D_2))$ is the toric variety corresponding to the fan obtained by star subdividing the cone $\operatorname{Cone}(\rho_1, \dots, \rho_{2n}, \tau_1, \tau_2)$ along $\tau_1$ and $\tau_2$.

We now move to the potential $W$. Write $e_i^*\in M$ for the dual basis vector to $e_i$. Define  $\delta_1^* := \sum_{i=1}^n e_i^*$ and $\delta_2^* := \sum_{i=n+1}^{2n-1} e_i^*$.  Next, one can compute that $W = \sum_{i=1}^{2n} x^{m_i} -  \lambda x^{m_{2n+1}} - \lambda x^{m_{2m+2}}$, where 
\begin{equation}\begin{aligned}
m_i &= e_i^* + e_{2n}^*, \text{ for $1 \le i \le n$}, \\
m_i &= e_i^* + e_{2n+1}^*, \text{ for $n+1 \le i \le 2n-1$}, \\
m_{2n} &= (- \textstyle\sum_{i=1}^{2n-1} e_i^*) + e_{2n+1}^*, \\
m_{2n+1} &= e_{2n}^*, \\
m_{2n+2} &= e_{2n+1}^*. 
\end{aligned}\end{equation}
Hence $\Xi_W = \{m_1, \dots, m_{2n+2}\}$ and $\sigma_W = \operatorname{Cone}(\Xi_W)$. The dual cone $\sigma_W^\vee$ is then the cone over the $4n+2$ points $u_{\rho_1}, \dots, u_{\rho_{4n}}, u_{\tau_1}, u_{\tau_2}$, where $u_{\rho_1}, \dots, u_{\rho_{2n}}, u_{\tau_1}, u_{\tau_2}$ are as in~\eqref{first rays for LT} and 
\begin{equation}\begin{aligned}
u_{\rho_{2n+i}} &= - \delta_1 + ne_i + e_{2n}, \text{ for $1 \le i \le n$}, \\
u_{\rho_{3n+i}} &= - \delta_2 + ne_{n+i} + e_{2n+1}, \text{ for $1 \le i \le n-1$}, \\
u_{\rho_{4n}} &= -\delta_2 + e_{2n+1}. 
\end{aligned}\end{equation}
The cone $\sigma_W^\vee$ is completely split reflexive Gorenstein of index 2. 
Thus, Lemma \ref{Lem:sigmaW dual csrG gives Assumption} implies that $\sigma_W^\vee$ fulfills Assumption \ref{conical assumption}. 

Denote by $\Sigma'_{-D_1',-D_2'}$ the fan whose support is $\sigma_W^\vee$ in the Assumption $\ref{conical assumption}$. Write $x_i$ for the variables associated to the new rays $\rho_i$.  The extended superpotential $\bar W$ on the partial compactification takes the form
\begin{align*}
\bar W &=u_1(x_1^nx_{2n+1}^n+\dots+x_n^{n}x_{3n}^n-\lambda x_{n+1}\cdots x_{3n}) \ + \\ 
&\qquad u_2(x_{n+1}^nx_{3n+1}^n+\dots+x_{2n}^nx_{4n}^n-\lambda x_1\cdots x_n\cdot x_{3n+1}\cdots x_{4n}).
\end{align*}
Then one obtains a complete intersection $[\mathcal{Z}'_n / G_n'] \subseteq [U_{\Sigma'} / G_{\Sigma'}]$. 

Lastly, one can verify that the complete intersection $[\mathcal{Z}'_n/G_n']$ as in \eqref{eq: 2ndCI} is smooth when $\lambda^{2n} \ne 0,n^{2n}$, hence Corollary \ref{2 CICYs} implies that we have a categorical resolution of $[\mathcal{Z}_n/G_n]$ via $[\mathcal{Z}'_n/G_n']$.

\begin{remark}
    For $n=2$, the complete intersection $[\mathcal{Z}_2/G_2]\subseteq [\P^{3}/G_2]$ is smooth and Corollary \ref{2 CICYs} yields a derived equivalence between $[\mathcal{Z}_2/G_2]$ and $[\mathcal{Z}'_2/G_2']$.
    Furthermore, the $n=2$ complete intersection $[\mathcal{Z}_2/G_2]\subseteq[\P^3/G_2]$ features in \textsection\ref{exa:diffbund}.
\end{remark}

\subsection{Mirror Constructions}

Take a $\Q$-Fano toric variety corresponding to a polytope $\Delta$. 
In \cite{ACG}, Artebani, Comparin and Guilbot prove that general hypersurfaces associated to a special linear system corresponding to \emph{canonical} subpolytopes $\Delta'$ of the anticanonical polytope $\Delta$ are Calabi-Yau. In particular, given a $\Q$-Fano toric variety with anticanonical polytope $\Delta_2$, one can take a special linear system corresponding to a canonical polytope $\Delta_1$ and consider its corresponding family of Calabi-Yau varieties. Here $(\Delta_1, \Delta_2)$ form a \emph{good pair} if both $\Delta_1$ and $\Delta_2^*$ are canonical. Consequently, $(\Delta^*_2, \Delta^*_1)$ also form a good pair, forming a duality. This generalized both mirror constructions of Batyrev-Borisov and Berglund-H\"ubsch-Krawitz. In the former case, when $\Delta$ is reflexive, then $(\Delta, \Delta)$ is a good pair and one recovers Batyrev duality. There have been recent work by Rossi that generalizes this work and suggests an iterative process of doing this, by introducing what is known as $f$-duality \cite{Rossi20, Rossi23}. In this case, however, the type of singularities that can arise is unclear.

In \cite[Theorem 1.2]{DFK}, the authors proved that given good pairs $(\Delta_1, \Delta_2)$ and $(\Delta_1', \Delta_2)$ members of their dual families were derived equivalent. This is expected, as when one has multiple constructions for the mirror of two symplectomorphic manifolds, these ``multiple'' mirrors must be derived equivalent according to the Homological Mirror Symmetry Conjecture. As $\Delta_1$ and $\Delta_1'$ define two different families of hypersurfaces in the same toric variety, a generalization of Moser's theorem would imply a symplectomorphism.

One expects a similar situation in the complete intersection case. To our knowledge, there is no sufficient criteria for polytopes defining complete intersections that are Calabi-Yau that is weaker than using the standard nef partition of reflexive polytopes for codimension higher than one. Consider nef partitions of reflexive polytopes $\Delta_1 = \Delta_{1,1} + \dots + \Delta_{1,r}$, 
and $\Delta_2 = \Delta_{2,1} + \dots + \Delta_{2,r}$ in $M_{\R}$ where one has the inclusion of  Cayley polytopes 
\begin{align*}
\Delta_{1,1} * \cdots * \Delta_{1,r}  \subseteq \Delta_{2,1} * \dots * \Delta_{2,r}.
\end{align*}
Since $\{\Delta_{i,j}\}_{j=1}^r$ is a nef partition, it corresponds to vector bundles $\mathcal{V} = \bigoplus_{j=1}^r \O_{X_{\Sigma_{\Delta_i}}}(-D_{i,j})$ over the toric varieties $X_{\Sigma_{\Delta_i}}$, where $\Sigma_{\Delta_i}$ is the normal fan to $\Delta_i$ and $D_{i,j}$ is the divisor associated to the polytope $\Delta_{i,j}$. The lattice points in $\Delta_{1,1} * \cdots * \Delta_{1,r}$ correspond to global functions of the vector bundle. Each global function corresponds to a (stacky) complete intersection $\mathcal{Z}_i$. We then have by Corollary~\ref{Cor:csrG gives cat resn} a derived equivalence between $\mathcal{Z}_1$ and $\mathcal{Z}_2$. 

\begin{question}
Is there a combinatorial condition for Cayley products of length $r>1$ that generalizes canonical in $r=1$ where one obtains Calabi-Yau orbifolds? Does some higher codimension version of the mirror construction of Artebani, Comparin and Guilbot hold? 
\end{question}

If so, then, when one can use such a new mirror construction or Batyrev-Borisov, then we expect for there to be a derived equivalence between these new Calabi-Yau mirrors and those constructed by Batyrev-Borisov that can be proven using exoflops.

\bibliography{KellyMalter}

@incollection {BN08,
    AUTHOR = {Batyrev, Victor and Nill, Benjamin},
     TITLE = {Combinatorial aspects of mirror symmetry},
 BOOKTITLE = {Integer points in polyhedra---geometry, number theory,
              representation theory, algebra, optimization, statistics},
    SERIES = {Contemp. Math.},
    VOLUME = {452},
     PAGES = {35--66},
 PUBLISHER = {Amer. Math. Soc., Providence, RI},
      YEAR = {2008},
   MRCLASS = {14J32 (14M25 52B20)},
  MRNUMBER = {2405763},
MRREVIEWER = {Julianna Tymoczko},
       DOI = {10.1090/conm/452/08770},
       URL = {https://doi.org/10.1090/conm/452/08770},
}

@unpublished{AAS,
title= {Complete intersections of unequal degrees},
author = {Nick Addington},
year = {2016},
note= {Homological Mirror Geometry},
URL= {http://www.birs.ca/events/2016/5-day-workshops/16w5062/videos/watch/201603071421-Addington.html},
}

@article {BFK,
    AUTHOR = {Ballard, Matthew and Favero, David and Katzarkov, Ludmil},
     TITLE = {Variation of geometric invariant theory quotients and derived
              categories},
   JOURNAL = {J. Reine Angew. Math.},
  FJOURNAL = {Journal f\"{u}r die Reine und Angewandte Mathematik. [Crelle's
              Journal]},
    VOLUME = {746},
      YEAR = {2019},
     PAGES = {235--303},
      ISSN = {0075-4102},
   MRCLASS = {14F05 (14J33 14L24)},
  MRNUMBER = {3895631},
MRREVIEWER = {Richard P. Thomas},
       DOI = {10.1515/crelle-2015-0096},
       URL = {https://doi.org/10.1515/crelle-2015-0096},
}

@article {HL,
    AUTHOR = {Halpern-Leistner, Daniel},
     TITLE = {The derived category of a {GIT} quotient},
   JOURNAL = {J. Amer. Math. Soc.},
  FJOURNAL = {Journal of the American Mathematical Society},
    VOLUME = {28},
      YEAR = {2015},
    NUMBER = {3},
     PAGES = {871--912},
      ISSN = {0894-0347},
   MRCLASS = {14F05 (14L30)},
  MRNUMBER = {3327537},
MRREVIEWER = {Scott R. Nollet},
       DOI = {10.1090/S0894-0347-2014-00815-8},
       URL = {https://doi.org/10.1090/S0894-0347-2014-00815-8},
}

@article {HS02,
    AUTHOR = {Hausel, Tam\'{a}s and Sturmfels, Bernd},
     TITLE = {Toric hyper{k}\"{a}hler varieties},
   JOURNAL = {Doc. Math.},
  FJOURNAL = {Documenta Mathematica},
    VOLUME = {7},
      YEAR = {2002},
     PAGES = {495--534},
      ISSN = {1431-0635},
   MRCLASS = {53C26 (14M25 53D20)},
  MRNUMBER = {2015052},
MRREVIEWER = {Andrew Dancer},
}

@MISC{stacks,
    AUTHOR = "Authors, The Stacks Project",
    TITLE = "Stacks Project",
    URL = "https://stacks.math.columbia.edu/"
}

@article {AGMexo,
    AUTHOR = {Aspinwall, Paul S. and Greene, Brian R. and Morrison, David
              R.},
     TITLE = {Calabi-{Y}au moduli space, mirror manifolds and spacetime
              topology change in string theory},
   JOURNAL = {Nuclear Phys. B},
  FJOURNAL = {Nuclear Physics. B. Theoretical, Phenomenological, and
              Experimental High Energy Physics. Quantum Field Theory and
              Statistical Systems},
    VOLUME = {416},
      YEAR = {1994},
    NUMBER = {2},
     PAGES = {414--480},
      ISSN = {0550-3213},
   MRCLASS = {32G20 (14J32 32G81 32J81 81T30 81T40)},
  MRNUMBER = {1274435},
MRREVIEWER = {Anatoly Libgober},
       DOI = {10.1016/0550-3213(94)90321-2},
       URL = {https://doi.org/10.1016/0550-3213(94)90321-2},
}

@article {BFKMF,
    AUTHOR = {Ballard, Matthew and Favero, David and Katzarkov, Ludmil},
     TITLE = {A category of kernels for equivariant factorizations and its
              implications for {H}odge theory},
   JOURNAL = {Publ. Math. Inst. Hautes \'{E}tudes Sci.},
  FJOURNAL = {Publications Math\'{e}matiques. Institut de Hautes \'{E}tudes
              Scientifiques},
    VOLUME = {120},
      YEAR = {2014},
     PAGES = {1--111},
      ISSN = {0073-8301},
   MRCLASS = {14F05 (14C30 18E30 18G55)},
  MRNUMBER = {3270588},
MRREVIEWER = {Pawel Sosna},
       DOI = {10.1007/s10240-013-0059-9},
       URL = {https://doi.org/10.1007/s10240-013-0059-9},
}

@article {Aspinwall,
    AUTHOR = {Aspinwall, Paul S.},
     TITLE = {Exoflops in two dimensions},
   JOURNAL = {J. High Energy Phys.},
  FJOURNAL = {Journal of High Energy Physics},
      YEAR = {2015},
    NUMBER = {7},
     PAGES = {104, front matter+19},
      ISSN = {1126-6708},
   MRCLASS = {81T30},
  MRNUMBER = {3383108},
       DOI = {10.1007/JHEP07(2015)104},
       URL = {https://doi.org/10.1007/JHEP07(2015)104},
}

@article {Orlov2,
    AUTHOR = {Orlov, D. O.},
     TITLE = {Triangulated categories of singularities, and equivalences
              between {L}andau-{G}inzburg models},
   JOURNAL = {Mat. Sb.},
  FJOURNAL = {Matematicheski\u{\i} Sbornik},
    VOLUME = {197},
      YEAR = {2006},
    NUMBER = {12},
     PAGES = {117--132},
      ISSN = {0368-8666},
   MRCLASS = {14D21 (14B05 14F05 18E30)},
  MRNUMBER = {2437083},
MRREVIEWER = {Vasile Br\^{i}nz\u{a}nescu},
       DOI = {10.1070/SM2006v197n12ABEH003824},
       URL = {https://doi.org/10.1070/SM2006v197n12ABEH003824},
}

@article {ACG,
    AUTHOR = {Artebani, Michela and Comparin, Paola and Guilbot, Robin},
     TITLE = {Families of {C}alabi-{Y}au hypersurfaces in {$\Bbb{Q}$}-{F}ano
              toric varieties},
   JOURNAL = {J. Math. Pures Appl. (9)},
  FJOURNAL = {Journal de Math\'{e}matiques Pures et Appliqu\'{e}es. Neuvi\`eme S\'{e}rie},
    VOLUME = {106},
      YEAR = {2016},
    NUMBER = {2},
     PAGES = {319--341},
      ISSN = {0021-7824},
   MRCLASS = {14M25 (14J32 14J33)},
  MRNUMBER = {3515305},
MRREVIEWER = {Noriko Yui},
       DOI = {10.1016/j.matpur.2016.02.012},
       URL = {https://doi.org/10.1016/j.matpur.2016.02.012},
}

@article {DFK,
    AUTHOR = {Doran, Charles F. and Favero, David and Kelly, Tyler L.},
     TITLE = {Equivalences of families of stacky toric {C}alabi-{Y}au
              hypersurfaces},
   JOURNAL = {Proc. Amer. Math. Soc.},
  FJOURNAL = {Proceedings of the American Mathematical Society},
    VOLUME = {146},
      YEAR = {2018},
    NUMBER = {11},
     PAGES = {4633--4647},
      ISSN = {0002-9939},
   MRCLASS = {14J33 (14C22 14F05 14J28 14J32 14M25)},
  MRNUMBER = {3856133},
MRREVIEWER = {Alan Matthew Thompson},
       DOI = {10.1090/proc/14154},
       URL = {https://doi.org/10.1090/proc/14154},
}

@article {Orlov04,
    AUTHOR = {Orlov, D. O.},
     TITLE = {Triangulated categories of singularities and {D}-branes in
              {L}andau-{G}inzburg models},
   JOURNAL = {Tr. Mat. Inst. Steklova},
  FJOURNAL = {Trudy Matematicheskogo Instituta Imeni V. A. Steklova},
    VOLUME = {246},
      YEAR = {2004},
    NUMBER = {Algebr. Geom. Metody, Svyazi i Prilozh.},
     PAGES = {240--262},
      ISSN = {0371-9685},
   MRCLASS = {81T40 (14E15 18E30 81T30)},
  MRNUMBER = {2101296},
}

@article {Shipman,
    AUTHOR = {Shipman, Ian},
     TITLE = {A geometric approach to {O}rlov's theorem},
   JOURNAL = {Compos. Math.},
  FJOURNAL = {Compositio Mathematica},
    VOLUME = {148},
      YEAR = {2012},
    NUMBER = {5},
     PAGES = {1365--1389},
      ISSN = {0010-437X},
   MRCLASS = {14F05 (14D23 14J32 14J70 81T30)},
  MRNUMBER = {2982435},
MRREVIEWER = {Antony Maciocia},
       DOI = {10.1112/S0010437X12000255},
       URL = {https://doi.org/10.1112/S0010437X12000255},
}

@article {Isik,
    AUTHOR = {Isik, Mehmet Umut},
     TITLE = {Equivalence of the derived category of a variety with a
              singularity category},
   JOURNAL = {Int. Math. Res. Not. IMRN},
  FJOURNAL = {International Mathematics Research Notices. IMRN},
      YEAR = {2013},
    NUMBER = {12},
     PAGES = {2787--2808},
      ISSN = {1073-7928},
   MRCLASS = {14F05 (14B05)},
  MRNUMBER = {3071664},
MRREVIEWER = {Pawel Sosna},
       DOI = {10.1093/imrn/rns125},
       URL = {https://doi.org/10.1093/imrn/rns125},
}

@incollection {BB1,
    AUTHOR = {Batyrev, Victor V. and Borisov, Lev A.},
     TITLE = {On {C}alabi-{Y}au complete intersections in toric varieties},
 BOOKTITLE = {Higher-dimensional complex varieties ({T}rento, 1994)},
     PAGES = {39--65},
 PUBLISHER = {de Gruyter, Berlin},
      YEAR = {1996},
   MRCLASS = {14J32 (14M25)},
  MRNUMBER = {1463173},
MRREVIEWER = {Mark Gross},
}

@inproceedings {KontsevichICM,
    AUTHOR = {Kontsevich, Maxim},
     TITLE = {Homological algebra of mirror symmetry},
 BOOKTITLE = {Proceedings of the {I}nternational {C}ongress of
              {M}athematicians, {V}ol. 1, 2 ({Z}\"{u}rich, 1994)},
     PAGES = {120--139},
 PUBLISHER = {Birkh\"{a}user, Basel},
      YEAR = {1995},
   MRCLASS = {32J25 (14D07 14J32 18E30 32G05)},
  MRNUMBER = {1403918},
MRREVIEWER = {Claire Voisin},
}

@article {BB2,
    AUTHOR = {Batyrev, Victor V. and Borisov, Lev A.},
     TITLE = {Mirror duality and string-theoretic {H}odge numbers},
   JOURNAL = {Invent. Math.},
  FJOURNAL = {Inventiones Mathematicae},
    VOLUME = {126},
      YEAR = {1996},
    NUMBER = {1},
     PAGES = {183--203},
      ISSN = {0020-9910},
   MRCLASS = {14J32 (14M25 32J18 81T30)},
  MRNUMBER = {1408560},
MRREVIEWER = {Anatoly Libgober},
       DOI = {10.1007/s002220050093},
       URL = {https://doi.org/10.1007/s002220050093},
}

@incollection {BB97,
    AUTHOR = {Batyrev, Victor V. and Borisov, Lev A.},
     TITLE = {Dual cones and mirror symmetry for generalized {C}alabi-{Y}au
              manifolds},
 BOOKTITLE = {Mirror symmetry, {II}},
    SERIES = {AMS/IP Stud. Adv. Math.},
    VOLUME = {1},
     PAGES = {71--86},
 PUBLISHER = {Amer. Math. Soc., Providence, RI},
      YEAR = {1997},
   MRCLASS = {14J32 (14M25)},
  MRNUMBER = {1416334},
MRREVIEWER = {Dimitrios I. Dais},
       DOI = {10.1090/amsip/001/04},
       URL = {https://doi.org/10.1090/amsip/001/04},
}

@book {CLS,
    AUTHOR = {Cox, David A. and Little, John B. and Schenck, Henry K.},
     TITLE = {Toric varieties},
    SERIES = {Graduate Studies in Mathematics},
    VOLUME = {124},
 PUBLISHER = {American Mathematical Society, Providence, RI},
      YEAR = {2011},
     PAGES = {xxiv+841},
      ISBN = {978-0-8218-4819-7},
   MRCLASS = {14M25 (05A15 05E45 52B12)},
  MRNUMBER = {2810322},
MRREVIEWER = {Ivan Arzhantsev},
       DOI = {10.1090/gsm/124},
       URL = {https://doi.org/10.1090/gsm/124},
}

@article {KuzCY,
    AUTHOR = {Kuznetsov, Alexander},
     TITLE = {Calabi-{Y}au and fractional {C}alabi-{Y}au categories},
   JOURNAL = {J. Reine Angew. Math.},
  FJOURNAL = {Journal f\"{u}r die Reine und Angewandte Mathematik. [Crelle's
              Journal]},
    VOLUME = {753},
      YEAR = {2019},
     PAGES = {239--267},
      ISSN = {0075-4102},
   MRCLASS = {14F08 (14A30 14J32 18G80)},
  MRNUMBER = {3987870},
MRREVIEWER = {Clemens Koppensteiner},
       DOI = {10.1515/crelle-2017-0004},
       URL = {https://doi.org/10.1515/crelle-2017-0004},
}

@article {Kuz08,
    AUTHOR = {Kuznetsov, Alexander},
     TITLE = {Lefschetz decompositions and categorical resolutions of
              singularities},
   JOURNAL = {Selecta Math. (N.S.)},
  FJOURNAL = {Selecta Mathematica. New Series},
    VOLUME = {13},
      YEAR = {2008},
    NUMBER = {4},
     PAGES = {661--696},
      ISSN = {1022-1824},
   MRCLASS = {18E30 (14E15)},
  MRNUMBER = {2403307},
MRREVIEWER = {Emanuele Macr\'{\i}},
       DOI = {10.1007/s00029-008-0052-1},
       URL = {https://doi.org/10.1007/s00029-008-0052-1},
}

@article {FK18,
    AUTHOR = {Favero, David and Kelly, Tyler L.},
     TITLE = {Fractional {C}alabi-{Y}au categories from {L}andau-{G}inzburg
              models},
   JOURNAL = {Algebr. Geom.},
  FJOURNAL = {Algebraic Geometry},
    VOLUME = {5},
      YEAR = {2018},
    NUMBER = {5},
     PAGES = {596--649},
   MRCLASS = {14F05 (14M25)},
  MRNUMBER = {3847207},
MRREVIEWER = {Sofia Tirabassi},
       DOI = {10.14231/AG-2018-016},
       URL = {https://doi.org/10.14231/AG-2018-016},
}

@article {Hirano,
    AUTHOR = {Hirano, Yuki},
     TITLE = {Derived {K}n\"{o}rrer periodicity and {O}rlov's theorem for gauged
              {L}andau-{G}inzburg models},
   JOURNAL = {Compos. Math.},
  FJOURNAL = {Compositio Mathematica},
    VOLUME = {153},
      YEAR = {2017},
    NUMBER = {5},
     PAGES = {973--1007},
      ISSN = {0010-437X},
   MRCLASS = {14F05 (14J33 14L24 18E30)},
  MRNUMBER = {3631231},
MRREVIEWER = {Tyler L. Kelly},
       DOI = {10.1112/S0010437X16008344},
       URL = {https://doi.org/10.1112/S0010437X16008344},
}

@article {Malter,
    AUTHOR = {Malter, Aimeric},
     TITLE = {A derived equivalence of the {L}ibgober-{T}eitelbaum and the
              {B}atyrev-{B}orisov mirror constructions},
   JOURNAL = {Int. Math. Res. Not. IMRN},
  FJOURNAL = {International Mathematics Research Notices. IMRN},
      YEAR = {2024},
    NUMBER = {3},
     PAGES = {2099--2137},
      ISSN = {1073-7928},
   MRCLASS = {14J33 (14F08 14L24)},
  MRNUMBER = {4702273},
       DOI = {10.1093/imrn/rnad081},
       URL = {https://doi.org/10.1093/imrn/rnad081},
}

@article {FK14,
    AUTHOR = {Favero, David and Kelly, Tyler L.},
     TITLE = {Proof of a conjecture of {B}atyrev and {N}ill},
   JOURNAL = {Amer. J. Math.},
  FJOURNAL = {American Journal of Mathematics},
    VOLUME = {139},
      YEAR = {2017},
    NUMBER = {6},
     PAGES = {1493--1520},
      ISSN = {0002-9327},
   MRCLASS = {14J33 (14F05 14M25)},
  MRNUMBER = {3730928},
MRREVIEWER = {Arnav Tripathy},
       DOI = {10.1353/ajm.2017.0038},
       URL = {https://doi.org/10.1353/ajm.2017.0038},
}

@article {FK19,
    AUTHOR = {Favero, David and Kelly, Tyler L.},
     TITLE = {Derived categories of {BHK} mirrors},
   JOURNAL = {Adv. Math.},
  FJOURNAL = {Advances in Mathematics},
    VOLUME = {352},
      YEAR = {2019},
     PAGES = {943--980},
      ISSN = {0001-8708},
   MRCLASS = {14J33 (14F05 81T30 81T40)},
  MRNUMBER = {3975030},
       DOI = {10.1016/j.aim.2019.06.013},
       URL = {https://doi.org/10.1016/j.aim.2019.06.013},
}

@article {LT93,
    AUTHOR = {Libgober, A. and Teitelbaum, J.},
     TITLE = {Lines on {C}alabi-{Y}au complete intersections, mirror
              symmetry, and {P}icard-{F}uchs equations},
   JOURNAL = {Internat. Math. Res. Notices},
  FJOURNAL = {International Mathematics Research Notices},
      YEAR = {1993},
    NUMBER = {1},
     PAGES = {29--39},
      ISSN = {1073-7928},
   MRCLASS = {14C30 (14J30 14M10 14N10 32G20)},
  MRNUMBER = {1201748},
MRREVIEWER = {Claire Voisin},
       DOI = {10.1155/S1073792893000030},
       URL = {https://doi.org/10.1155/S1073792893000030},
}

@article {Rossi20,
    AUTHOR = {Rossi, Michele},
     TITLE = {An extension of polar duality of toric varieties and its
              consequences in mirror symmetry},
   JOURNAL = {Adv. Theor. Math. Phys.},
  FJOURNAL = {Advances in Theoretical and Mathematical Physics},
    VOLUME = {26},
      YEAR = {2022},
    NUMBER = {5},
     PAGES = {1449--1541},
      ISSN = {1095-0761},
   MRCLASS = {14J33 (14M25)},
  MRNUMBER = {4571573},
MRREVIEWER = {Paola Comparin},
}

@misc{Rossi23,
      title={Non-calibrated framed processes, derived equivalence and Homological Mirror Symmetry}, 
      author={Michele Rossi},
      year={2023},
      eprint={2304.01856},
      archivePrefix={arXiv},
      primaryClass={math.AG},
      url={https://arxiv.org/abs/2304.01856}, 
}
\bibliographystyle{amsalpha}
\end{document}